\documentclass[11pt]{article}
\usepackage[T1]{fontenc}
\usepackage{amsfonts}
\usepackage{amsmath}
\usepackage{amssymb}
\usepackage{amsthm}
\usepackage{bbm}
\usepackage{bm}
\usepackage{mathrsfs}
\usepackage{verbatim}
\usepackage{setspace}
\usepackage{color}
\usepackage{pdfsync}
\usepackage{enumitem}
\usepackage{graphicx}
\usepackage{subfigure}
\usepackage{tikz}
\usetikzlibrary{automata, positioning}

\theoremstyle{plain}
\newtheorem{theorem}{Theorem}[section]
\newtheorem{proposition}[theorem]{Proposition}
\newtheorem{lemma}[theorem]{Lemma}
\newtheorem{corollary}[theorem]{Corollary}

\theoremstyle{definition}
\newtheorem{definition}[theorem]{Definition}
\newtheorem{remark}[theorem]{Remark}

\newtheorem{example}[theorem]{Example}

\newtheorem{assumption}[theorem]{Assumption}

\theoremstyle{remark}

\renewenvironment{thebibliography}[1]{%
\begin{oldthebibliography}{#1}%
\setlength{\baselineskip}{1em}
\linespread{.3}
\small
\setlength{\parskip}{0.9ex}%
\setlength{\itemsep}{.3em}%
}%
{%
\end{oldthebibliography}%
}
\newcommand{\eps}{\varepsilon}

\newcommand{\N}{\mathbb{N}}

\newcommand{\R}{\mathbb{R}}

\newcommand{\X}{\mathsf{X}}
\newcommand{\Y}{\mathsf{Y}}

\newcommand{\cL}{\mathcal{L}}

\newcommand{\fT}{\mathfrak{T}}

\DeclareMathOperator{\dom}{dom}

\DeclareMathOperator{\graph}{graph}
\DeclareMathOperator{\proj}{proj}

\DeclareMathOperator{\spt}{spt}
\DeclareMathOperator{\Unif}{Unif}
\DeclareMathOperator{\Int}{int}

\DeclareMathOperator{\id}{Id}

\DeclareMathOperator*{\argmin}{arg\, min}

\DeclareMathOperator{\conv}{conv}

\newcommand{\1}{\mathbf{1}}

\newcommand{\br}[1]{\langle #1 \rangle}

\newcommand{\qmbox}[1]{\quad\mbox{#1}\quad}
\newcommand{\qforallq}{\quad\mbox{for all}\quad}
\newcommand{\qforq}{\quad\mbox{for}\quad}
\newcommand{\qwhereq}{\quad\mbox{for}\quad}

\newcommand{\tsum}{{\textstyle \sum}}

\newcommand{\mykill}[1]{}
\numberwithin{equation}{section}
\usepackage[pdfborder={0 0 0}]{hyperref}
\hypersetup{
  urlcolor = black,
  pdfauthor = {Espen Bernton, Promit Ghosal, Marcel Nutz},
  pdfkeywords = {Optimal Transport; Entropic Penalization; Large Deviations; Cyclical Invariance},
  pdftitle = {Entropic Optimal Transport: Geometry and Large Deviations},
  pdfsubject = {Entropic Optimal Transport: Geometry and Large Deviations},
  pdfpagemode = UseNone
}
\addtolength{\topmargin}{-.5cm}
\addtolength{\textheight}{1.00cm}

\begin{document}

\title{\vspace{-1em}
 Entropic Optimal Transport:\\Geometry and Large Deviations%
 \thanks{
 The authors are grateful to Julio Backhoff-Veraguas, Guillaume Carlier, Wilfried Gangbo and Jon Niles-Weed for insightful discussions that greatly contributed to this research.}
 }%
\date{\today}
\author{
  Espen Bernton%
  \thanks{
  Department of Statistics, Columbia University, eb3311@columbia.edu.
  }
  \and
  Promit Ghosal%
  \thanks{Department of Mathematics, Massachusetts Institute of Technology, promit@mit.edu.
  }
  \and
  Marcel Nutz%
  \thanks{
  Departments of Statistics and Mathematics, Columbia University, mnutz@columbia.edu. Research supported by an Alfred P.\ Sloan Fellowship and NSF Grants DMS-1812661, DMS-2106056.}
  }
\maketitle \vspace{-1.2em}

\begin{abstract}
  We study the convergence of entropically regularized optimal transport to optimal transport. The main result is concerned with the convergence of the associated optimizers and takes the form of a large deviations principle quantifying the local exponential convergence rate as the regularization parameter vanishes. The exact rate function is determined in a general setting and linked to the Kantorovich potential of optimal transport. Our arguments are based on the geometry of the optimizers and inspired by the use of $c$-cyclical monotonicity in classical transport theory. The results can also be phrased in terms of Schr\"odinger bridges.
\end{abstract}

\vspace{1em}

{\small
\noindent \emph{Keywords} Optimal Transport; Entropic Penalization; Schr\"odinger Bridge; Large Deviations; Cyclical Invariance

\noindent \emph{AMS 2010 Subject Classification}
90C25; %
60F10; %
49N05 %
}
\vspace{1em}

\section{Introduction}\label{se:intro}

Over the last three decades, optimal transport theory has flourished due to its connections with geometry, analysis, probability theory, and other fields in mathematics; see for instance \cite{RachevRuschendorf.98a, RachevRuschendorf.98b, Villani.09}. Following computational advances which have enabled high-dimensional applications, a renewed interest comes from applied fields such as machine learning, image processing and statistics. Popularized in this area by Cuturi~\cite{Cuturi.13}, entropic regularization is a key computational approach for high-dimensional problems. The resulting \emph{entropic optimal transport} problem provides an approximate optimal transport when solved for small regularization parameter~$\eps>0$ while admitting much more efficient  algorithms than the unregularized problem, in addition to having other desirable properties. We defer the discussion of related literature to Section~\ref{se:literature} below and proceed with a synopsis of the present study.

Given a continuous cost function $c: \X\times\Y\to\R_{+}$ on Polish probability spaces $(\X,\mu)$ and $(\Y,\nu)$, we consider the entropic optimal transport problem
\begin{equation}\label{eq:EOTintro}
  \inf_{\pi\in\Pi(\mu,\nu)} \int_{\X\times\Y} c \,d\pi + \eps H(\pi|\mu\otimes\nu)
\end{equation}
where $\Pi(\mu,\nu)$ is the set of couplings and $H(\cdot|\mu\otimes\nu)$ denotes relative entropy (or Kullback--Leibler divergence) with respect to the product of the marginals; see Section~\ref{se:cyclInvar} for the formal definitions. The constant $\eps>0$ acts as a regularization parameter; $\eps=0$ recovers the (unregularized) optimal transport problem.
Under mild conditions detailed in Sections~\ref{se:cyclInvar} and~\ref{se:clusterPoints}, respectively, the entropic optimal transport problem admits a unique solution $\pi_{\eps}\in\Pi(\mu,\nu)$ and $\pi_{\eps}$ converges weakly to a solution~$\pi_{*}$ of the unregularized problem. Our main interest is to quantify the \emph{speed} of this convergence $\pi_{\eps}\to\pi_{*}$.

For finite-dimensional linear programs---including optimal transport problems with marginals supported by finite sets---the solution of the entropic regularization is known to converge exponentially fast to a solution of the original problem (in total variation, say). In transport problems with continuous marginals, the situation is quite different even in the most regular examples. For Gaussian marginals on~$\R$ and quadratic costs $c(x,y)=|x-y|^{2}$, direct computation shows that~$\pi_{\eps}$ is Gaussian and $\pi_{*}$ is given by a linear transport (Monge) map~$T$. One finds that the transport cost converges only linearly, $\int c \,d\pi_{\eps}-\int c \,d\pi_{*}=\eps/2+o(\eps)$. The culprit for this slowdown is easily spotted by inspecting the closed-form solution:
the leading term in the cost difference stems from the mass $\pi_{\eps}$ places at a distance of approximately $\sqrt{\eps}$ to the support~$\Gamma$ of~$\pi_{*}$ (that is, the graph of~$T$). See Section~\ref{se:literature} for further discussion on the asymptotics of transport costs and value functions, which have been the main focus of the extant literature on the convergence as $\eps\to0$.

In the present study, we adopt a different, more \emph{local} perspective, from which the Gaussian example is actually encouraging: the density of $\pi_{\eps}$ decays \emph{exponentially} away from~$\Gamma$. Indeed, it is proportional to $e^{-\alpha|y-T(x)|^{2}/\eps}$, where $\alpha>0$ is the quotient of the marginal variances. 

The main result of this paper is a comparable statement in a remarkably general setting; it takes the form of a large deviations principle. We define a function~$I(x,y)$ through the following optimization. In addition to the given point $(x,y)=:(x_{1},y_{1})$, choose finitely many points $(x_{2},y_{2}),\dots, (x_{k},y_{k})$ from the support~$\Gamma$ of the limiting optimal transport $\pi_{*}$, as well as a permutation $\sigma\in\Sigma(k)$. Then, consider the difference
\begin{equation}\label{eq:defIdifference}
  \sum_{i=1}^{k} c(x_{i},y_{i}) - \sum_{i=1}^{k}c(x_{i},y_{\sigma(i)})
\end{equation}
between the pointwise transport costs from~$x_{i}$ to~$y_{i}$ with the costs for the permuted destinations~$y_{\sigma(i)}$. The optimization is to maximize this difference, and we define $I(x,y)$ as the supremum value of~\eqref{eq:defIdifference} over all choices of points and permutations. For $(x,y)\in\Gamma$, the optimality of $\pi_{*}$ implies that~$I(x,y)=0$, because $\Gamma$ is $c$-cyclically monotone. But outside~$\Gamma$, we may typically expect that~$I(x,y)>0$. Part~(a) of our theorem below, the large deviations upper bound, shows that~$I$ is a lower bound for the rate function in the general Polish setting. The matching bound~(b) necessitates a condition on the optimal transport problem that is being approximated---but still holds for the majority of continuous or semi-discrete transport problems of interest. We mainly discuss the uniqueness of Kantorovich potentials (Assumption~\ref{as:uniquePotential}) as a sufficient condition; it also gives rise to an insightful representation of~$I$ as $I(x,y)=c(x,y)- \psi^{c}(y)+\psi(x)$, the difference between the cost~$c(x,y)$ and the solution of the dual optimal transport problem (see Proposition~\ref{pr:dualIndep}).  An alternative condition imposing regularity of the optimal transport (Assumption~\ref{as:crossContinuity}) is also considered. Tacitly assuming the existence of $\pi_{\eps}$ and its weak limit (cf.\ Sections~\ref{se:cyclInvar} and~\ref{se:clusterPoints}), the main result reads as follows.

\begin{theorem}\label{th:mainIntro}
  Let $\Gamma=\spt\pi_{*}$ where $\pi_{*}=\lim_{\eps\to0} \pi_{\eps}$ is the limiting optimal transport
  and define $I: \X\times\Y\to[0,\infty]$ by~\eqref{eq:defI}.
  \begin{itemize}
  \item[(a)] For any compact set $C\subset\X\times\Y$,
  $$
    \limsup_{\eps\to0} \eps \log \pi_{\eps}(C) \leq -\inf_{(x,y)\in C} I(x,y).
  $$ 
  \item[(b)] Let Assumption~\ref{as:uniquePotential} or Assumption~\ref{as:crossContinuity} hold, and consider the sets $\X_{0}=\proj_{\X}\Gamma$ and $\Y_{0}=\proj_{\Y}\Gamma$ of full marginal measure. For any open set $U\subset \X_{0}\times\Y_{0}$,
  $$
    \liminf_{\eps\to0} \eps \log \pi_{\eps}(U) \geq -\inf_{(x,y)\in U} I(x,y).
  $$ 
  \end{itemize} 
\end{theorem}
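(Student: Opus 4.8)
My plan is to derive both bounds from the \emph{cyclical invariance} of the entropic optimizer established in Section~\ref{se:cyclInvar}. Writing $p_\eps := d\pi_\eps/d(\mu\otimes\nu)$, this invariance is the pointwise identity
$$
  \prod_{i=1}^{k} p_\eps(x_i,y_i) \;=\; e^{-\Delta/\eps}\prod_{i=1}^{k} p_\eps(x_i,y_{\sigma(i)}), \qquad \Delta := \sum_{i=1}^{k} c(x_i,y_i) - \sum_{i=1}^{k} c(x_i,y_{\sigma(i)}),
$$
valid for all points $(x_i,y_i)_{i=1}^{k}$ and all $\sigma\in\Sigma(k)$ (the Schr\"odinger potentials cancel since $\sigma$ only permutes the $y$--coordinates), and moreover $(x_i,y_i)_i \mapsto \prod_i p_\eps(x_i,y_{\sigma(i)})$ is a probability density with respect to $(\mu\otimes\nu)^{\otimes k}$. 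The other ingredients are the continuity of $c$ (so that replacing the $x_i,y_i$ in $\Delta$ by points within distance $\delta$ changes $\Delta$ by a generic error $\rho(\delta)\to0$) and the weak convergence $\pi_\eps\to\pi_*$, which by the portmanteau theorem gives $\liminf_{\eps\to0}\pi_\eps(V)\ge\pi_*(V)>0$ for every open $V$ meeting $\Gamma=\spt\pi_*$. Throughout I work with product neighborhoods $B_\delta(x)\times B_\delta(y)$, which are cofinal with balls, so the statements for compact and open sets follow.

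For part (a), fix $(x,y)=:(x_1,y_1)$; the case $I(x,y)=0$ is trivial since $\pi_\eps\le1$, so assume $I(x,y)>0$ and, given $\eta>0$, choose $(x_2,y_2),\dots,(x_k,y_k)\in\Gamma$ and $\sigma\in\Sigma(k)$ with $\Delta\ge I(x,y)-\eta$ in~\eqref{eq:defIdifference}. Integrating the identity above over $\prod_{i=1}^{k}\bigl(B_\delta(x_i)\times B_\delta(y_i)\bigr)$ and using that $\pi_\eps^{\otimes k}$ is a product measure while $\prod_i p_\eps(x_i,y_{\sigma(i)})$ integrates to $1$ gives
$$
  \pi_\eps\bigl(B_\delta(x)\times B_\delta(y)\bigr)\,\prod_{i=2}^{k}\pi_\eps\bigl(B_\delta(x_i)\times B_\delta(y_i)\bigr) \;\le\; e^{-(I(x,y)-\eta-\rho(\delta))/\eps}.
$$
The factors with $i\ge2$ are bounded below by a positive constant for small $\eps$ (portmanteau at the $\Gamma$--points), so taking $\eps\log$, then $\delta\to0$, then $\eta\to0$ yields $\lim_{\delta\to0}\limsup_{\eps\to0}\eps\log\pi_\eps(B_\delta(x)\times B_\delta(y))\le -I(x,y)$. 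A finite subcover of the compact set $C$ and subadditivity of $\pi_\eps$ then give $\limsup_{\eps\to0}\eps\log\pi_\eps(C)\le-\inf_{C}I$.

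For part (b) under Assumption~\ref{as:uniquePotential}, Proposition~\ref{pr:dualIndep} gives $I(u,v)=c(u,v)-\psi^{c}(v)+\psi(u)$, with $\psi^{c}(v)=c(u,v)+\psi(u)$ whenever $(u,v)\in\Gamma$. Fix $(x,y)\in\X_0\times\Y_0$; since $x\in\proj_\X\Gamma$ and $y\in\proj_\Y\Gamma$ we may pick $y_\Gamma,x_\Gamma$ with $(x,y_\Gamma),(x_\Gamma,y)\in\Gamma$. Applying the cyclical identity with $k=2$ to the ``straight'' pairs $(x,y_\Gamma),(x_\Gamma,y)$ and the ``swapped'' pairs $(x,y),(x_\Gamma,y_\Gamma)$ and integrating over the corresponding product neighborhoods produces, for small $\eps$,
$$
  \pi_\eps\bigl(B_\delta(x)\times B_\delta(y)\bigr)\,\pi_\eps\bigl(B_\delta(x_\Gamma)\times B_\delta(y_\Gamma)\bigr) \;\ge\; e^{(\Delta_0-\rho(\delta))/\eps}\,\pi_\eps\bigl(B_\delta(x)\times B_\delta(y_\Gamma)\bigr)\,\pi_\eps\bigl(B_\delta(x_\Gamma)\times B_\delta(y)\bigr),
$$
with $\Delta_0 = c(x,y_\Gamma)+c(x_\Gamma,y)-c(x,y)-c(x_\Gamma,y_\Gamma)$. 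The two factors on the right are bounded below by a positive constant (portmanteau at the $\Gamma$--points $(x,y_\Gamma)$ and $(x_\Gamma,y)$), while the cross--term $\pi_\eps(B_\delta(x_\Gamma)\times B_\delta(y_\Gamma))$ on the left is controlled \emph{from above} by the ball estimate extracted in the proof of part (a), namely $\limsup_{\eps\to0}\eps\log\pi_\eps(B_\delta(x_\Gamma)\times B_\delta(y_\Gamma))\le -I(x_\Gamma,y_\Gamma)+\rho(\delta)$. Taking $\eps\log$ then gives $\liminf_{\eps\to0}\eps\log\pi_\eps(B_\delta(x)\times B_\delta(y))\ge\Delta_0+I(x_\Gamma,y_\Gamma)-2\rho(\delta)$, and inserting the dual formula for $I$ at $(x_\Gamma,y_\Gamma)$ together with $\psi^{c}(y_\Gamma)=c(x,y_\Gamma)+\psi(x)$ and $\psi^{c}(y)=c(x_\Gamma,y)+\psi(x_\Gamma)$ collapses $\Delta_0+I(x_\Gamma,y_\Gamma)$ to exactly $-I(x,y)$. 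Letting $\delta\to0$ and taking the supremum over $(x,y)\in U$ proves the bound. Under Assumption~\ref{as:crossContinuity} one runs the same scheme, verifying $\Delta_0+I(x_\Gamma,y_\Gamma)=-I(x,y)$ directly from the definition of $I$ and the regularity of~$\Gamma$ in place of the dual formula.

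The main obstacle is this interplay in part (b): the leftover cross point $(x_\Gamma,y_\Gamma)$ need not lie on $\Gamma$, and the deficit $\Delta_0\le-I(x,y)$ can be strict, so the lower estimate closes only because that point carries precisely the exponential decay $e^{I(x_\Gamma,y_\Gamma)/\eps}$ needed to compensate---which is why part (a) must be fed back into (b) there, and why one of the structural hypotheses is needed to identify the resulting quantity with $-I(x,y)$. The remaining points are bookkeeping: the limits must be taken in the order $\eps\to0$, $\delta\to0$, $\eta\to0$ (freeze $\eta$, hence the finite configuration of points, before shrinking $\delta$), the continuity of $c$ is invoked only at finitely many fixed points, and in a Polish space that is not locally compact one uses the explicit ball estimate from the proof of (a) rather than citing (a) for compact sets.
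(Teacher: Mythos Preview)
Your proposal is correct and follows essentially the same route as the paper: part~(a) matches Lemma~\ref{le:expBound}/Lemma~\ref{le:expBoundWithSptPts}/Corollary~\ref{co:limsup}, and part~(b) matches Corollary~\ref{co:liminf} (the $k=2$ swap with cross point $(x_\Gamma,y_\Gamma)$, feeding the upper bound from~(a) back in, and closing via~\eqref{eq:Isum}). The one place you gloss over substance is the identity $\Delta_0+I(x_\Gamma,y_\Gamma)=-I(x,y)$ under Assumption~\ref{as:crossContinuity}: this is the nontrivial Lemma~\ref{le:sumI2}, proved by interpolating points along a rectifiable curve in~$\X_0$ so that the cross terms telescope to~$o(1)$ via~\eqref{eq:crossContinuity}---but the paper itself defers those details in Corollary~\ref{co:liminf2}, so your sketch is at the same level.
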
 

The theorem shows in particular that the rate depends (only) on the geometry of $\pi_{*}$, which does not seem to be clear a priori. We mention that our result can also be stated in terms of (static) Schr\"odinger bridges. In this context, it is a large deviations principle for the small-noise (or small-time) limit; cf.\ Section~\ref{se:literature}.

For finitely supported marginals, the density of $\pi_{\eps}$ converges exponentially for any cost function; that is, the rate function is strictly positive outside~$\Gamma$. We shall see that the analogue may fail in the continuous case. Rather, positivity depends on the geometry of the cost. The twist condition (injectivity of $\nabla_{x}c(x,\cdot)$) plays an important role, like in many results on optimal transport. We include affirmative positivity results in particular for quadratic costs, which is the most important case for applications. While not pursued in the present paper, our results should also be useful to derive detailed quantitative bounds on the rate in more specific settings. We may also hope to gain insights into how the rate depends on the dimension.

Geometry is a cornerstone in the now-classical theory of optimal transport, where optimality is captured geometrically by the $c$-cyclical monotonicity of a transport's support. Defined by comparing costs at finitely many points, it yields a powerful tool to derive fundamental results such as stability of optimal transports under weak limits or existence of dual potentials. 
We are not aware of a comparable technique in the literature on entropic optimal transport (or on Schr\"odinger bridges).
In this paper, we exploit a cyclical invariance property satisfied by the density of~$\pi_{\eps}$. The invariance itself can be understood as a reformulation of a classical characterization for~$\pi_{\eps}$ through the solution of the dual problem, the Schr\"odinger potentials. The novelty here lies in exploiting the geometric aspect and working on the primal side, following the spirit of $c$-cyclical monotonicity. Like in classical optimal transport, the arguments are remarkably simple and general once the correct notions are in place. Our technique is a departure from the control-theoretic methods in the related literature.
Case in point, the geometric proof that a weak limit $\pi=\lim_{\eps\to0}\pi_{\eps}$ is an optimal transport (cf.~Proposition~\ref{pr:limitsCyclMonotone}), is nearly trivial compared to the Gamma-convergence technique, even in the general Polish context. (Of course, Gamma-convergence is applicable to many other problems where our technique has no analogue.)

We also emphasize another benefit which may illustrate that cyclical invariance is in fact more than just a reformulation of control theory or  convex analysis: the geometry singles out a unique coupling~$\pi_{\eps}$ even if the value function~\eqref{eq:EOTintro} is infinite and hence the usual notion of solution as a minimizer is not meaningful. This is crucial for instance if costs are quadratic but one of the marginal distributions does not have a finite second moment. Our arguments for the large deviations result apply in that setting without any added difficulty, paralleling the geometric insights in classical optimal transport. (On the other hand, the \emph{existence} of~$\pi_{\eps}$ in the case of infinite value functions is not immediate. We establish it in~\cite{GhosalNutzBernton.21b}, together with a stability theorem for~$\pi_{\eps}$, using the same geometric standpoint.) Indeed, we expect the technique to be useful in several other aspects of entropic optimal transport and Schr\"odinger bridges, and thus the technique may be as important a contribution as the main theorem.

The present paper is organized as follows. After reviewing motivations for our research and related literature in the remainder of this Introduction, Section~\ref{se:cyclInvar} details the basic definitions and introduces cyclical invariance. In Section~\ref{se:clusterPoints}, this notion is used to prove that cluster points of~$\pi_{\eps}$ as $\eps\to0$ have $c$-cyclically monotone support, hence are optimal transports. The main result on large deviations is obtained in Section~\ref{se:largeDeviations}: part~(a) of Theorem~\ref{th:mainIntro} is stated as Corollary~\ref{co:limsup} whereas~(b) is split into  Corollaries~\ref{co:liminf} and~\ref{co:liminf2}, each covering one of the two alternative assumptions. %
Section~\ref{se:positivity} gives examples of settings where the rate function~$I$ is strictly positive outside the support~$\Gamma$, with a focus on quadratic costs. Appendix~\ref{se:factorization} contains  facts about Schr\"odinger bridges and a derivation of the cyclical invariance property. In Appendix~\ref{se:uniquenessOfPotentials}, we detail two general settings where Assumption~\ref{as:uniquePotential} on the uniqueness of Kantorovich potentials is satisfied. Finally, Appendix~\ref{se:Loeper} shows how to translate the results on the positivity of~$I$ in Section~\ref{se:positivity} from quadratic costs to more general cost functions by means of $c$-convex analysis.

\subsection{Related Literature}\label{se:literature} 

In the literature on finite-dimensional linear programs and their entropic regularization, the early work~\cite{CominettiSanMartin.94} contains a very detailed study of primal and dual convergence, expansion of the value function, and characterizations of the rates. Their setting includes discrete optimal transport problems with marginals supported by finitely many points, and in that case the pointwise results in~\cite{CominettiSanMartin.94} certainly include the large deviations result for~$\eps\to0$. On the other hand, our main theorem is most relevant when at least one marginal support is connected, hence is complementary to the discrete case. More recently, \cite{Weed.18} proved an exponential convergence bound for finite-dimensional linear programs. While the bound is not sharp in a pointwise sense, the result is non-asymptotic; i.e., holds for all~$\eps$ below a known threshold. Moreover, the constants are  known in terms of the data, which provided valuable intuition for our construction of  the rate function~$I$. One may also observe how the constants in~\cite{Weed.18} blow up as the cardinality of the support increases.

In the last decade, optimal transport has found myriad applications in machine learning, statistics, image processing, language processing, and other areas. The literature in the computational area has expanded very quickly and our account is highly incomplete; see~\cite{PeyreCuturi.19} for a recent monograph with extensive references. Exact computation of an optimal transport between marginals with $n$~atoms costs $O(n^3\log n)$,
prohibitive for modern applications with large data sets. The recent success of applied optimal transport is enabled by the advent of fast approximate solvers, and entropic regularization is among the most influential schemes for high-dimensional problems. Popularized by Cuturi~\cite{Cuturi.13} in this domain, it allows for the application of Sinkhorn's algorithm (also called iterative proportional fitting, and also due to Deming, Stephan, Fortet, Knopp and others) where each iteration is a matrix-vector multiplication costing $O(n^2)$. Importantly for modern applications, it is highly parallelizable on GPUs; a number of further advantages are highlighted in~\cite{BenamouCarlierCuturiNennaPeyre.15}. The convergence of this algorithm was rigorously discussed in~\cite{IrelandKullback.68,Ruschendorf.95}, among others. More recently, it was shown that $\delta$-accurate approximations of the transport cost can be  obtained in $\tilde{O}(n^2 /\delta)$ operations via entropic regularization; cf.~\cite{BlanchetJambulapatiKentSidford.18,LinHoJordan.19} and the references therein. %
In addition to computation accuracy, a second error in practice stems from sampling the marginals. For entropic optimal transport (with $\eps>0$ fixed), the rate of convergence of the empirical cost towards its population limit does not depend on the dimension, in contrast to the curse of dimensionality suffered by its unregularized counterpart~\cite{GenevayPeyreCuturi.18,MenaWeed.19}.
Addressing the combined problem, \cite{Berman.20} studies the convergence of the discrete Sinkhorn algorithm to an optimal transport potential in the joint limit when $\eps_{n}\to0$ and the marginals $\mu,\nu$ are approximated by discretizations $\mu_{n},\nu_{n}$ satisfying a certain density property. Explicit error bounds are derived, for instance for quadratic cost on the torus, yielding important insights into the optimal trade-off between~$n$ and~$\eps$.
In the present study, we focus on the discrepancy between the entropic optimizer~$\pi_{\eps}$ and the optimal transport~$\pi_{*}$ in a general setting and adopt a  local point of view.

Continuing with a different branch of related literature, recall that entropic optimal transport can also be phrased as the (static) Schr\"odinger bridge problem. Informally stated, consider a system of diffusing particles from time $t_{0}$ to $t_{1}$ in thermal equilibrium, and a given joint ``reference'' law~$R$ for its configuration at those times. If marginals $(\mu,\nu)$ differing from the ones of~$R$ are observed, what is the most likely evolution (joint law of~$\mu,\nu$) of the system conditional on~$R$? Schr\"odinger's answer amounts to $\pi^{*}=\argmin_{\Pi(\mu,\nu)} H(\cdot|R)$; see \cite{Follmer.88, Leonard.14} for extensive surveys including historical accounts. (This is the static formulation. Given the origins in physics, it is natural that much of the literature focuses on the \emph{dynamic} Schr\"odinger bridge problem, which asks for the dynamic evolution of the particle system over time $t\in[t_{0},t_{1}]$. The static problem is recovered by projecting to the marginals.)

The minimization of $H(\cdot|R)$ over $\Pi(\mu,\nu)$ coincides with the entropic optimal transport problem~\eqref{eq:EOTintro} if we introduce the cost function $c:=-\eps \log (\alpha^{-1} dR/d(\mu\otimes\nu))$, where the parameter $\eps>0$ is arbitrary and $\alpha$ is a normalizing constant (we tacitly assume that $R\sim \mu\otimes\nu$). Conversely, taking \eqref{eq:EOTintro} as the starting point, defining $R(\eps)$ by~$dR(\eps)/d(\mu\otimes\nu)=\alpha e^{-c/\eps}$ yields the associated Schr\"odinger bridge problem. Assuming for simplicity that $\{c=0\}$ is the graph of a function~$f:\X\to\Y$, Theorem~\ref{th:mainIntro} is then a large deviations principle as the reference measure~$R(\eps)$ degenerates to a deterministic coupling (meaning that a particle with given origin~$x$ travels to the predetermined destination~$f(x)$).\footnote{Schr\"odinger's ideas about the ``most likely evolution'' are usually presented as a large deviations result in the modern literature. That result is very different from the one just discussed.} This is also called the small-noise or small-time limit. While not pursued here, it seems plausible that a similar principle could be established for more general sequences~$R(\eps)$. 
From the point of view of Schr\"odinger bridges, another interesting follow-up question is whether a comparable large deviations result can be stated for the dynamic problem on path space.

Mikami~\cite{Mikami.02, Mikami.04} first highlighted the connection between Schr\"odinger equations and optimal transport in the small-noise limit; see also~\cite{ChenGeorgiouPavon.16} for a connection through a fluid dynamic formulation.
L\'eonard studied Schr\"odinger bridges in a series of works starting with~\cite{Leonard.01a,Leonard.01b}; see~\cite{Leonard.14} for further references. In~\cite{Leonard.12}, he established convergence of the value function to an optimal transport problem in the sense of Gamma-convergence for a general formulation of the problem. See also~\cite{CarlierDuvalPeyreSchmitzer.17} where a very accessible proof of the Gamma-convergence is presented for quadratic costs.
More recently, \cite{ConfortiTamanini.19,Pal.19} study the limit in specific settings and determine higher-order terms in the expansion of the Schr\"odinger (or entropic) value function around the optimal transport cost. These works complement earlier results of \cite{AdamsDirrPeletierZimmer.11,DuongLaschosRenger.13,ErbarMaasRenger.15} showing that the large deviation rate function for the empirical distribution of independent Brownian particles with drift is asymptotically equivalent to the Jordan--Kinderlehrer--Otto functional arising in the Wasserstein gradient flow. %
We mention that~\cite{ConfortiTamanini.19} also considers the large-time limit (corresponding to $\eps\to\infty$); cf.~\cite{ClercConfortiGentil.20} for recent developments. The setup in \cite{Pal.19} is closest to ours in that the entropic penalty and the limit~$\eps\to0$ are formulated in the same way, whereas the literature on Schr\"odinger bridges often formulates the zero-noise limit through a vanishing Laplacian. We also mention~\cite{GigliTamanini.21}, establishing convergence of the dual potentials for compact marginals (see~\cite{NutzWiesel.21} for a follow-up and more on the relation to the present work).

While the focus of the aforementioned works is on value functions and global quantities, the present study focuses on the local geometry and convergence. The value functions are not used at all, and so it is quite natural that the results hold even when costs are infinite. We are not aware of a large deviations principle similar to ours in the extant literature. 
One concrete example where these aspects are of interest, are the multidimensional ranks and quantiles that have been introduced in statistics to extend the usual scalar notions and familiar nonparametric tests; see~\cite{ChernozhukovEtAl.17,DebSen.19,delBarrioEtAl.18,GhosalSen.19}.
Here Brenier's map is fundamental, but like in the scalar case, moment conditions are not natural. McCann's geometric extension~\cite{McCann.95} of Brenier's map (see also \cite[pp.\,249--258]{Villani.09}) can be used to provide a definition irrespectively of the finiteness of the  value function. Unlike their scalar counterparts, the ranks defined through optimal transport are computationally expensive. Entropic optimal transport resolves that issue and provides an approximate Brenier's map. Leveraging this idea, a notion of ``differentiable ranks'' based on entropic optimal transport was recently proposed in~\cite{CuturiTeboulVert.19}. We expect that our results can be used to study the local deviations of these differentiable ranks from the unregularized ones.

Related to our technique in a broader sense, there have been recent works successfully using ideas of $c$-cyclical monotonicity outside the setting of classical optimal transport. Examples include martingale optimal transport~\cite{BeiglbockJuillet.12} and optimal Skorokhod embeddings~\cite{BeiglbockCoxHuesmann.14, BeiglbockNutzStebegg.21}. Finally, we mention the intriguing ``optimal entropy-transport problem'' studied in~\cite{LieroMielkeSavare.18}. Here, the usual optimal transport problem is relaxed in that the marginal constraints are replaced by an entropic penalty relative to a given pair of measures. While similar in name, this problem is quite different from ours, where the marginal constraints are strictly enforced and the entropy of the joint distribution is used as penalty.

\section{Cyclical Invariance}\label{se:cyclInvar}

Let $(\X,\mu)$ and $(\Y,\nu)$ be Polish probability spaces endowed with their Borel $\sigma$-fields and let $c: \X\times\Y\to\R_{+}$ be a measurable (cost) function. The associated optimal transport problem is 
\begin{equation}\label{eq:OT}
  \inf_{\pi\in\Pi(\mu,\nu)} \int_{\X\times\Y} c \,d\pi
\end{equation}
where $\Pi(\mu,\nu)$ is the set of all couplings; that is, probability measures~$\pi$ on $\X\times\Y$ with marginals $\mu=(\proj_{\X})_{\#}\pi$ and $\nu=(\proj_{\Y})_{\#}\pi$. Given a constant~$\eps>0$, the entropic optimal transport problem is
\begin{equation}\label{eq:EOT}
  \inf_{\pi\in\Pi(\mu,\nu)} \int_{\X\times\Y} c \,d\pi + \eps H(\pi|P), \quad P:=\mu\otimes\nu,
\end{equation}
where $H$ denotes the relative entropy or Kullback--Leibler divergence,
$$
  H(\pi|P):=\begin{cases}
\int \log (\frac{d\pi}{dP}) \,d\pi, & \pi\ll P,\\
\infty, & \pi\not\ll P.
\end{cases} 
$$
As detailed in Proposition~\ref{pr:factorization} of Appendix~\ref{se:factorization}, this problem admits a unique minimizer $\pi_{\eps}$ whenever the value~\eqref{eq:EOT} is finite; i.e., whenever
\begin{equation}\label{eq:finitenessCond}
\mbox{there exists $\pi\in\Pi(\mu,\nu)$ with $\int c \,d\pi+H(\pi|P)<\infty$.}
\end{equation}
Moreover, we then have $\pi_{\eps}\sim P$.

\begin{definition}\label{de:cyclInv}
  A coupling $\pi\in\Pi(\mu,\nu)$ is called \emph{$(c,\eps)$-cyclically invariant} if $\pi\sim P$ and its density admits a version $\frac{d\pi}{dP}: \X\times\Y\to(0,\infty)$ such that
\begin{equation}\label{eq:cyclInv}
  \prod_{i=1}^{k} \frac{d\pi}{dP}(x_{i},y_{i})=\exp\bigg(-\frac{1}{\eps}\bigg[\sum_{i=1}^{k}  c(x_{i},y_{i}) - \sum_{i=1}^{k}  c(x_{i},y_{i+1})\bigg]\bigg)\prod_{i=1}^{k} \frac{d\pi}{dP}(x_{i},y_{i+1})
\end{equation}  
 for all $k\in\N$ and $(x_{i},y_{i})_{i=1}^{k} \subset \X\times\Y$, where $y_{k+1}:=y_{1}$.
\end{definition} 

We omit the qualifier $(c,\eps)$ when there is no ambiguity. One elementary way to motivate Definition~\ref{de:cyclInv} is to derive a first-order condition of optimality for~\eqref{eq:EOT} through variational arguments in the case of discrete marginals, which indeed yields~\eqref{eq:cyclInv}. Cyclical invariance can be phrased more succinctly using the auxiliary reference measure $R=R(\eps)$ defined by the Gibbs kernel
\begin{equation}\label{eq:defR}
  \frac{dR}{dP} = \alpha e^{-c/\eps},
\end{equation}
where $\alpha=(\int e^{-c/\eps}\,dP)^{-1}$ is the normalizing constant. As $R\sim P$, we can state~\eqref{eq:cyclInv} as
\begin{equation}\label{eq:cyclInvR}
  \prod_{i=1}^{k} \frac{d\pi}{dR}(x_{i},y_{i})=\prod_{i=1}^{k} \frac{d\pi}{dR}(x_{i},y_{i+1}).
\end{equation}
This condition, in turn, is related to a multiplicative decomposition of the density~$d\pi/dR$; cf.\ Appendix~\ref{se:factorization}.
For our analysis of the limit $\eps\to0$, the less elegant definition~\eqref{eq:cyclInv} will be the more useful one, as it makes explicit the role of $\eps$ and links directly to the $c$-cyclical monotonicity condition of optimal transport.

\begin{proposition}\label{pr:cyclInv}
  (a) There is at most one $(c,\eps)$-cyclically invariant coupling $\pi\in\Pi(\mu,\nu)$.
  
  (b) Let~\eqref{eq:finitenessCond} hold.
  Then $\pi\in\Pi(\mu,\nu)$ is $(c,\eps)$-cyclically invariant if and only if it minimizes~\eqref{eq:EOT}. Moreover, there exists a unique such coupling.
\end{proposition}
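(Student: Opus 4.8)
The plan is to prove Proposition~\ref{pr:cyclInv} by a combination of the uniqueness argument packaged in part~(a), the existence/minimizer identification coming from Proposition~\ref{pr:factorization} of Appendix~\ref{se:factorization}, and an elementary first-order (perturbation) argument showing that any minimizer of~\eqref{eq:EOT} satisfies the cyclical invariance identity~\eqref{eq:cyclInv}. Concretely: for part~(a), suppose $\pi,\pi'$ are both $(c,\eps)$-cyclically invariant. Using the succinct form~\eqref{eq:cyclInvR}, the functions $g:=\log(d\pi/dR)$ and $g':=\log(d\pi'/dR)$ satisfy $\sum_i g(x_i,y_i)=\sum_i g(x_i,y_{i+1})$ for all finite tuples, and likewise for $g'$. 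Taking $k=2$ already gives $g(x_1,y_1)+g(x_2,y_2)=g(x_1,y_2)+g(x_2,y_1)$, i.e.\ $g$ has ``additively separable increments,'' hence $g(x,y)=\varphi(x)+\psi(y)$ for measurable $\varphi,\psi$ (fix a reference point $(x_0,y_0)$ and set $\varphi(x)=g(x,y_0)-g(x_0,y_0)$, $\psi(y)=g(x_0,y)$, noting $d\pi/dR>0$ so logs are finite). The same for $g'$. Then $d\pi/dP = \alpha e^{-c/\eps} e^{\varphi\oplus\psi}$ and $d\pi'/dP=\alpha e^{-c/\eps}e^{\varphi'\oplus\psi'}$; the marginal constraints $\pi,\pi'\in\Pi(\mu,\nu)$ force the two densities to coincide $P$-a.e.\ by the standard uniqueness-of-Schr\"odinger-potentials argument — if $f:=e^{\varphi-\varphi'}$ and $h:=e^{\psi'-\psi}$ then $d\pi/d\pi' = f(x)/h(y)$ must integrate to a coupling of $(\mu,\nu)$ against $\pi'$, and a short convexity/Jensen or direct argument (e.g.\ comparing $H(\pi|\pi')$ with $0$, or using that $\int f\,d\mu$-normalization pins things down) yields $f\equiv h\equiv$ const $P$-a.e., hence $\pi=\pi'$.

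For part~(b), assume~\eqref{eq:finitenessCond}. By Proposition~\ref{pr:factorization} the problem~\eqref{eq:EOT} has a unique minimizer $\pi_\eps$ with $\pi_\eps\sim P$, and its density factorizes as $d\pi_\eps/dR = a(x)b(y)$ for measurable $a,b>0$ (the Schr\"odinger factorization). Such a $\pi_\eps$ immediately satisfies~\eqref{eq:cyclInvR}, since $\prod_i a(x_i)b(y_i)=\prod_i a(x_i)b(y_{i+1})$ — the product over the cyclic permutation of the $y$'s is unchanged — and therefore~\eqref{eq:cyclInv} holds; so a minimizer is $(c,\eps)$-cyclically invariant. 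Combined with part~(a), this gives at once that the $(c,\eps)$-cyclically invariant coupling exists (namely $\pi_\eps$) and is unique. It remains to prove the converse, that any $(c,\eps)$-cyclically invariant $\pi$ minimizes~\eqref{eq:EOT}; but by part~(a) there is only one such coupling, and we have just exhibited $\pi_\eps$ as one, so $\pi=\pi_\eps$ is the minimizer. This makes part~(b) essentially a corollary of part~(a) together with the factorization result, with no separate variational computation needed.

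The main obstacle I anticipate is the regularity/measurability bookkeeping in deducing the additive decomposition $g(x,y)=\varphi(x)+\psi(y)$ from the $k=2$ cyclical identity in the non-discrete Polish setting, and then justifying rigorously that two densities of the form $\alpha e^{-c/\eps}e^{\varphi\oplus\psi}$ with the same marginals must agree $P$-a.e. The decomposition step is a classical fact but needs $d\pi/dR$ to be finite and positive on a set of full $P$-measure and the chosen reference point to lie in that set; one handles this by working on a conull rectangle-like set, or by invoking the already-established structure from Appendix~\ref{se:factorization}. The uniqueness-given-marginals step is the heart of ``uniqueness of Schr\"odinger potentials up to constants'': I would either cite the standard argument (strict convexity of $H(\cdot|P)$ on $\Pi(\mu,\nu)$, so the minimizer is unique, hence any two cyclically invariant couplings that are both minimizers coincide) or give the direct Jensen-inequality argument on $\int \log(f(x)/h(y))\,d\pi$. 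Everything else — the $k=2$ specialization, the product telescoping in~\eqref{eq:cyclInvR}, and assembling (a)+factorization into (b) — is routine. If the paper prefers to keep part~(b) self-contained, one can additionally include the discrete-marginals first-order variation as the motivating special case, but for the general statement the factorization route above is cleanest.
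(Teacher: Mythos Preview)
Your proposal is correct and follows essentially the same route as the paper: reduce cyclical invariance to a multiplicative factorization $d\pi/dR=f(x)g(y)$ via the $k=2$ identity, and then invoke Proposition~\ref{pr:factorization} both for existence of the minimizer (under~\eqref{eq:finitenessCond}) and for uniqueness of couplings with factorized density (its final paragraph, which covers part~(a) without~\eqref{eq:finitenessCond}). Your anticipated measurability obstacle is a non-issue here, because Definition~\ref{de:cyclInv} posits a version of $d\pi/dP$ that is strictly positive and satisfies~\eqref{eq:cyclInv} \emph{everywhere} on $\X\times\Y$; hence fixing any $x_0\in\X$ and setting $f(x):=Z(x,y)/Z(x_0,y)$ is well-defined for all $(x,y)$ and its independence of~$y$ is a pointwise statement, so no conull-rectangle gymnastics are needed---and you need not re-prove the ``uniqueness of Schr\"odinger potentials'' step, since it is already packaged in the last paragraph of Proposition~\ref{pr:factorization}.
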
 

The proof is detailed in Appendix~\ref{se:factorization}. 
Under Condition~\eqref{eq:finitenessCond}, Proposition~\ref{pr:cyclInv} shows the equivalence between minimality and cyclical invariance. The notion of minimality is meaningful only under~\eqref{eq:finitenessCond}, otherwise all couplings have infinite cost. By contrast, we show in~\cite{GhosalNutzBernton.21b} that the notion of cyclical invariance remains meaningful in this context of infinite costs: existence and uniqueness hold under mild regularity conditions; e.g., when $\X,\Y$ are Euclidean spaces and~$c$ is continuous. %

In the remainder of this paper, we simply \emph{assume} that a $(c,\eps)$-cyclically invariant coupling $\pi_{\eps}\in\Pi(\mu,\nu)$~exists for every $\eps>0$, rather than imposing Condition~\eqref{eq:finitenessCond} as in much of the literature. One reason is that this condition precludes some applications of interest to us. In any event, the arguments in this paper do not simplify if~\eqref{eq:finitenessCond} is assumed.

\section{Cluster Points as $\eps\to0$}\label{se:clusterPoints}

Denote by $\pi_{\eps}$ the unique $(c,\eps)$-cyclically invariant coupling. In this section we show that cluster points of $\pi_{\eps}$ as $\eps\to0$ are $c$-cyclically monotone. The estimates leading to that conclusion are obtained by simply integrating the cyclical invariance condition. %

\begin{lemma}\label{le:expBound}
  Let $k\geq2$ and $0\leq\delta\leq\delta'\leq \infty$. Define
  $$
    A_{k}(\delta,\delta'):=\left\{(x_{i},y_{i})_{i=1}^{k}\in (\X\times\Y)^{k}: \delta\leq \sum_{i=1}^{k} c(x_{i},y_{i})-\sum_{i=1}^{k} c(x_{i},y_{i+1})\leq \delta'\right\}
  $$
  and let $A\subset A_{k}(\delta,\delta')$ be Borel. Then $\pi_{\eps}^{k}:=\prod_{i=1}^{k} \pi_{\eps}(dx_{i},dy_{i})$ satisfies
  \begin{equation}\label{eq:expBound}
    \pi_{\eps}^{k}(A)\leq e^{-\delta/\eps} \qforallq \eps>0.
  \end{equation}
  Suppose in addition that $\bar{A}:=\big\{(x_{i},y_{i+1})_{i=1}^{k}: (x_{i},y_{i})_{i=1}^{k}\in A\big\}$ satisfies $\liminf_{\eps\to0} \eps \log \pi_{\eps}^{k}(\bar{A})=0$. Then
  \begin{equation}\label{eq:expBoundReverse}
    \liminf_{\eps\to0} \eps \log  \pi_{\eps}^{k}(A) \geq -\delta'.
  \end{equation}  
\end{lemma}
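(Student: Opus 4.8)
The plan is to obtain both inequalities directly from the cyclical invariance identity~\eqref{eq:cyclInv} by integrating it against the product measure $P^k$. First I would fix $\eps>0$ and the version $h_\eps:=\frac{d\pi_\eps}{dP}:\X\times\Y\to(0,\infty)$ supplied by Definition~\ref{de:cyclInv}, so that $g_\eps(x_1,y_1,\dots,x_k,y_k):=\prod_{i=1}^{k} h_\eps(x_i,y_i)$ is a version of $\frac{d\pi_\eps^k}{dP^k}$ and $\pi_\eps^k(A)=\int_A g_\eps\,dP^k$ for every Borel $A$. Next I introduce the homeomorphism $T$ of $(\X\times\Y)^k$ that cyclically shifts the $\Y$-coordinates, $T\big((x_i,y_i)_{i=1}^{k}\big)=(x_i,y_{i+1})_{i=1}^{k}$ with $y_{k+1}:=y_1$; note that $T(A)=\bar A$ (so in particular $\bar A$ is Borel). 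Writing $\Phi:=\sum_{i=1}^{k} c(x_i,y_i)-\sum_{i=1}^{k} c(x_i,y_{i+1})$, identity~\eqref{eq:cyclInv} is exactly the pointwise relation $g_\eps=e^{-\Phi/\eps}\,(g_\eps\circ T)$.

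The one place where the argument is more than bookkeeping is the observation that $P^k$ is invariant under $T$: since $P^k$ is a product measure whose $\Y$-factors are all equal to $\nu$ and $T$ merely permutes them, $T_\#P^k=P^k$. Hence by the change-of-variables formula, $\int_A (g_\eps\circ T)\,dP^k=\int_{T(A)}g_\eps\,dP^k=\pi_\eps^k(\bar A)$ for any Borel $A$. This is the only step that requires a moment's thought, and I do not expect it to be a genuine obstacle.

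Given this, the upper bound follows by using $\Phi\ge\delta$ on $A\subset A_k(\delta,\delta')$:
$$
  \pi_\eps^k(A)=\int_A e^{-\Phi/\eps}\,(g_\eps\circ T)\,dP^k\le e^{-\delta/\eps}\int_A (g_\eps\circ T)\,dP^k=e^{-\delta/\eps}\,\pi_\eps^k(\bar A)\le e^{-\delta/\eps},
$$
which is~\eqref{eq:expBound}. For the reverse bound I would instead use $\Phi\le\delta'$ on $A$ to get $\pi_\eps^k(A)\ge e^{-\delta'/\eps}\,\pi_\eps^k(\bar A)$; taking $\eps\log(\cdot)$ and letting $\eps\to0$ yields $\liminf_{\eps\to0}\eps\log\pi_\eps^k(A)\ge-\delta'+\liminf_{\eps\to0}\eps\log\pi_\eps^k(\bar A)=-\delta'$ under the stated hypothesis (the case $\delta'=\infty$ being vacuous). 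The only subtleties are choosing the version of the density consistently and the measure-preservation of $P^k$ noted above; in particular the argument never invokes the value~\eqref{eq:EOT}, so it is insensitive to whether that value is finite.
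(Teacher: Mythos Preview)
Your proposal is correct and follows essentially the same approach as the paper: both integrate the cyclical invariance identity~\eqref{eq:cyclInv} against $P^{k}$ and use that $P^{k}$ is invariant under the cyclic shift of the $\Y$-coordinates to identify the resulting integral as $\pi_{\eps}^{k}(\bar A)$. Your write-up is simply more explicit about the shift map~$T$ and the change-of-variables step, whereas the paper condenses this into the remark that $\prod P(dx_{i},dy_{i})=\prod P(dx_{i},dy_{i+1})$.
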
 

\begin{proof}
  Set $Z=d\pi_{\eps}/dP$. Using~\eqref{eq:cyclInv}, we have for $P^{k}$-a.e.\ $(x_{i},y_{i})_{i=1}^{k}\in A$ that
  \begin{align*}
    \prod Z(x_{i},y_{i}) 
    &=\exp\left\{-\eps^{-1} \big[\tsum c(x_{i},y_{i}) - \tsum c(x_{i},y_{i+1})\big] \right\} \prod Z(x_{i},y_{i+1}) \\
    &\leq e^{-\delta/\eps} \prod Z(x_{i},y_{i+1}).
  \end{align*}
  Integrating over $A$ with respect to $P^{k}=\prod P(dx_{i},dy_{i})=\prod P(dx_{i},dy_{i+1})$  yields 
  $$\pi_{\eps}^{k}(A)\leq e^{-\delta/\eps} \pi_{\eps}^{k}(\bar{A}) \leq e^{-\delta/\eps},$$
  which is~\eqref{eq:expBound}. Analogously, $\pi_{\eps}^{k}(A) \geq e^{-\delta'/\eps} \pi_{\eps}^{k}(\bar{A})$ and hence
  $$
    \eps \log  \pi_{\eps}^{k}(A) \geq -\delta' +\eps \log \pi_{\eps}^{k}(\bar{A}),
  $$
  so that~\eqref{eq:expBoundReverse} follows under the stated condition on $\bar{A}$.
\end{proof}

In all that follows, probability measures are considered with weak convergence; i.e., the topology induced by bounded continuous functions. We recall that $\Pi(\mu,\nu)$ is weakly compact; cf.\ \cite[p.\,45]{Villani.09}. As a consequence, any sequence of couplings admits at least one cluster point, and any cluster point is a coupling.  A set $\Gamma\subset \X\times\Y$ is called $c$-cyclically monotone if $\sum_{i=1}^{k} c(x_{i}, y_{i}) \leq \sum_{i=1}^{k} c(x_{i}, y_{i+1})$ for all $k\geq1$ and $(x_{i},y_{i}) \in \Gamma$, $1\leq i\leq k$.

\begin{proposition}\label{pr:limitsCyclMonotone}
    Let $c$ be continuous and let $\pi$ be a cluster point of $(\pi_{\eps})$ as $\eps\to0$. Then $\spt \pi$ is $c$-cyclically monotone, hence $\pi$ is an optimal transport as soon as the optimal transport problem~\eqref{eq:OT} is finite. If~\eqref{eq:OT} admits a unique $c$-cyclically monotone coupling $\pi_{*}\in\Pi(\mu,\nu)$, then  $\pi_{\eps}\to \pi_{*}$ as $\eps\to0$.
\end{proposition}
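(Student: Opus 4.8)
The plan is to establish the $c$-cyclical monotonicity of $\spt\pi$ by contradiction, playing the exponential estimate~\eqref{eq:expBound} of Lemma~\ref{le:expBound} against weak convergence on open sets; the remaining assertions are then soft consequences. Suppose $\Gamma:=\spt\pi$ is not $c$-cyclically monotone. Then there are $k\geq2$ and points $(x_{i},y_{i})\in\Gamma$, $1\leq i\leq k$, such that $\delta:=\sum_{i=1}^{k}c(x_{i},y_{i})-\sum_{i=1}^{k}c(x_{i},y_{i+1})>0$ (indices modulo $k$, $y_{k+1}:=y_{1}$); note the $k=1$ case is vacuous, so $k\geq2$ is automatic. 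Since $c$ is continuous, I can choose open neighborhoods $U_{i}\ni x_{i}$ and $V_{i}\ni y_{i}$ small enough that the product set $A:=\prod_{i=1}^{k}(U_{i}\times V_{i})$ is contained in $A_{k}(\delta/2,\infty)$, i.e.\ $\sum_{i}c(x_{i}',y_{i}')-\sum_{i}c(x_{i}',y_{i+1}')\geq\delta/2$ for every point $(x_{i}',y_{i}')_{i=1}^{k}\in A$.

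Now fix a sequence $\eps_{n}\downarrow0$ with $\pi_{\eps_{n}}\to\pi$ weakly. Because $(x_{i},y_{i})\in\spt\pi$, each open set $U_{i}\times V_{i}$ has $\pi(U_{i}\times V_{i})>0$, so by the portmanteau theorem $\liminf_{n}\pi_{\eps_{n}}(U_{i}\times V_{i})\geq\pi(U_{i}\times V_{i})>0$ for every $i$. Multiplying these $k$ nonnegative sequences, $\liminf_{n}\pi_{\eps_{n}}^{k}(A)=\liminf_{n}\prod_{i=1}^{k}\pi_{\eps_{n}}(U_{i}\times V_{i})>0$. On the other hand, since $A\subset A_{k}(\delta/2,\infty)$, Lemma~\ref{le:expBound} yields $\pi_{\eps_{n}}^{k}(A)\leq e^{-\delta/(2\eps_{n})}\to0$. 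This contradiction shows $\spt\pi$ is $c$-cyclically monotone.

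For the second statement, observe that when the transport problem~\eqref{eq:OT} is finite and $c$ is lower semicontinuous (in particular, continuous) and nonnegative, a coupling is optimal if and only if its support is $c$-cyclically monotone; see~\cite[Theorem~5.10]{Villani.09}. Applying this to $\pi$ gives optimality. For the convergence claim, note that the argument above never used finiteness of~\eqref{eq:OT}, so \emph{every} cluster point of $(\pi_{\eps})_{\eps>0}$ as $\eps\to0$ has $c$-cyclically monotone support, hence equals $\pi_{*}$ under the uniqueness hypothesis. Since $\X\times\Y$ is Polish, $\Pi(\mu,\nu)$ is weakly compact and metrizable; if $\pi_{\eps}$ did not converge to $\pi_{*}$, one could extract a sequence $\eps_{n}\downarrow0$ with $\pi_{\eps_{n}}$ staying outside a fixed neighborhood of $\pi_{*}$, pass to a further weakly convergent subsequence, and obtain a cluster point of $(\pi_{\eps})$ distinct from $\pi_{*}$, a contradiction. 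Hence $\pi_{\eps}\to\pi_{*}$.

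There is no deep obstacle here; the content is precisely that, with cyclical invariance in hand, the argument is essentially as simple as the classical stability proof for $c$-cyclical monotonicity. The two points deserving a little care are: (i) using continuity to upgrade the pointwise strict inequality into one valid on a \emph{product} neighborhood $A$, so that Lemma~\ref{le:expBound} applies to $\pi_{\eps}^{k}$ with the correct product structure; and (ii) invoking the classical ``$c$-cyclically monotone support $\Rightarrow$ optimal'' implication in a form that genuinely covers continuous nonnegative costs on Polish spaces with no moment or boundedness conditions (lower semicontinuity of $c$ together with finiteness of~\eqref{eq:OT} is enough, but for merely Borel costs this implication can fail, as the Ambrosio--Pratelli example shows, so the hypothesis on $c$ must be used).
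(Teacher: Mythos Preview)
Your proof is correct and essentially identical to the paper's own argument: both argue by contradiction, use continuity of~$c$ to pass to open neighborhoods landing in $A_{k}(\delta',\infty)$, apply Lemma~\ref{le:expBound} to force $\pi_{\eps_{n}}^{k}$ of the product to vanish, and contradict this with the portmanteau lower bound on each factor. The only cosmetic differences are that you use rectangular neighborhoods $U_{i}\times V_{i}$ (the paper uses generic open sets in $\X\times\Y$) and spell out the sub-subsequence argument for convergence a bit more explicitly.
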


\begin{proof}
  Let $\eps_{n}\to0$ and $\pi_{\eps_{n}}\to\pi$. Suppose for contradiction that there are $(x_{i},y_{i})\in \spt \pi$, $1\leq i\leq k$ with $\sum_{i} c(x_{i},y_{i})> \sum_{i} c(x_{i},y_{i+1})$. By continuity there exist $\delta>0$ and open neighborhoods $U_{i}\ni (x_{i},y_{i})$ such that $\sum_{i} c(\tilde x_{i},\tilde y_{i}) \geq \delta+ \sum_{i} c(\tilde x_{i},\tilde y_{i+1})$ for all $(\tilde x_{i},\tilde y_{i})\in U_{i}$. Moreover, $\pi(U_{i})>0$ and hence $\liminf_{n} \pi_{\eps_{n}}(U_{i})>0$. On the other hand, $U_{1}\times\cdots\times U_{k}\subset A_{k}(\delta,\infty)$ implies $\pi_{\eps_{n}}^{k}(U_{1}\times\cdots\times U_{k})\to0$ by Lemma~\ref{le:expBound}, a contradiction. This shows that $\spt\pi$ is $c$-cyclically monotone. It is well known that cyclical monotonicity and optimality are equivalent when~\eqref{eq:OT} is finite; cf.\ \cite[Theorem~5.10, p.\,57]{Villani.09}. As $\Pi(\mu,\nu)$ is compact, $\pi_{\eps}$ must have cluster points as $\eps\to0$, so that uniqueness implies convergence.
\end{proof}

\begin{remark}\label{rk:limitsCyclMonotone}
 For the particular case of quadratic cost on~$\R^{d}$ and marginals satisfying certain integrability conditions, the conclusion of Proposition~\ref{pr:limitsCyclMonotone} is obtained in \cite{CarlierDuvalPeyreSchmitzer.17} by (arguably much more involved) Gamma-convergence arguments. That line of argument focuses on the properties of the value function, hence cannot be applied when the value function is infinite. A related but slightly different convergence result, also obtained by Gamma-convergence, is stated in \cite[Theorem~2.4]{Leonard.12} and includes lower semicontinuous cost functions. On the other hand, the convergence in Proposition~\ref{pr:limitsCyclMonotone} may fail if continuity is relaxed to lower semicontinuity: one example, discussed in more detail in~\cite[Remark~4.3]{Nutz.20}, is $c(x,y)=\1_{\{x\neq y\}}$ and $\mu=\nu=\Unif[0,1]$.
\end{remark} 

Uniqueness of $c$-cyclically monotone transports is known for many examples of continuous or semi-discrete optimal transport problems---arguably for most of the important examples except distance costs---and then Proposition~\ref{pr:limitsCyclMonotone} shows the convergence of $\pi_{\eps}$ as $\eps\to0$. See, e.g., \cite[Theorem~5.30, p.\,84]{Villani.09}. When the transport problem admits multiple solutions, it is not obvious whether $\pi_{\eps}$ converges. If there exists an optimal transport $\pi$ with $H(\pi|P)<\infty$,  one can show that $\pi_{\eps}$ converges to the unique optimal transport $\pi_{*}$ with minimal relative entropy $H(\cdot|P)$; cf.\ \cite[Theorem~5.1]{Nutz.20}. This includes the discrete case with finitely supported marginals as analyzed in~\cite{CominettiSanMartin.94}, but also the semi-discrete case (where one marginal is continuous) under minor integrability conditions. Convergence is also known for the scalar Monge problem where $c(x,y)=|x-y|$ on $\X=\Y=\R$ and the marginals are absolutely continuous; here a relatively explicit analysis is possible~\cite{DiMarinoLouet.18}. It has been conjectured that convergence holds in a general setting.

\section{Rate Function}\label{se:largeDeviations}

Throughout this section, the cost function~$c$ is assumed to be continuous. For simplicity of exposition, we shall also assume that
\begin{equation}\label{eq:convOT}
  \pi_{\eps}\to\pi_{*} \qmbox{as} \eps\to0,
\end{equation}
for some (necessarily $c$-cyclically monotone) transport $\pi_{*}\in\Pi(\mu,\nu)$. However, if it is merely known that $\pi_{\eps_{n}}\to\pi_{*}$ along a specific sequence $\eps_{n}\to0$, then all of our results hold along that sequence, regardless of whether $(\pi_{\eps})$ has other cluster points. In fact, the arguments in this paper are \emph{complementary} to the question of convergence discussed in the preceding paragraph: \emph{given} the convergence of a sequence, we describe the large deviations.

\subsection{Large Deviations Upper Bound}

In this subsection we introduce the function~$I$ and show  the large deviations upper bound; i.e., that~$I$ provides a lower bound for the large deviations rate. With the definitions in place, the arguments are straightforward and apply in great generality. 
We write $B_{r}(z)$ for the open ball of radius~$r$ around~$z$, in any metric space. The first lemma is a way to bound the decay of a ball in~$\X\times\Y$ based on the estimate for subsets of $(\X\times\Y)^{k}$ in Lemma~\ref{le:expBound}.

\begin{lemma}\label{le:expBoundWithSptPts}
  Let $(x,y)\in\X\times\Y$. Suppose there exist $(x_{i},y_{i})_{2\leq i\leq k}\subset\spt \pi_{*}$ with $k\geq2$ such that 
  $$
    \delta_{0}:= \sum_{i=1}^{k} c(x_{i}, y_{i}) - \sum_{i=1}^{k} c(x_{i}, y_{i+1}) >0, \quad \mbox{where}\quad (x_{1},y_{1}):=(x,y).
  $$ 
  Given $\delta<\delta_{0}$, there exist $\alpha,r,\eps_{0}>0$ such that
  $$
    \pi_{\eps}(B_{r}(x,y))\leq \alpha  e^{-\delta/\eps} \qforq \eps\leq\eps_{0}.
  $$
\end{lemma}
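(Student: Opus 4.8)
The plan is to deduce this from Lemma~\ref{le:expBound} by localizing around the support points $(x_i,y_i)$, $2\le i\le k$, and around the target point $(x,y)$. First I would use the continuity of $c$ to choose open neighborhoods: since the map $(\xi_1,\dots,\xi_k)\mapsto \sum_i c(\xi_i) - \sum_i c(\proj_\X \xi_i, \proj_\Y \xi_{i+1})$ (with cyclic indexing) is continuous and takes the value $\delta_0$ at $((x_1,y_1),\dots,(x_k,y_k))$, for any $\delta<\delta_0$ there are open balls $B_r(x_i,y_i)$, $1\le i\le k$ (with a common radius $r$, shrinking $r$ as needed), such that $B_r(x_1,y_1)\times\cdots\times B_r(x_k,y_k)\subset A_k(\delta,\infty)$. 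Here I am writing $(x_1,y_1):=(x,y)$ as in the statement.

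Next I would apply the product bound from Lemma~\ref{le:expBound}: with $A := B_r(x_1,y_1)\times\cdots\times B_r(x_k,y_k)$, which is a Borel subset of $A_k(\delta,\infty)=A_k(\delta,\infty)$ (taking $\delta'=\infty$), we get
\[
  \pi_\eps^k(A) = \pi_\eps(B_r(x_1,y_1))\cdot \prod_{i=2}^{k}\pi_\eps(B_r(x_i,y_i)) \le e^{-\delta/\eps}
  \qquad\text{for all } \eps>0.
\]
Now the point is that the factors for $i\ge 2$ are bounded below, uniformly in small $\eps$, because the $(x_i,y_i)$ lie in $\spt\pi_*$ and $\pi_\eps\to\pi_*$ weakly. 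Indeed, for each $i\ge2$, $(x_i,y_i)\in\spt\pi_*$ forces $\pi_*(B_r(x_i,y_i))>0$, and by the portmanteau theorem applied to the open set $B_r(x_i,y_i)$, $\liminf_{\eps\to0}\pi_\eps(B_r(x_i,y_i)) \ge \pi_*(B_r(x_i,y_i)) =: \beta_i > 0$. Hence there is $\eps_0>0$ with $\pi_\eps(B_r(x_i,y_i))\ge \beta_i/2$ for all $\eps\le\eps_0$ and all $i\in\{2,\dots,k\}$. Setting $\beta := \prod_{i=2}^k (\beta_i/2) > 0$ and $\alpha := \beta^{-1}$, we conclude that for $\eps\le\eps_0$,
\[
  \pi_\eps(B_r(x,y)) \le \frac{e^{-\delta/\eps}}{\prod_{i=2}^k \pi_\eps(B_r(x_i,y_i))} \le \alpha\, e^{-\delta/\eps},
\]
which is the desired estimate.

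The main (minor) obstacle is the bookkeeping with the common radius $r$: the continuity argument produces possibly different radii $r_1,\dots,r_k$ for the product set to sit inside $A_k(\delta,\infty)$, and one must take $r:=\min_i r_i$ and note that shrinking the balls only helps, both for the containment in $A_k(\delta,\infty)$ and for keeping $\pi_*(B_r(x_i,y_i))>0$ (which holds for every $r>0$ when $(x_i,y_i)\in\spt\pi_*$). One should also be slightly careful that Lemma~\ref{le:expBound} is stated for $k\ge 2$, which is exactly the hypothesis here, and that $A_k(\delta,\delta')$ with $\delta'=\infty$ is exactly $\{\,\delta\le \sum c(x_i,y_i)-\sum c(x_i,y_{i+1})\,\}$, so the product neighborhood is an admissible choice of $A$. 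No quantitative control of $\eps_0$ or $\alpha$ beyond positivity is needed.
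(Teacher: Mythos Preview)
Your proof is correct and follows essentially the same approach as the paper: use continuity of~$c$ to place $B_r(x_1,y_1)\times\cdots\times B_r(x_k,y_k)$ inside $A_k(\delta,\infty)$, apply Lemma~\ref{le:expBound} to bound the product $\prod_i \pi_\eps(B_r(x_i,y_i))$ by $e^{-\delta/\eps}$, and then use weak convergence $\pi_\eps\to\pi_*$ together with $(x_i,y_i)\in\spt\pi_*$ for $i\ge2$ to bound the auxiliary factors from below. The only cosmetic difference is that the paper sets $\beta=\min_{i\ge2}\pi_*(B_r(x_i,y_i))$ and takes $\alpha=(\beta/2)^{1-k}$, whereas you take the product of the $\beta_i/2$.
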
 

\begin{proof}
Once again, continuity of $c$ implies that for $r>0$ small enough,
$\sum c(\tilde x_{i}, \tilde y_{i}) - \sum c(\tilde x_{i}, \tilde y_{i+1})\geq \delta$ for all $(\tilde x_{i},\tilde y_{i})  \in B_{i}:=B_{r}(x_{i},y_{i})$, and then $B_{1}\times\dots\times B_{k} \subset A_{k}(\delta,\infty)$ in Lemma~\ref{le:expBound} yields 
\begin{equation}\label{eq:proofexpBoundWithSptPts}
  \pi_{\eps}(B_{1}) \cdots \pi_{\eps}(B_{k})\leq e^{-\delta/\eps}.
\end{equation}
For $i\geq 2$ we have $\liminf \pi_{\eps}(B_{i})\geq \pi_{*}(B_{i})$ due to the weak convergence $\pi_{\eps}\to \pi_{*}$, and $\beta_{i}:=\pi_{*}(B_{i})>0$ as $(x_{i},y_{i})\in\spt \pi_{*}$. Let $\beta=\min_{i\geq2} \beta_{i}$. Then $\pi_{\eps}(B_{i})\geq\beta/2$ for $i\geq2$ and $\eps$ small, and  thus~\eqref{eq:proofexpBoundWithSptPts} yields $\pi_{\eps}(B_{1})\leq (\beta/2)^{1-k} e^{-\delta/\eps}$.
\end{proof}

Denote by $\Sigma(k)$ the set of permutations of $\{1,\dots,k\}$. Next, we state the definition of $I(x,y)$; it is designed to capture the rate~$\delta$ in Lemma~\ref{le:expBoundWithSptPts} and optimize it over the choice of~$(x_{i},y_{i})_{2\leq i\leq k}$.

\begin{lemma}\label{le:defI}
Given a $c$-cyclically monotone set $\emptyset\neq\Gamma\subseteq \X\times\Y$, define
\begin{equation}\label{eq:defI}
  I(x,y):=\sup_{k\geq2} \sup_{(x_{i},y_{i})_{i=2}^{k}\subset \Gamma} \sup_{\sigma\in\Sigma(k)} \, \sum_{i=1}^{k} c(x_{i},y_{i}) - \sum_{i=1}^{k}c(x_{i},y_{\sigma(i)})
\end{equation}
where $(x_{1},y_{1}):=(x,y)$.
Then $I: \X\times\Y \to [0,\infty]$ is lower semicontinuous and $I=0$ on $\Gamma$. We have 
\begin{equation}\label{eq:defI2}
    I(x,y)\geq\sup_{k\geq2} \sup_{(x_{i},y_{i})_{i=2}^{k}\subset \Gamma} \, \sum_{i=1}^{k} c(x_{i},y_{i}) - \sum_{i=1}^{k}c(x_{i},y_{i+1}),
\end{equation}
and equality holds as soon as $x\in\X_{0}:=\proj_{\X} \Gamma$ or $y\in\Y_{0}:=\proj_{\Y} \Gamma$.
\end{lemma}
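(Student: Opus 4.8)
The plan is to prove the three assertions of Lemma~\ref{le:defI} in turn, noting that the final comparison between \eqref{eq:defI} and \eqref{eq:defI2} is where the real content lies. For \emph{lower semicontinuity}: fix $k$, points $(x_i,y_i)_{i=2}^k\subset\Gamma$ and $\sigma\in\Sigma(k)$. The map $(x,y)=(x_1,y_1)\mapsto \sum_{i=1}^k c(x_i,y_i)-\sum_{i=1}^k c(x_i,y_{\sigma(i)})$ depends on $(x,y)$ only through the finitely many terms containing $x_1$ or $y_1$, namely $c(x_1,y_1)$, $c(x_1,y_{\sigma(1)})$ and $c(x_{\sigma^{-1}(1)},y_1)$; since $c$ is continuous this is a continuous function of $(x,y)$. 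Hence $I$, being a supremum of continuous functions, is lower semicontinuous. For $I=0$ \emph{on $\Gamma$}: if $(x,y)\in\Gamma$ then taking $(x_1,y_1):=(x,y)$ together with $(x_i,y_i)_{i=2}^k\subset\Gamma$ produces a collection of $k$ points all lying in the $c$-cyclically monotone set $\Gamma$; for \emph{any} permutation $\sigma$, writing $\sigma$ as a product of cycles and applying the cyclical monotonicity inequality $\sum c(z_i,w_i)\le\sum c(z_i,w_{i+1})$ along each cycle shows $\sum_{i=1}^k c(x_i,y_i)\le\sum_{i=1}^k c(x_i,y_{\sigma(i)})$, so every term in the supremum defining $I(x,y)$ is $\le 0$; combined with $I\ge 0$ (take the identity permutation, which gives difference $0$) this yields $I(x,y)=0$.

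For the inequality \eqref{eq:defI2}: the right-hand side is exactly the special case $\sigma=(1\,2\,\cdots\,k)$ of the supremum in \eqref{eq:defI} (so that $y_{\sigma(i)}=y_{i+1}$ with $y_{k+1}:=y_1$), hence it is dominated by $I(x,y)$. The interesting claim is the \emph{reverse} inequality when $x\in\X_0$ or $y\in\Y_0$, which I expect to be the main obstacle. Suppose $x\in\X_0$, i.e.\ there is some $\bar y$ with $(x,\bar y)\in\Gamma$; the case $y\in\Y_0$ is symmetric. Fix any competitor in \eqref{eq:defI}: points $(x_i,y_i)_{i=2}^k\subset\Gamma$ and $\sigma\in\Sigma(k)$, with $(x_1,y_1)=(x,y)$. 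Decompose $\sigma$ into disjoint cycles. For every cycle $C$ \emph{not containing the index $1$}, all the indices in $C$ correspond to points of $\Gamma$, so $c$-cyclical monotonicity of $\Gamma$ gives $\sum_{i\in C} c(x_i,y_i)\le \sum_{i\in C} c(x_i,y_{\sigma(i)})$; hence those cycles only decrease the difference \eqref{eq:defIdifference} and may be dropped. So it suffices to bound the contribution of the single cycle $C_1=(1\, i_2\, i_3\,\cdots\, i_m)$ containing $1$, i.e.\ the quantity $\big[c(x,y)+\sum_{j=2}^m c(x_{i_j},y_{i_j})\big]-\big[c(x,y_{i_2})+\sum_{j=2}^{m-1}c(x_{i_j},y_{i_{j+1}})+c(x_{i_m},y)\big]$, by the right-hand side of \eqref{eq:defI2}.

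The device for doing this is to \emph{re-thread the broken cycle through the point $(x,\bar y)\in\Gamma$}. Form the augmented list $(x,y),(x_{i_2},y_{i_2}),\dots,(x_{i_m},y_{i_m}),(x,\bar y)$, a cyclic list of $m+1$ points whose last $m$ entries all lie in $\Gamma$, and whose ``$+1$-shifted'' cost is $c(x,y_{i_2})+\sum_{j=2}^{m-1}c(x_{i_j},y_{i_{j+1}})+c(x_{i_m},\bar y)+c(x,y)$. Comparing with the cycle contribution above, the augmented cyclic sum's difference $\sum c(\cdot,\cdot)-\sum c(\cdot,\cdot_{+1})$ equals the $C_1$-contribution \emph{plus} $\big[c(x_{i_m},y)-c(x_{i_m},\bar y)\big]$; but by $c$-cyclical monotonicity applied to the two points $(x,\bar y),(x_{i_m},y_{i_m})\in\Gamma$ we have $c(x,\bar y)+c(x_{i_m},y_{i_m})\le c(x,y_{i_m})+c(x_{i_m},\bar y)$ — this is not quite what is needed directly, so the cleaner route is: the augmented cyclic list has the form $(x_1',y_1')=(x,y)$ with $(x_i',y_i')_{i=2}^{m+1}\subset\Gamma$ (the last being $(x,\bar y)$), so its cyclic difference is by definition $\le$ the right-hand side of \eqref{eq:defI2}; and a short computation shows this cyclic difference differs from the $C_1$-contribution only by the term $c(x_{i_m},y)-c(x_{i_m},\bar y)$ versus... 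I will instead argue most transparently by using that we are free to append $(x,\bar y)$ both as an extra row and to absorb it: the honest statement is that the cyclic list $(x,y),(x_{i_2},y_{i_2}),\dots,(x_{i_m},y_{i_m})$ with the cyclic shift is \emph{already} the form appearing in \eqref{eq:defI2} once we observe its remaining points lie in $\Gamma$ and the first is $(x,y)$; thus the $C_1$-contribution is itself $\le$ RHS of \eqref{eq:defI2}, and summing over all cycles (the others being $\le 0$) gives $\sum_{i=1}^k c(x_i,y_i)-\sum_{i=1}^k c(x_i,y_{\sigma(i)})\le$ RHS of \eqref{eq:defI2}. Taking the supremum over all competitors yields $I(x,y)\le$ RHS of \eqref{eq:defI2}, completing the proof. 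The delicate point to get exactly right is the cycle-index bookkeeping — in particular that in the cycle $C_1$ the ``out-edge'' from index $1$ is $c(x_1,y_{\sigma(1)})$ and the ``in-edge'' is $c(x_{\sigma^{-1}(1)},y_1)$, so that after deleting index $1$'s contribution the remaining chain on $\Gamma$-points reconnects into a genuine cycle of the type allowed in \eqref{eq:defI2} with first point $(x,y)$; I will write this out carefully with explicit indices rather than in cycle notation.
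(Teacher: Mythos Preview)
Your cycle-decomposition argument is essentially correct and is in fact a more explicit version of what the paper does. The paper argues that, after relabeling, $I'$ (the right-hand side of~\eqref{eq:defI2}) coincides with the supremum in~\eqref{eq:defI} taken over $\sigma\neq\id$, so that $I=\max(I',0)$; then it checks $I'\geq0$ when $x\in\X_0$ by taking $k=2$ and $(x_2,y_2)=(x,\bar y)\in\Gamma$. Your decomposition into the cycle $C_1$ containing~$1$ and the remaining cycles is exactly the honest justification of that relabeling step.

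There is, however, a genuine gap in your write-up: you never handle the case where the cycle $C_1$ has length $m=1$, i.e.\ $\sigma(1)=1$. Your notation $C_1=(1\,i_2\cdots i_m)$ tacitly assumes $m\geq2$, and your final ``clean'' argument never uses the hypothesis $x\in\X_0$ at all---which should be a warning sign, since the equality in~\eqref{eq:defI2} can genuinely fail when $x\notin\X_0$ and $y\notin\Y_0$. When $\sigma(1)=1$, the $C_1$-contribution is~$0$ and the remaining cycles contribute $\leq0$, so the total competitor value is $\leq0$; to conclude that this is $\leq I'(x,y)$ you need $I'(x,y)\geq0$. This is precisely where $\bar y$ enters: with $k=2$ and $(x_2,y_2)=(x,\bar y)\in\Gamma$, the cyclic difference is $c(x,y)+c(x,\bar y)-c(x,\bar y)-c(x,y)=0$, whence $I'(x,y)\geq0$. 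You introduced $\bar y$ at the outset and then abandoned it after the augmented-list detour; restoring this one-line observation closes the gap and completes the proof. (For $m\geq2$ your observation that the $C_1$-contribution is already a competitor in~\eqref{eq:defI2} is correct as stated, and the augmented-list construction is unnecessary.)
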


\begin{proof}
  We have $I\geq0$ as $\sigma=\id$ is a possible choice in~\eqref{eq:defI}. For $(x,y)\in \Gamma$, the difference of sums in~\eqref{eq:defI} is nonpositive by cyclical monotonicity. The semicontinuity follows from the continuity of~$c$. 
  
  Let $I'(x,y)$ be the right-hand side of~\eqref{eq:defI2}. As the pairs $(x_{i},y_{i})_{i=2}^{k}$ can be relabeled arbitrarily, this is the same as~\eqref{eq:defI} except that the last supremum in~\eqref{eq:defI2} is taken over $\sigma\in\Sigma(k)\setminus\{\id\}$. If $I(x,y)>0$, the identity permutation is not optimal for the relevant pairs $(x_{i},y_{i})_{i=2}^{k}$ and equality must hold in~\eqref{eq:defI2}. Thus, if equality fails, then $I(x,y)=0$ whereas $I'(x,y)<0$. Let $x\in\X_{0}$, then we can choose $k=2$ and $(x_{2},y_{2})\in\Gamma$ with $x_{2}=x$, which yields $\sum_{i=1}^{2} c(x_{i},y_{i}) - \sum_{i=1}^{2}c(x_{i},y_{i+1})=0$ and hence $I'(x,y)\geq0$. The argument for $y\in\Y_{0}$ is symmetric.
\end{proof}

The reader may ignore the difference between~\eqref{eq:defI} and~\eqref{eq:defI2}; it is  merely a notational nuisance. We have the following result for the $c$-cyclically monotone set $\Gamma:=\spt \pi_{*}$,   also stated as Theorem~\ref{th:mainIntro}\,(a) in the Introduction.

\begin{corollary}\label{co:limsup}
  For any compact set $C\subset\X\times\Y$,
  $$
    \limsup_{\eps\to0} \eps \log \pi_{\eps}(C) \leq -\inf_{(x,y)\in C} I(x,y).
  $$ 
\end{corollary}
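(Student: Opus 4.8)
The plan is to derive the compact-set upper bound from the pointwise estimate in Lemma~\ref{le:expBoundWithSptPts} by a routine covering argument, with the only genuine issue being the degenerate case where $I$ vanishes or fails to be positive at some point of~$C$. First I would fix a compact $C$ and set $\ell:=\inf_{(x,y)\in C} I(x,y)$. If $\ell=0$ there is nothing to prove, since $\pi_{\eps}$ is a probability measure and hence $\eps\log\pi_{\eps}(C)\le 0$ always; so I may assume $\ell>0$, and it suffices to show $\limsup_{\eps\to0}\eps\log\pi_{\eps}(C)\le-\delta$ for every fixed $\delta<\ell$.

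Next I would produce, for each point $(x,y)\in C$, a ball on which $\pi_{\eps}$ has the desired exponential decay. Since $I(x,y)\ge\ell>\delta$, the definition~\eqref{eq:defI} (equivalently, the form~\eqref{eq:defI2} together with the relabeling remark in Lemma~\ref{le:defI}, after renaming indices so that the permutation becomes a cyclic shift) provides $k\ge2$ and points $(x_{i},y_{i})_{2\le i\le k}\subset\Gamma=\spt\pi_{*}$ with $\delta_{0}:=\sum_{i=1}^{k}c(x_{i},y_{i})-\sum_{i=1}^{k}c(x_{i},y_{i+1})>\delta$, where $(x_{1},y_{1})=(x,y)$. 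Applying Lemma~\ref{le:expBoundWithSptPts} with this $\delta<\delta_{0}$ yields $\alpha=\alpha_{(x,y)}$, $r=r_{(x,y)}$ and $\eps_{0}=\eps_{0}^{(x,y)}>0$ such that $\pi_{\eps}(B_{r_{(x,y)}}(x,y))\le\alpha_{(x,y)}e^{-\delta/\eps}$ for all $\eps\le\eps_{0}^{(x,y)}$.

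Then I would invoke compactness: the balls $\{B_{r_{(x,y)}}(x,y):(x,y)\in C\}$ cover $C$, so finitely many of them, say $B_{1},\dots,B_{N}$ with constants $\alpha_{1},\dots,\alpha_{N}$ and thresholds $\eps_{1},\dots,\eps_{N}$, already cover $C$. Put $\bar\alpha:=\sum_{j=1}^{N}\alpha_{j}$ and $\bar\eps:=\min_{j}\eps_{j}>0$. For $\eps\le\bar\eps$ we get $\pi_{\eps}(C)\le\sum_{j=1}^{N}\pi_{\eps}(B_{j})\le\bar\alpha\,e^{-\delta/\eps}$, hence $\eps\log\pi_{\eps}(C)\le\eps\log\bar\alpha-\delta$, and letting $\eps\to0$ gives $\limsup_{\eps\to0}\eps\log\pi_{\eps}(C)\le-\delta$. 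Since $\delta<\ell$ was arbitrary, the claim follows.

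I expect the main (and really the only) obstacle to be the bookkeeping at the level of the definition of~$I$: Lemma~\ref{le:expBoundWithSptPts} is phrased for a cyclic shift $y_{i}\mapsto y_{i+1}$, whereas~\eqref{eq:defI} allows an arbitrary permutation $\sigma\in\Sigma(k)$, so one must either decompose $\sigma$ into cycles and apply the lemma to an appropriate subcycle containing the index~$1$, or simply use~\eqref{eq:defI2} after noting (as in the proof of Lemma~\ref{le:defI}) that the points of $\Gamma$ can be relabeled freely. Either way this is notational rather than substantive. One should also double-check that the trivial bound $\pi_{\eps}(C)\le1$ correctly handles the case $\inf_{C}I=0$ (including $\inf_{C}I=+\infty$ over an empty $C$, where the statement is vacuous), and that no integrability or finiteness hypothesis on the transport problem is needed here, since Lemma~\ref{le:expBoundWithSptPts} only used the weak convergence $\pi_{\eps}\to\pi_{*}$ and continuity of~$c$.
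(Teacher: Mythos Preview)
Your proposal is correct and follows essentially the same route as the paper's proof: a pointwise application of Lemma~\ref{le:expBoundWithSptPts} followed by a compactness/finite-cover argument. The only cosmetic difference is that the paper handles the possibility $I(x,y)=\infty$ via a truncation $I_{\eta}:=I\wedge\eta^{-1}$ and a single parameter $\eta\to0$, whereas you separate the trivial case $\inf_{C}I=0$ and then approximate $\ell=\inf_{C}I$ from below by~$\delta$; both devices accomplish the same thing, and your remark on reducing a general permutation to a cyclic shift (via cycle decomposition and the $c$-cyclical monotonicity of~$\Gamma$ on the cycles not containing index~$1$) is exactly the bookkeeping that the paper leaves implicit.
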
 

\begin{proof}
  Fix $\eta>0$ and $(x,y)\in C$. By the definition of~$I(x,y)$ there are $k\geq 1$
  and $(x_{i},y_{i})_{i=2}^{k}\subset\Gamma$ such that
  $$
    \sum_{i=1}^{k} c(x_{i}, y_{i}) - \sum_{i=1}^{k} c(x_{i}, y_{i+1}) > I_{\eta}(x,y)-\eta/2,
  $$
  where  $(x_{1},y_{1}):=(x,y)$ and $I_{\eta}(x,y):= I(x,y)\wedge \eta^{-1}$. (The truncation is needed only if $I(x,y)=\infty$.)
  Lemma~\ref{le:expBoundWithSptPts} thus yields a ball $B_{r}(x,y)$ with
  \begin{equation}\label{eq:limsupProof}
    \limsup \eps \log \pi_{\eps}(B_{r}(x,y)) \leq - I_{\eta}(x,y) + \eta.
  \end{equation}
  This holds for every $(x,y)\in C$, and as~$C$ is covered by finitely many such balls, we deduce that
  $$
    \limsup \eps \log \pi_{\eps}(C) \leq -\inf_{(x,y)\in C} I_{\eta}(x,y) + \eta.
  $$
  Recalling that $\eta>0$ was arbitrary, the claim follows.
\end{proof} 

We note that the measure  $\pi_{*}=\lim_{\eps}\pi_{\eps}$ is not compactly supported in general. It is then an open problem how to relax the compactness condition in Corollary~\ref{co:limsup} and hence obtain a ``stronger'' version of the large deviations principle.

\subsection{Large Deviations Lower Bound}

Our next aim is to show that $I$ is also an upper bound for the large deviations rate, thus matching the bound in Corollary~\ref{co:limsup}. This will be accomplished in two slightly different settings and approaches. The dual approach expresses~$I$ as the gap~\eqref{eq:IandPotentials} between the cost~$c$ and the solution of the dual optimal transport problem, whereas the primal directly uses the definition~\eqref{eq:defI} of~$I$ and imposes regularity conditions. The results correspond to Theorem~\ref{th:mainIntro}\,(b) in the Introduction.

\subsubsection{Bound via Kantorovich Potential}
  
We start with the dual approach, first recalling some standard notions of optimal transport---we have tried to consistently use the notation of~\cite{Villani.09}. A proper function $\psi: \X\to(-\infty,\infty]$ is called $c$-convex if there exists some $\zeta: \Y\to[-\infty,\infty]$ such that $\psi(x)=\sup_{y\in\Y} [\zeta(y)-c(x,y)]$ for all $x\in\X$. Its $c$-conjugate is defined by $\psi^{c}(y):=\inf_{x\in\X} [\psi(x)+c(x,y)]$ for $y\in\Y$, and its $c$-subdifferential is
$$
  \partial_{c}\psi=\{(x,y)\in\X\times\Y:\, \psi^{c}(y) -\psi(x)=c(x,y)\}.
$$
Given a $c$-cyclically monotone set $\Gamma$, a $c$-convex function $\psi$ is called a Kantorovich potential if $\Gamma\subset\partial_{c}\psi$; that is, if $\psi^{c}(y) -\psi(x)=c(x,y)$ on $\Gamma$. This implies in particular that $\psi,\psi^{c}$ are finite on
$$
  \X_{0} := \proj_{\X}\Gamma, \quad \Y_{0} := \proj_{\Y}\Gamma.
$$
In the context of optimal transport, $\spt \pi\subset\partial_{c}\psi$ for some optimal $\pi\in\Pi(\mu,\nu)$ implies that $\partial_{c}\psi$ contains the support of any optimal transport. Indeed, $\partial_{c}\psi$ is a maximal $c$-monotone set for inclusion. In what follows, the cyclically monotone set of interest is $\Gamma=\spt\pi_{*}$, where $\pi_{*}$ is the limiting optimal transport~\eqref{eq:convOT}.

\begin{assumption}\label{as:uniquePotential}
  Uniqueness of Kantorovich potentials holds on~$\X_{0}$; that is, for any $c$-convex functions $\psi_{1},\psi_{2}$ on~$\X$ with $\Gamma\subset\partial_{c}\psi_{i}$, it holds that $\psi_{1}-\psi_{2}$ is constant on~$\X_{0}$.
\end{assumption}

This is often considered a fairly weak assumption, at least for differentiable cost functions, and we detail sufficient conditions in Proposition~\ref{pr:dualUniqueness} of Appendix~\ref{se:uniquenessOfPotentials}. However, we emphasize that connectedness of at least one marginal support is crucial (cf.\ Example~\ref{ex:2times2} below).

As announced, Assumption~\ref{as:uniquePotential} allows us to express~$I$ through the Kantorovich potential; see~\eqref{eq:IandPotentials}. For our present purpose, the key consequence is~\eqref{eq:Isum}. It is worth noting that~\eqref{eq:IandPotentials} also allows us to translate a large body of known results about $c$-convex functions, such as regularity results, into statements about~$I$. Finally, the gap~\eqref{eq:IandPotentials} also plays a role in the regularity theory of optimal transport maps (especially in~\cite{Loeper.09}), thus relating to the second approach in Section~\ref{se:upperBoundViaRegularity} below. 

\begin{proposition}\label{pr:dualIndep}
  Let Assumption~\ref{as:uniquePotential} hold. Then 
  \begin{equation}\label{eq:IandPotentials}
      I(x,y)=c(x,y)- \psi^{c}(y)+\psi(x), \quad (x,y)\in \X_{0}\times\Y_{0}
  \end{equation}
  for any Kantorovich potential~$\psi$. In particular, $I<\infty$ on $\X_{0}\times\Y_{0}$.
  If $(x,y),(x',y')\in\X_{0}\times\Y_{0}$ are such that $(x',y), (x,y')\in\Gamma$, then
\begin{align}\label{eq:Isum}
  I(x,y)+ I(x',y')= c(x,y)+ c(x',y')- c(x,y')-c(x',y). 
\end{align}
\end{proposition}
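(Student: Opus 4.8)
The plan is to establish \eqref{eq:IandPotentials} first and then deduce \eqref{eq:Isum} as a short consequence. For \eqref{eq:IandPotentials}, fix a Kantorovich potential $\psi$ (it exists, e.g., as the one built from the $c$-cyclically monotone set $\Gamma$ via Rockafellar-type construction, and is finite on $\X_0,\Y_0$ with $\psi^c(y)-\psi(x)=c(x,y)$ on $\Gamma$). Define $J(x,y):=c(x,y)-\psi^c(y)+\psi(x)$ for $(x,y)\in\X_0\times\Y_0$ and aim to show $I=J$ there. The inequality $I\le J$ is the telescoping direction: given any $(x_i,y_i)_{i=2}^k\subset\Gamma$ with $(x_1,y_1):=(x,y)$, use $c(x_i,y_i)=\psi^c(y_i)-\psi(x_i)$ for $i\ge 2$ and $c(x_i,y_{i+1})\ge \psi^c(y_{i+1})-\psi(x_i)$ (the $c$-subdifferential inequality $\psi(x_i)+c(x_i,y_{i+1})\ge\psi^c(y_{i+1})$, valid for all $i$ since indices are cyclic and all $y_j\in\Y_0$), to bound
\[
  \sum_{i=1}^k c(x_i,y_i)-\sum_{i=1}^k c(x_i,y_{i+1}) \le \big(c(x,y)+\psi(x)\big) - \psi^c(y) + \sum_{i=2}^k\big(\psi^c(y_i)-\psi^c(y_i)\big) = c(x,y)-\psi^c(y)+\psi(x),
\]
where the middle telescoping uses $y_{k+1}=y_1=y$; taking the supremum over $k$ and the points (and recalling from Lemma~\ref{le:defI} that for $x\in\X_0$ the permutation supremum equals the cyclic-shift supremum \eqref{eq:defI2}) gives $I(x,y)\le J(x,y)$.

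For the reverse inequality $I\ge J$, I would use Assumption~\ref{as:uniquePotential}. The idea: for $(x,y)\in\X_0\times\Y_0$, the quantity $I(x,\cdot)$ itself, viewed appropriately, should produce a Kantorovich potential. Concretely, since $x\in\X_0$ pick $(x,\hat y)\in\Gamma$ so $I(x,\hat y)=0$; more usefully, define $\psi_I(x'):= \sup\{\text{$c$-cyclic expressions anchored so as to isolate }\psi\}$ — i.e.\ run the standard Rockafellar construction of a Kantorovich potential but starting the chain at a point involving $x$. The cleanest route: from \eqref{eq:defI2}, for fixed $x\in\X_0$ the function $y\mapsto I(x,y)$ satisfies, for $(x,y)\in\X_0\times\Y_0$,
\[
  I(x,y) = c(x,y) + \sup_{k\ge 2,\ (x_i,y_i)_{i=2}^k\subset\Gamma}\Big[\sum_{i=2}^k c(x_i,y_i) - \sum_{i=2}^k c(x_i,y_{i+1}) - c(x_1,y_2)\Big],
\]
and the bracketed supremum, as a function of $x$ through $x_1=x$ and of the chain, reconstructs $-\psi^c(y)+\psi(x)+\text{const}$ for \emph{some} Kantorovich potential by the classical argument (the chain construction yields a $c$-convex function whose $c$-subdifferential contains $\Gamma$). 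By Assumption~\ref{as:uniquePotential} this potential differs from the given $\psi$ only by an additive constant on $\X_0$, and one checks the constant is $0$ by evaluating at a point of $\Gamma$ (where both sides vanish). Hence $I(x,y)\ge c(x,y)-\psi^c(y)+\psi(x)=J(x,y)$, completing \eqref{eq:IandPotentials}; finiteness of $I$ on $\X_0\times\Y_0$ follows since $\psi,\psi^c$ are finite there.

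Given \eqref{eq:IandPotentials}, the identity \eqref{eq:Isum} is immediate: if $(x,y),(x',y')\in\X_0\times\Y_0$ and $(x',y),(x,y')\in\Gamma$, then $\psi^c(y)-\psi(x')=c(x',y)$ and $\psi^c(y')-\psi(x)=c(x,y')$, so adding the two instances of \eqref{eq:IandPotentials},
\[
  I(x,y)+I(x',y') = c(x,y)+c(x',y') - \psi^c(y)-\psi^c(y') + \psi(x)+\psi(x') = c(x,y)+c(x',y') - c(x',y) - c(x,y'),
\]
as claimed. I expect the main obstacle to be the reverse inequality $I\ge J$: one must argue carefully that the supremum defining $I(x,\cdot)$ really does realize a bona fide Kantorovich potential (not merely a $c$-convex lower bound), so that Assumption~\ref{as:uniquePotential} can be invoked — this is where the precise bookkeeping of the cyclic chains and the use of Lemma~\ref{le:defI} to pass between \eqref{eq:defI} and \eqref{eq:defI2} matters, and where one must verify the additive constant is pinned down correctly by a base point in $\Gamma$.
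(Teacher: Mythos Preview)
Your proposal is correct and follows the same core idea as the paper: relate $I$ to a Rockafellar-type antiderivative and invoke Assumption~\ref{as:uniquePotential}. The organization differs. You split into $I\le J$ (telescoping with the $c$-subdifferential inequality, valid for any potential without the assumption) and $I\ge J$ (the bracketed supremum is a Rockafellar antiderivative, hence equals the given potential up to an additive constant). The paper instead establishes the equality in one stroke: for each $y\in\Y_0$ it fixes $(\bar x,y)\in\Gamma$ as the Rockafellar base point and shows by direct manipulation of the cyclic sum (using Lemma~\ref{le:defI}) that
\[
  \psi_{(\bar x,y)}(x)=c(\bar x,y)-c(x,y)+I(x,y),
\]
then uses the base-point independence of $\Psi_{(\bar x,y)}:=\psi_{(\bar x,y)}^{c}-\psi_{(\bar x,y)}$ (equivalent to Assumption~\ref{as:uniquePotential}) to conclude. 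This is exactly the computation your $I\ge J$ sketch alludes to but does not carry out; once written down it renders your separate $I\le J$ step unnecessary. Your derivation of~\eqref{eq:Isum} from~\eqref{eq:IandPotentials} coincides with the paper's. The one caveat you correctly flag---that the bracketed supremum must be identified with a \emph{bona fide} Kantorovich potential, with the base point and additive constant pinned down---is precisely what the paper's explicit choice $(\bar x,y)\in\Gamma$ resolves.
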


\begin{proof}
We first elaborate on Assumption~\ref{as:uniquePotential}. A particular family of Kantorovich potentials, sometimes called Rockafellar antiderivatives of $\Gamma$, is defined as follows (cf.\ \cite[Equation~(5.17), p.\,65]{Villani.09}): fix $(x_{0},y_{0})\in\Gamma$ and set
\begin{equation}\label{eq:antiderivative}
  \psi_{(x_{0},y_{0})}(x):= \sup_{k\geq1} \sup_{(x_{i},y_{i})_{i=1}^{k}\in\Gamma}  \sum_{i=0}^{k} [c(x_{i},y_{i}) - c(x_{i+1},y_{i})] , \quad\mbox{where }x_{k+1}:=x. 
\end{equation}
It then holds that $\psi_{(x_{0},y_{0})}(x)=0$ for $x=x_{0}$. Clearly Assumption~\ref{as:uniquePotential} implies that changing the reference point $(x_{0},y_{0})$ only changes this potential by a constant. In particular,
\begin{equation}\label{eq:Psi}
  \Psi_{(x_{0},y_{0})}(x,y):=\psi^{c}_{(x_{0},y_{0})}(y)-\psi_{(x_{0},y_{0})}(x), \quad (x,y)\in\X_{0}\times\Y_{0}
\end{equation}
does not depend on $(x_{0},y_{0})\in\Gamma$, and we may simply write $\Psi:=\Psi_{(x_{0},y_{0})}$. Indeed, under Assumption~\ref{as:uniquePotential}, $\Psi$ is even the same for any potential~$\psi$.

We now use this independence to prove the lemma. To avoid notational conflict, we first rewrite the definition~\eqref{eq:antiderivative} as
\begin{equation}\label{eq:antiderivativeRewrite}
  \psi_{(\bar{x},\bar{y})}(x)= \sup_{k\geq2} \sup_{(x_{i},y_{i})_{i=2}^{k}\in\Gamma}  c(\bar{x},\bar{y}) + \sum_{i=2}^{k} [c(x_{i},y_{i}) - c(x_{i+1},y_{i})] - c(x_{2},\bar{y}),
\end{equation}
where we have avoided the subscript $i=1$.
Fix $(x,y)\in \X_{0}\times\Y_{0}$. Writing $(x_{1},y_{1}):=(x,y)$ as in Lemma~\ref{le:defI}, the definition $x_{k+1}:=x$ of~\eqref{eq:antiderivative} becomes our usual cyclical convention $x_{k+1}=x_{1}$.
As $y\in\Y_{0}$, there exists $\bar{x}\in\X_{0}$ such that $(\bar{x},y)\in\Gamma$. Using~\eqref{eq:antiderivativeRewrite} with $\bar{y}:=y$ then yields
\begin{align*}
  \psi_{(\bar{x},y)}(x) 
  &= \sup_{k\geq2} \sup_{(x_{i},y_{i})_{i=2}^{k}\in\Gamma}  c(\bar{x},y) + \sum_{i=2}^{k} c(x_{i},y_{i}) - \sum_{i=2}^{k} c(x_{i+1},y_{i}) - c(x_{2},y) \\
  &= \sup_{k\geq2} \sup_{(x_{i},y_{i})_{i=2}^{k}\in\Gamma}  c(\bar{x},y) - c(x,y)+ \sum_{i=1}^{k} c(x_{i},y_{i}) - \sum_{i=1}^{k} c(x_{i+1},y_{i})\\
  &=  c(\bar{x},y) - c(x,y) + \sup_{k\geq2} \sup_{(x_{i},y_{i})_{i=2}^{k}\in\Gamma} \sum_{i=1}^{k} c(x_{i},y_{i}) - \sum_{i=1}^{k} c(x_{i},y_{i+1})\\
    & = c(\bar{x},y) - c(x,y) + I(x,y),
\end{align*}
where we have used the last part of Lemma~\ref{le:defI}. %
  In view of $\psi_{(\bar x,y)}(\bar x)=0$, the fact that $\Psi_{(\bar x,y)}=c$ on~$\Gamma$ shows in particular that $c(\bar x,y)=\psi^{c}_{(\bar x,y)}(y)$, and hence the preceding display yields
  $$
    I(x,y)=c(x,y)+\psi_{(\bar x,y)}(x) - \psi^{c}_{(\bar x,y)}(y)=c(x,y)-\Psi_{(\bar x,y)}(x,y).
  $$
  By the first part of the proof, $\Psi_{(\bar x,y)}(\cdot)=\Psi(\cdot)$ does not depend on~$(\bar x,y)$ and the above is precisely~\eqref{eq:IandPotentials}. 
  
  To see~\eqref{eq:Isum}, let $(x',y), (x,y')\in\Gamma$.
  Using that $I=0$ on $\Gamma$ by Lemma~\ref{le:defI} and then~\eqref{eq:IandPotentials},
  \begin{align*}
    I(x,y)+ I(x',y')&= I(x,y)+ I(x',y') - I(x,y')- I(x',y) \\
    &= c(x,y)+ c(x',y') - c(x,y')- c(x',y) \\
    & \quad-\Psi(x,y)- \Psi(x',y') + \Psi(x,y')+ \Psi(x',y),
  \end{align*}
  where the last line vanishes as $\Psi$ is a sum of marginal functions~\eqref{eq:Psi}. %
\end{proof}

\begin{remark}\label{rk:onDualIndepAssumption}
  The proof of Proposition~\ref{pr:dualIndep} is based on the condition that 
  \begin{equation}\label{eq:indepCond}
    \mbox{$\Psi_{(x_{0},y_{0})}$ of~\eqref{eq:Psi} does not depend on $(x_{0},y_{0})\in\Gamma$}, 
  \end{equation}
  which may seem weaker than Assumption~\ref{as:uniquePotential}. However, Assumption~\ref{as:uniquePotential} is in fact equivalent to~\eqref{eq:indepCond}; the proof is stated below. As a direct consequence, another equivalent condition is that the Rockafellar antiderivative~\eqref{eq:antiderivative} be independent of~$(x_{0},y_{0})$. The symmetry of~\eqref{eq:indepCond} shows that it is further equivalent to impose the analogue of Assumption~\ref{as:uniquePotential} on~$\Y$ instead of~$\X$.
\end{remark}

\begin{proof}[Proof that~\eqref{eq:indepCond} implies Assumption~\ref{as:uniquePotential}.]
  By construction, the Rockafellar antiderivative $\psi_{0}:=\psi_{(x_{0},y_{0})}$ of~\eqref{eq:antiderivative} has the minimality property $\psi_{0} \leq \xi$ on~$\X_{0}$ whenever~$\xi$ is a potential with $\xi(x_{0})=0=\psi_{0}(x_{0})$. (See \cite[p.\,62]{Villani.09}, or \cite{BartzReich.12} for a more general result and further context.) Consider another point~$(x_{1},y_{1})\in\Gamma$, let $\psi_{1}:=\psi_{(x_{1},y_{1})}$ and let~$\xi$ be any potential. Using the minimality twice,
  $$
    \psi_{0}(x_{1})-\psi_{0}(x_{0}) \leq \xi(x_{1}) - \xi(x_{0}) \leq \psi_{1}(x_{1})-\psi_{1}(x_{0}).
  $$
  Given~\eqref{eq:indepCond}, the right-hand side can be expressed as 
  \begin{align*}
    \psi_{1}(x_{1})-\psi_{1}(x_{0})
     & = \psi_{1}(x_{1}) - \psi_{1}^{c}(y_{0})- \psi_{1}(x_{0}) + \psi_{1}^{c}(y_{0}) \\
     & = \psi_{0}(x_{1}) - \psi_{0}^{c}(y_{0})- \psi_{0}(x_{0}) + \psi_{0}^{c}(y_{0}) \\
     & = \psi_{0}(x_{1})- \psi_{0}(x_{0}),
  \end{align*} 
  which is the left-hand side. It follows that $\psi_{0}(x_{1})-\psi_{0}(x_{0}) = \xi(x_{1}) - \xi(x_{0})$ for any potential~$\xi$, and as $x_{0},x_{1}\in\X_{0}$ were arbitrary, Assumption~\ref{as:uniquePotential} holds.
\end{proof}

We can now show the large deviations lower bound.

\begin{corollary}\label{co:liminf}
  Let Assumption~\ref{as:uniquePotential} hold. For any open set $U\subset \X_{0}\times\Y_{0}$,
  $$
    \liminf_{\eps\to0} \eps \log \pi_{\eps}(U) \geq -\inf_{(x,y)\in U} I(x,y).
  $$ 
\end{corollary}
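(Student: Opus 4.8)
The plan is to prove the bound locally and then optimise over points of $U$: for each $(x,y)\in U$ I will exhibit a small product neighbourhood $V\times W\subseteq U$ with $\liminf_{\eps\to0}\eps\log\pi_{\eps}(V\times W)\geq -I(x,y)$, whence, since $V\times W\subseteq U$, $\liminf_{\eps\to0}\eps\log\pi_{\eps}(U)\geq\sup_{(x,y)\in U}(-I(x,y))=-\inf_{(x,y)\in U}I(x,y)$. Fix $(x,y)\in U$. As $U\subseteq\X_{0}\times\Y_{0}$ we may choose $x'\in\X_{0}$ and $y'\in\Y_{0}$ with $(x',y)\in\Gamma$ and $(x,y')\in\Gamma$; then $(x',y')\in\X_{0}\times\Y_{0}$, so Proposition~\ref{pr:dualIndep} applies and identity~\eqref{eq:Isum} gives
$$
  \delta_{0}:=c(x,y)+c(x',y')-c(x,y')-c(x',y)=I(x,y)+I(x',y')\in[0,\infty).
$$

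Assume first $\delta_{0}>0$; fix $\eta\in(0,\delta_{0})$. By continuity of $c$ choose open neighbourhoods $V\ni x$, $W\ni y$, $V'\ni x'$, $W'\ni y'$ small enough that $V\times W\subseteq U$ and that $c(\tilde x_{1},\tilde y_{1})+c(\tilde x_{2},\tilde y_{2})-c(\tilde x_{1},\tilde y_{2})-c(\tilde x_{2},\tilde y_{1})\in[\delta_{0}-\eta,\,\delta_{0}+\eta]$ whenever $(\tilde x_{1},\tilde y_{1})\in V\times W$ and $(\tilde x_{2},\tilde y_{2})\in V'\times W'$. Taking $k=2$ and $A:=(V\times W)\times(V'\times W')$, we have $A\subseteq A_{2}(\delta_{0}-\eta,\delta_{0}+\eta)$ in the notation of Lemma~\ref{le:expBound}, with associated set $\bar A=(V\times W')\times(V'\times W)$. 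Since $(x,y'),(x',y)\in\Gamma=\spt\pi_{*}$, the sets $V\times W'$ and $V'\times W$ are open neighbourhoods of support points of $\pi_{*}$; by weak convergence and the portmanteau theorem $\liminf_{\eps\to0}\pi_{\eps}(V\times W')\geq\pi_{*}(V\times W')>0$ and $\liminf_{\eps\to0}\pi_{\eps}(V'\times W)>0$, so $\pi_{\eps}^{2}(\bar A)=\pi_{\eps}(V\times W')\pi_{\eps}(V'\times W)$ stays bounded away from $0$ and hence $\eps\log\pi_{\eps}^{2}(\bar A)\to0$. The reverse estimate~\eqref{eq:expBoundReverse} of Lemma~\ref{le:expBound} then yields
$$
  \liminf_{\eps\to0}\Bigl(\eps\log\pi_{\eps}(V\times W)+\eps\log\pi_{\eps}(V'\times W')\Bigr)=\liminf_{\eps\to0}\eps\log\pi_{\eps}^{2}(A)\geq-(\delta_{0}+\eta).
$$

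To isolate the first summand I would bound $\eps\log\pi_{\eps}(V'\times W')$ from above exactly as in the proof of Corollary~\ref{co:limsup}: applying Lemma~\ref{le:expBoundWithSptPts} at the point $(x',y')$ with supporting points drawn from $\Gamma$ realising a value of~\eqref{eq:defI2} close to $I(x',y')$ (finite by Proposition~\ref{pr:dualIndep}, and realised in the required sense by Lemma~\ref{le:defI} when positive), one obtains, after shrinking $V',W'$ if necessary, $\limsup_{\eps\to0}\eps\log\pi_{\eps}(V'\times W')\leq -I(x',y')+\eta$. The left-hand side of the previous display is bounded below by $-(\delta_{0}+\eta)$, hence so are both summands, and subtracting the upper bound gives
$$
  \liminf_{\eps\to0}\eps\log\pi_{\eps}(V\times W)\geq-(\delta_{0}+\eta)-\bigl(-I(x',y')+\eta\bigr)=-I(x,y)-2\eta,
$$
using $\delta_{0}=I(x,y)+I(x',y')$; letting $\eta\downarrow0$ gives $\liminf_{\eps\to0}\eps\log\pi_{\eps}(U)\geq-I(x,y)$. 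When instead $I(x,y)=0$, the required bound $\liminf_{\eps\to0}\eps\log\pi_{\eps}(U)\geq0$ is immediate from weak convergence if $(x,y)\in\spt\pi_{*}$ (then $\pi_{*}(U)>0$), and otherwise follows by applying the previous paragraph at points of $U$ with $I$ arbitrarily small together with the lower semicontinuity of $I$. Taking the supremum over $(x,y)\in U$ completes the argument.

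I expect the main obstacle to be arranging the ``reverse'' half of Lemma~\ref{le:expBound} so that the shuffled configuration $\bar A$ remains concentrated near $\pi_{*}$: for a general $k$-cycle the shuffled pairs $(x_{i},y_{i+1})$ need not lie in $\Gamma$, so $\pi_{\eps}^{k}(\bar A)$ could itself decay exponentially and~\eqref{eq:expBoundReverse} would be vacuous. The role of Assumption~\ref{as:uniquePotential} (via Proposition~\ref{pr:dualIndep} and~\eqref{eq:Isum}) is precisely to collapse the optimisation defining $I(x,y)$ onto a single $2$-cycle through two $\Gamma$-points $(x',y)$ and $(x,y')$, for which the shuffled configuration consists of exactly those two $\Gamma$-points and therefore retains non-vanishing $\pi_{\eps}$-mass; the contribution of the auxiliary point $(x',y')$ is then cancelled using $\delta_{0}=I(x,y)+I(x',y')$.
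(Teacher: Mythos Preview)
Your approach is essentially the same as the paper's: pick an auxiliary point $(x',y')$ with $(x',y),(x,y')\in\Gamma$, use the reverse estimate~\eqref{eq:expBoundReverse} of Lemma~\ref{le:expBound} on the $2$-cycle through $(x,y)$ and $(x',y')$ (where the shuffled set $\bar A$ has non-vanishing mass because it is a neighbourhood of $\Gamma$-points), and then subtract the large deviations upper bound at $(x',y')$ obtained from Lemma~\ref{le:expBoundWithSptPts}, cancelling the auxiliary contribution via~\eqref{eq:Isum}.

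One genuine gap: your treatment of the case $\delta_{0}=0$ (equivalently $I(x,y)=I(x',y')=0$) does not work. If $(x,y)\notin\spt\pi_{*}$ but $I(x,y)=0$, you appeal to ``points of $U$ with $I$ arbitrarily small together with the lower semicontinuity of $I$''. Lower semicontinuity says that $I$ cannot jump \emph{down} along approximating sequences; it gives no mechanism to produce nearby points in $U$ with strictly positive (hence usable) $\delta_{0}$, so this step is unjustified. The easy fix is to observe that your restriction $\eta\in(0,\delta_{0})$ is unnecessary: the reverse inequality in the proof of Lemma~\ref{le:expBound} uses only the upper bound $\delta'$ on the cost difference, so one may take any $\eta>0$, set $\delta'=\delta_{0}+\eta$, and obtain $\pi_{\eps}^{2}(A)\geq e^{-\delta'/\eps}\pi_{\eps}^{2}(\bar A)$ regardless of whether the cost difference stays nonnegative on $A$. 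With this change the main argument already covers $\delta_{0}=0$ and no separate case is needed; this is exactly how the paper proceeds (it applies Lemma~\ref{le:expBound} with $\delta=0$ and $\delta'=\alpha+\eta/2$ without a case split). The remark ``hence so are both summands'' is correct because both $\eps\log\pi_{\eps}(\cdot)$ terms are nonpositive, but you should make that reason explicit.
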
  

\begin{proof}
It suffices to show that given $(x,y)\in U$ and $\eta>0$, there exists $r_0>0$ such that for all $r<r_0$,
 \begin{align*}%
   \limsup_{\eps\to 0} - \eps \log \pi_{\eps}(B_{r}(x,y))\leq I(x,y) + \eta.
 \end{align*}
Let $\eta>0$, pick any $(x',y')\in\X_{0}\times\Y_{0}$ such that $(x',y),(x,y')\in\Gamma$, and set
$$
  \alpha:=c(x,y)+ c(x', y')- c(x,y')-c(x',y).
$$
We have $I(x,y)<\infty$ and $I(x',y')<\infty$ by Proposition~\ref{pr:dualIndep}.
For $r>0$ small enough we may use Lemma~\ref{le:expBound} with $\delta':=\alpha + \eta/2$  and $B_{r}(x,y)\times B_{r}(x',y')\subset A_{2}(0,\delta')$ to obtain 
  \begin{align}
\limsup \, & -\eps \left[ \log \pi_{\eps}(B_{r}(x,y))+ \log \pi_{\eps}(B_{r}(x',y')\right]\nonumber\\
&= \limsup - \eps \log \pi_{\eps}^{2}\big(B_{r}(x,y)\times B_{r}(x',y')\big) \nonumber\\
&\leq \alpha + \eta/2. \label{eq:proofLimInf0}
\end{align} 
On the other hand, for $r$ small enough, Lemma~\ref{le:expBoundWithSptPts} yields as in~\eqref{eq:limsupProof} that
  \begin{align}
    \liminf -\eps \log \pi_{\eps}(B_{r}(x',y')) & \geq I(x',y') - \eta/2. \label{eq:proofLimInf2}
  \end{align} 
  Using~\eqref{eq:proofLimInf0}, then~\eqref{eq:Isum} and finally~\eqref{eq:proofLimInf2},
  \begin{align*}
 \limsup &-\eps \log \pi_{\eps}(B_{r}(x,y)) + \liminf -\eps \log \pi_{\eps}(B_{r}(x',y'))\\
    & \leq \limsup \, -\eps \left[ \log \pi_{\eps}(B_{r}(x,y))+ \log \pi_{\eps}(B_{r}(x',y')\right] \\
    &\leq \alpha+ \eta/2 \\
    &= I(x,y) + I(x',y') + \eta/2 \\
    & \leq I(x,y) + \liminf -\eps \log \pi_{\eps}(B_{r}(x',y')) + \eta %
\end{align*}
  and the claim follows. 
\end{proof}

The following simple example shows that if both marginals supports are disconnected (and Assumption~\ref{as:uniquePotential} is violated), $I$ may fail to be an upper bound for the rate function.

\begin{example}[Disconnected Supports]\label{ex:2times2}
Consider the normalized $2\times 2$ assignment problem: $\X=\Y=\{1,2\}$ and $\mu=\nu=(\delta_{\{1\}}+\delta_{\{2\}})/2$. Here $\Pi(\mu,\nu)$ is the convex hull of the two couplings
$$
  \pi_{*}=(\delta_{\{(1,1)\}}+\delta_{\{(2,2)\}})/2, \quad \pi_{0}=(\delta_{\{(1,2)\}}+\delta_{\{(2,1)\}})/2.
$$
In particular, every $\pi\in\Pi(\mu,\nu)$ is symmetric: $\pi\{(1,2)\}=\pi\{(2,1)\}$. Consider  a cost function~$c$ with $c(1,1)=c(2,2)=0$ and $c(1,2)+c(2,1)>0$. Then $\pi_{*}$ is the unique optimal transport and we know that $\pi_{\eps}\to\pi_{*}$. Let 
\begin{equation}\label{eq:discreteExTrueRate}
  r(i,j):=\lim_{\eps\to0} -\eps\log \pi_{\eps}(\{i,j\})
\end{equation}
be the exponential rate of convergence. Using Lemma~\ref{le:expBound} with 
$$
  A=\{(1,2),(2,1)\}\subset A_{2}(\delta,\delta)
$$
for $\delta:=c(1,2)+c(2,1)>0$ shows $r(1,2)+r(2,1) = \delta$. As $\pi_{\eps}$ must be symmetric, we conclude that the true exponential rate  is
$$
  r(1,2)=r(2,1) = \delta/2.
$$
(A priori, it may not be obvious that the limit~\eqref{eq:discreteExTrueRate} exists, but a posteriori, this is justified as every subsequential limit leads to the same value.)
On the other hand, the definition~\eqref{eq:defI} of~$I$ readily yields that
$I \equiv 0$.
\end{example}

\subsubsection{Bound via Regularity}\label{se:upperBoundViaRegularity}

In the remainder of the section we present an alternative approach to the large deviations lower bound which does not (directly) refer to potentials but instead employs a continuity condition for the limiting optimal transport~$\pi_{*}$. We call a subset of a metric space arcwise connected if any two points are connected by a continuous curve of finite length.

\begin{assumption}\label{as:crossContinuity}
 (a) $\Gamma=\graph T$ for a map $T: \X_{0}\to\Y$.

 (b) $\X_{0}$ is arcwise connected.
 
 (c) The function $c(\cdot, T(\cdot))$ has the following continuity property: given a compact $K\subset\X_{0}$, we have uniformly over $x_{1},x_{2}\in K$ that
\begin{equation}\label{eq:crossContinuity}
  |c(x_{1},T(x_{1}))+ c(x_{2},T(x_{2}))-c(x_{1},T(x_{2}))- c(x_{2},T(x_{1}))| = o(d(x_{1},x_{2})).
\end{equation}
\end{assumption}

As an example, consider $\X=\Y=\R^{d}$ with cost $c(x,y)=\|x-y\|^{2}/2$ and an optimal transport $\pi$ given by a continuous transport map $T$ on the arcwise connected support $\spt\mu$. Then Assumption~\ref{as:crossContinuity} holds with $\X_{0}=\spt\mu$, as~\eqref{eq:crossContinuity} equals
$$
  |\br{x_{1}-x_{2},T(x_{1})-T(x_{2})}| \leq \|x_{1}-x_{2}\|\|T(x_{1})-T(x_{2})\|
$$
and $T$ is uniformly continuous on compact sets. General sufficient conditions for the continuity of~$T$ can be found in~\cite[Theorem~1]{CorderoFigalli.19}.

Next, we show how to establish the key half of~\eqref{eq:Isum} under Assumption~\ref{as:crossContinuity}.

\begin{lemma}\label{le:sumI2}
  Let Assumption~\ref{as:crossContinuity} hold. If $(x,y),(x',y')\in\X_{0}\times\Y_{0}$ are such that $(x',y), (x,y')\in\Gamma$, then
\begin{align}\label{eq:Isum2}
  I(x,y)+ I(x',y') \geq c(x,y)+ c(x',y')- c(x,y')-c(x',y). 
\end{align}
\end{lemma}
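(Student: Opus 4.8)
The plan is to build up a long cycle of points whose cost telescopes to the desired quantity, using the arcwise connectedness of $\X_0$ to interpolate between $x$ and $x'$ along a curve, and then to control the accumulated error using the asymptotic cross-continuity~\eqref{eq:crossContinuity}. The starting observation is that, by the lower bound~\eqref{eq:defI2} in Lemma~\ref{le:defI} (which applies since $x,x'\in\X_0$), it suffices to exhibit, for every $\eta>0$, finitely many points $(x_i,y_i)_{i=2}^{k}\subset\Gamma$ and $(x_i',y_i')_{i=2}^{k'}\subset\Gamma$ such that
\[
  \Big[c(x,y)+\sum_{i=2}^{k} c(x_i,y_i) - \sum_{i=1}^{k} c(x_i,y_{i+1})\Big]
  + \Big[c(x',y')+\sum_{i=2}^{k'} c(x_i',y_i') - \sum_{i=1}^{k'} c(x_i',y_{i+1}')\Big]
  \geq c(x,y)+c(x',y')-c(x,y')-c(x',y)-\eta,
\]
with the usual conventions $(x_1,y_1)=(x,y)$, $(x_1',y_1')=(x',y')$ and cyclic indices. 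In fact it is cleaner to construct a single cycle realizing most of $I(x,y)$ whose ``remainder'' after cancellation with $I(x',y')$ (whose value is $\geq 0$) is the right-hand side minus $\eta$.

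The key step is the interpolation. Fix a rectifiable curve $\gamma:[0,1]\to\X_0$ from $x=\gamma(0)$ to $x'=\gamma(1)$, and a partition $0=t_0<t_1<\dots<t_N=1$; write $z_j=\gamma(t_j)$ and let $w_j=T(z_j)\in\Y_0$, so $(z_j,w_j)\in\Gamma$. Consider the cycle built from $(x,y)$ together with the points $(z_1,w_1),\dots,(z_{N-1},w_{N-1}),(x',y)$ — i.e.\ we land at $(x',y)$, which lies in $\Gamma$ by hypothesis. The telescoping sum $\sum_j [c(z_j,w_j)-c(z_{j+1},w_j)]$ (with the appropriate endpoint terms $c(x,y)-c(z_1,y)$ at the start and $\dots-c(x',w_{N-1})$ adjustments) is, up to the cross-difference terms, a discretization of the increment of the Rockafellar-type antiderivative, and by the cross-continuity~\eqref{eq:crossContinuity} applied on the compact set $K=\gamma([0,1])$, the deviation of each consecutive pair from being ``cost-additive'' is $o(d(z_j,z_{j+1}))$; summing over a partition of mesh $\to 0$, these errors total $o(\text{length}(\gamma))$... more precisely, refining the partition drives the total error below $\eta$. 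Concatenating this chain from $(x,y)$ to $(x',y)$ with the single pair $(x,y')\in\Gamma$ closing the $(x',y')$-cycle, the cost terms rearrange into exactly $c(x,y)+c(x',y')-c(x,y')-c(x',y)$ plus an error controlled by $\eta$, and this quantity is a lower bound for $I(x,y)+I(x',y')$ via~\eqref{eq:defI2} applied to each.

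The main obstacle will be the bookkeeping of the telescoping cost sums and verifying that the $o(d(x_1,x_2))$ error in~\eqref{eq:crossContinuity}, which is pointwise-in-pairs, genuinely sums up to something small over a fine partition — this requires the uniform-over-$K$ phrasing of~\eqref{eq:crossContinuity}, so that one fixed modulus governs all consecutive pairs, and then a standard argument: given $\eta>0$ choose the mesh small enough that $|\,\cdot\,|\leq \eta\, d(z_j,z_{j+1})$ for every consecutive pair, whence the sum is $\leq \eta\sum_j d(z_j,z_{j+1})\leq \eta\cdot\mathrm{length}(\gamma)$. A secondary subtlety is that the intermediate points $z_j=\gamma(t_j)$ need to lie in $\X_0$ (guaranteed since $\gamma$ maps into $\X_0$) and that $T$ is defined there (Assumption~\ref{as:crossContinuity}(a)); no continuity of $T$ itself is needed, only the cross-continuity of the cost composed with $T$, which is exactly what~\eqref{eq:crossContinuity} provides. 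Once the chain is assembled, the identity $I(x,y)+I(x',y')\geq(\text{chain cost difference})\geq c(x,y)+c(x',y')-c(x,y')-c(x',y)-\eta$ follows, and letting $\eta\downarrow 0$ gives~\eqref{eq:Isum2}.
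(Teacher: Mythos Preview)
Your high-level strategy is the right one and matches the paper: interpolate between $x$ and $x'$ along a rectifiable curve in $\X_0$, insert the graph points $(z_j,T(z_j))$ into the chains defining~$I$, and control the resulting error by the uniform $o(d(\cdot,\cdot))$ bound from~\eqref{eq:crossContinuity} summed over a fine partition. Your paragraph on why the errors sum to at most $\eta\cdot\mathrm{length}(\gamma)$ is exactly the correct use of the uniformity hypothesis.

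The gap is in the combinatorics of the chain. You propose to build \emph{one} cycle for $I(x,y)$ terminating at $(x',y)\in\Gamma$, and to ``close the $(x',y')$-cycle'' with the single pair $(x,y')$, effectively using $I(x',y')\geq 0$. This does not work: a single chain contributes only two-term differences of the shape $c(z_j,w_j)-c(z_j,w_{j+1})$ (or the shifted variant), and~\eqref{eq:crossContinuity} says nothing about those individually---it controls the four-term symmetric combination $c(z_j,w_j)+c(z_{j+1},w_{j+1})-c(z_j,w_{j+1})-c(z_{j+1},w_j)$. Your passing remark that the one-sided sum is ``a discretization of the increment of the Rockafellar-type antiderivative'' is exactly the problem: that antiderivative is only path-independent under Assumption~\ref{as:uniquePotential}, which is not in force here, so the one-sided sum has no reason to converge to what you need.

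What is missing is a symmetrization. The paper builds \emph{two} chains of the same length~$k$: one for $I(x,y)$ with $x_k:=x'$, and a mirror chain for $I(x',y')$ with $x'_i:=x_{k-i+1}$ (so $x'_k=x$). When you add the two lower bounds, the first terms give $c(x,y)+c(x',y')$ and the closing terms give $-c(x',y)-c(x,y')$, while the interior one-sided differences pair up into precisely the four-term cross-differences $\Xi_k=\sum_{i}[c(x_i,T(x_i))+c(x_{i+1},T(x_{i+1}))-c(x_i,T(x_{i+1}))-c(x_{i+1},T(x_i))]$. Only at this point does~\eqref{eq:crossContinuity} apply, giving $|\Xi_k|\leq (k-1)\,o(C/(k-1))\to 0$ along the curve partition. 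In short, you need both chains, traversed in opposite directions, so that any path-dependence cancels; with that adjustment your argument goes through.
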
 

\begin{proof}
  Set $(x_{1},y_{1}):=(x,y)$ and $(x'_{1},y'_{1}):=(x',y')$. Let $k\geq 2$ and consider arbitrary $(x_{i},y_{i}), (x'_{i},y'_{i})\in\Gamma$ for $2\leq i\leq k$. The definition of~$I$ yields that
  \begin{align*}
      I(x,y)+ I(x',y')\geq \sum_{i=1}^{k} [c(x_{i},y_{i}) + c(x'_{i},y'_{i})] - \sum_{i=1}^{k} [c(x_{i},y_{i+1}) + c(x'_{i},y'_{i+1})].
  \end{align*} 
  This holds in particular for the choices $x_{k}:=x'$ and $x'_{k}:=x$, which entail that $y_{k}=T(x')=y$ and $y'_{k}=T(x)=y'$. Moreover, we have $y_{i}=T(x_{i})$ and $y'_{i}=T(x'_{i})$ for $i\geq2$. Separating the first term of the first sum and the last term of the second sum, we obtain that
  \begin{align*}
      I(x,y)+ I(x',y') &\geq c(x,y)+ c(x',y')- c(x,y')-c(x',y)\\
       &\quad  + \sum_{i=2}^{k} [c(x_{i},y_{i}) + c(x'_{i},y'_{i})] - \sum_{i=1}^{k-1} [c(x_{i},y_{i+1}) + c(x'_{i},y'_{i+1})].
  \end{align*} 
  We further choose $x'_{i}:=x_{k-i+1}$ for $i=2,\dots,k-1$, which implies $y'_{i}=y_{k-i+1}$ for $i=2,\dots,k-1$. Then the first sum can be rearranged as
  \begin{align}\label{eq:firstSum}
  \sum_{i=2}^{k} c(x_{i},y_{i}) + c(x'_{i},y'_{i})
  =\sum_{i=1}^{k-1} c(x_{i},T(x_{i}))+ c(x_{i+1},T(x_{i+1}))
  \end{align}
  and the second sum can be rearranged as
  \begin{align}\label{eq:secondSum}
  \sum_{i=1}^{k-1} c(x_{i},y_{i+1}) + c(x'_{i},y'_{i+1})
  =\sum_{i=1}^{k-1} c(x_{i},T(x_{i+1}))+ c(x_{i+1},T(x_{i}));
  \end{align}
  (These rearrangements are elementary if tedious; Figure~\ref{fi:sums} may be helpful to complete them.)
\begin{figure}[tb]
\begin{center}
\resizebox{\textwidth}{!}{

\tikzset{every picture/.style={line width=0.75pt}} %

\begin{tikzpicture}[x=0.75pt,y=0.75pt,yscale=-1,xscale=1]
\draw    (25.5,110.13) -- (295.94,110.13) ;
\draw    (25.5,220.45) -- (295.94,220.45) ;
\draw [fill={rgb, 255:red, 255; green, 255; blue, 255 }  ,fill opacity=1 ] [dash pattern={on 4.5pt off 4.5pt}]  (59.74,110) -- (59.74,148) -- (59.74,220.26) ;
\draw [shift={(59.74,220.26)}, rotate = 90] [color={rgb, 255:red, 0; green, 0; blue, 0 }  ][fill={rgb, 255:red, 0; green, 0; blue, 0 }  ][line width=0.75]      (0, 0) circle [x radius= 3.35, y radius= 3.35]   ;
\draw [shift={(59.74,110)}, rotate = 90] [color={rgb, 255:red, 0; green, 0; blue, 0 }  ][fill={rgb, 255:red, 0; green, 0; blue, 0 }  ][line width=0.75]      (0, 0) circle [x radius= 3.35, y radius= 3.35]   ;
\draw [fill={rgb, 255:red, 255; green, 255; blue, 255 }  ,fill opacity=1 ] [dash pattern={on 4.5pt off 4.5pt}]  (111.09,110.39) -- (111.09,148.39) -- (111.09,220.65)(108.09,110.39) -- (108.09,148.39) -- (108.09,220.65) ;
\draw [shift={(109.59,220.65)}, rotate = 90] [color={rgb, 255:red, 0; green, 0; blue, 0 }  ][fill={rgb, 255:red, 0; green, 0; blue, 0 }  ][line width=0.75]      (0, 0) circle [x radius= 3.35, y radius= 3.35]   ;
\draw [shift={(109.59,110.39)}, rotate = 90] [color={rgb, 255:red, 0; green, 0; blue, 0 }  ][fill={rgb, 255:red, 0; green, 0; blue, 0 }  ][line width=0.75]      (0, 0) circle [x radius= 3.35, y radius= 3.35]   ;
\draw [fill={rgb, 255:red, 255; green, 255; blue, 255 }  ,fill opacity=1 ] [dash pattern={on 4.5pt off 4.5pt}]  (160.94,109.79) -- (160.94,147.79) -- (160.94,220.05)(157.94,109.79) -- (157.94,147.79) -- (157.94,220.05) ;
\draw [shift={(159.44,220.05)}, rotate = 90] [color={rgb, 255:red, 0; green, 0; blue, 0 }  ][fill={rgb, 255:red, 0; green, 0; blue, 0 }  ][line width=0.75]      (0, 0) circle [x radius= 3.35, y radius= 3.35]   ;
\draw [shift={(159.44,109.79)}, rotate = 90] [color={rgb, 255:red, 0; green, 0; blue, 0 }  ][fill={rgb, 255:red, 0; green, 0; blue, 0 }  ][line width=0.75]      (0, 0) circle [x radius= 3.35, y radius= 3.35]   ;
\draw [fill={rgb, 255:red, 255; green, 255; blue, 255 }  ,fill opacity=1 ] [dash pattern={on 4.5pt off 4.5pt}]  (210.79,110) -- (210.79,148) -- (210.79,220.26)(207.79,110) -- (207.79,148) -- (207.79,220.26) ;
\draw [shift={(209.29,220.26)}, rotate = 90] [color={rgb, 255:red, 0; green, 0; blue, 0 }  ][fill={rgb, 255:red, 0; green, 0; blue, 0 }  ][line width=0.75]      (0, 0) circle [x radius= 3.35, y radius= 3.35]   ;
\draw [shift={(209.29,110)}, rotate = 90] [color={rgb, 255:red, 0; green, 0; blue, 0 }  ][fill={rgb, 255:red, 0; green, 0; blue, 0 }  ][line width=0.75]      (0, 0) circle [x radius= 3.35, y radius= 3.35]   ;
\draw [fill={rgb, 255:red, 255; green, 255; blue, 255 }  ,fill opacity=1 ] [dash pattern={on 4.5pt off 4.5pt}]  (259.15,109.39) -- (259.15,147.39) -- (259.15,219.65) ;
\draw [shift={(259.15,219.65)}, rotate = 90] [color={rgb, 255:red, 0; green, 0; blue, 0 }  ][fill={rgb, 255:red, 0; green, 0; blue, 0 }  ][line width=0.75]      (0, 0) circle [x radius= 3.35, y radius= 3.35]   ;
\draw [shift={(259.15,109.39)}, rotate = 90] [color={rgb, 255:red, 0; green, 0; blue, 0 }  ][fill={rgb, 255:red, 0; green, 0; blue, 0 }  ][line width=0.75]      (0, 0) circle [x radius= 3.35, y radius= 3.35]   ;
\draw    (363.06,109.74) -- (633.5,109.74) ;
\draw    (363.06,220.05) -- (633.5,220.05) ;
\draw  [dash pattern={on 4.5pt off 4.5pt}]  (447.15,110) -- (397.3,219.86) ;
\draw [shift={(397.3,219.86)}, rotate = 114.41] [color={rgb, 255:red, 0; green, 0; blue, 0 }  ][fill={rgb, 255:red, 0; green, 0; blue, 0 }  ][line width=0.75]      (0, 0) circle [x radius= 3.35, y radius= 3.35]   ;
\draw [shift={(447.15,110)}, rotate = 114.41] [color={rgb, 255:red, 0; green, 0; blue, 0 }  ][fill={rgb, 255:red, 0; green, 0; blue, 0 }  ][line width=0.75]      (0, 0) circle [x radius= 3.35, y radius= 3.35]   ;
\draw  [dash pattern={on 4.5pt off 4.5pt}]  (397.3,109.61) -- (447.15,220.26) ;
\draw [shift={(397.3,109.61)}, rotate = 65.75] [color={rgb, 255:red, 0; green, 0; blue, 0 }  ][fill={rgb, 255:red, 0; green, 0; blue, 0 }  ][line width=0.75]      (0, 0) circle [x radius= 3.35, y radius= 3.35]   ;
\draw  [dash pattern={on 4.5pt off 4.5pt}]  (497,110.39) -- (447.15,220.26) ;
\draw [shift={(447.15,220.26)}, rotate = 114.41] [color={rgb, 255:red, 0; green, 0; blue, 0 }  ][fill={rgb, 255:red, 0; green, 0; blue, 0 }  ][line width=0.75]      (0, 0) circle [x radius= 3.35, y radius= 3.35]   ;
\draw  [dash pattern={on 4.5pt off 4.5pt}]  (447.15,110) -- (497,220.65) ;
\draw [shift={(447.15,110)}, rotate = 65.75] [color={rgb, 255:red, 0; green, 0; blue, 0 }  ][fill={rgb, 255:red, 0; green, 0; blue, 0 }  ][line width=0.75]      (0, 0) circle [x radius= 3.35, y radius= 3.35]   ;
\draw  [dash pattern={on 4.5pt off 4.5pt}]  (546.61,110) -- (496.76,219.86) ;
\draw [shift={(496.76,219.86)}, rotate = 114.41] [color={rgb, 255:red, 0; green, 0; blue, 0 }  ][fill={rgb, 255:red, 0; green, 0; blue, 0 }  ][line width=0.75]      (0, 0) circle [x radius= 3.35, y radius= 3.35]   ;
\draw [shift={(546.61,110)}, rotate = 114.41] [color={rgb, 255:red, 0; green, 0; blue, 0 }  ][fill={rgb, 255:red, 0; green, 0; blue, 0 }  ][line width=0.75]      (0, 0) circle [x radius= 3.35, y radius= 3.35]   ;
\draw  [dash pattern={on 4.5pt off 4.5pt}]  (496.76,109.61) -- (546.61,220.26) ;
\draw [shift={(496.76,109.61)}, rotate = 65.75] [color={rgb, 255:red, 0; green, 0; blue, 0 }  ][fill={rgb, 255:red, 0; green, 0; blue, 0 }  ][line width=0.75]      (0, 0) circle [x radius= 3.35, y radius= 3.35]   ;
\draw  [dash pattern={on 4.5pt off 4.5pt}]  (596.46,110.39) -- (546.61,220.26) ;
\draw [shift={(546.61,220.26)}, rotate = 114.41] [color={rgb, 255:red, 0; green, 0; blue, 0 }  ][fill={rgb, 255:red, 0; green, 0; blue, 0 }  ][line width=0.75]      (0, 0) circle [x radius= 3.35, y radius= 3.35]   ;
\draw [shift={(596.46,110.39)}, rotate = 114.41] [color={rgb, 255:red, 0; green, 0; blue, 0 }  ][fill={rgb, 255:red, 0; green, 0; blue, 0 }  ][line width=0.75]      (0, 0) circle [x radius= 3.35, y radius= 3.35]   ;
\draw  [dash pattern={on 4.5pt off 4.5pt}]  (546.61,110) -- (596.46,220.65) ;
\draw [shift={(596.46,220.65)}, rotate = 65.75] [color={rgb, 255:red, 0; green, 0; blue, 0 }  ][fill={rgb, 255:red, 0; green, 0; blue, 0 }  ][line width=0.75]      (0, 0) circle [x radius= 3.35, y radius= 3.35]   ;
\draw [shift={(546.61,110)}, rotate = 65.75] [color={rgb, 255:red, 0; green, 0; blue, 0 }  ][fill={rgb, 255:red, 0; green, 0; blue, 0 }  ][line width=0.75]      (0, 0) circle [x radius= 3.35, y radius= 3.35]   ;

\draw (397.74,232.21) node  [font=\small]  {$x_{1} =x_{k} '$};
\draw (392.56,97.76) node  [font=\small]  {$y_{k} '=y_{1} '=T( x_{1})$};
\draw (454.59,245.61) node  [font=\small]  {$x_{2} =x_{k-1} '$};
\draw (435.41,78.16) node  [font=\small]  {$y_{2} =y_{k-1} '=T( x_{2})$};
\draw (498.44,232) node  [font=\small]  {$\cdots $};
\draw (498.27,97.55) node  [font=\small]  {$\cdots $};
\draw (539.29,246.21) node  [font=\small]  {$x_{k-1} =x_{2} '$};
\draw (573.12,77.76) node  [font=\small]  {$y_{k-1} =y_{2} '=T( x_{k-1})$};
\draw (596.15,231.61) node  [font=\small]  {$x_{k} =x_{1} '$};
\draw (622.97,98.16) node  [font=\small]  {$y_{1} =y_{k} =T( x_{k})$};
\draw (54.56,97.76) node  [font=\small]  {$y_{k} '=y_{1} '=T( x_{1})$};
\draw (98.41,78.16) node  [font=\small]  {$y_{2} =y_{k-1} '=T( x_{2})$};
\draw (161.27,97.55) node  [font=\small]  {$\cdots $};
\draw (236.12,77.76) node  [font=\small]  {$y_{k-1} =y_{2} '=T( x_{k-1})$};
\draw (284.97,98.16) node  [font=\small]  {$y_{1} =y_{k} =T( x_{k})$};
\draw (59.74,232) node  [font=\small]  {$x_{1} =x_{k} '$};
\draw (116.59,245.39) node  [font=\small]  {$x_{2} =x_{k-1} '$};
\draw (161.44,231.79) node  [font=\small]  {$\cdots $};
\draw (202.29,246) node  [font=\small]  {$x_{k-1} =x_{2} '$};
\draw (259.15,231.39) node  [font=\small]  {$x_{k} =x_{1} '$};

\end{tikzpicture}

} 
\end{center}
\vspace{-1em}\caption{Schematic representation of the sums~\eqref{eq:firstSum} (left) and~\eqref{eq:secondSum} (right). Each dashed line stands for a term $c(\cdot,\cdot)$.} 
\label{fi:sums}
\end{figure}
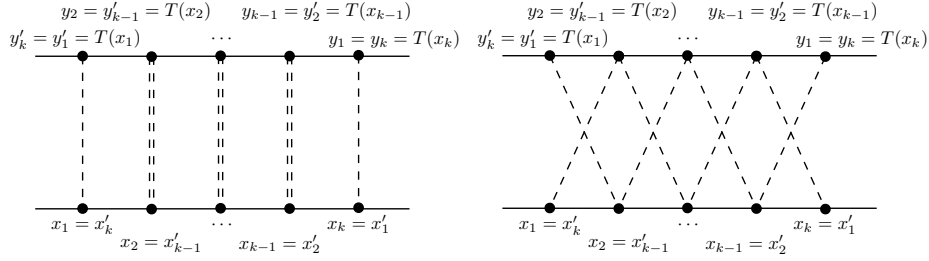
  In summary, we have 
  \begin{align*}
      I(x,y)+ I(x',y') &\geq c(x,y)+ c(x',y')- c(x,y')-c(x',y) + \Xi
  \end{align*} 
  where, always with the conventions $x_{1}=x$ and $x_{k}=x'$,
  \begin{align*}
    \Xi&:=\sup_{k\geq2}\,\sup_{x_{2},\dots,x_{k-1}\in\spt\mu} \Xi_{k} \quad \qforq\\
    \Xi_{k} &:= \sum_{i=1}^{k-1}c(x_{i},T(x_{i}))+ c(x_{i+1},T(x_{i+1}))-c(x_{i},T(x_{i+1}))- c(x_{i+1},T(x_{i})).
  \end{align*} 
 
  It remains to show that given $\eta>0$, we can achieve $\Xi_{k}\geq-\eta$ by a suitable choice of $k$ and $x_{2},\dots,x_{k-1}$.
  Fix a continuous, rectifiable curve $\varphi: [0,1]\to\X_{0}$ with $\varphi(0)=x$ and $\varphi(1)=x'$, and denote its length by $C$. For each $k\geq2$ there exist $0=t_{1}<t_{2}<\dots< t_{k-1}<t_{k}=1$ such that $x_{i}:=\varphi(t_{i})$ satisfy $d(x_{i},x_{i+1})\leq C/(k-1)$ for all $1\leq i\leq k-1$. Applying Assumption~\ref{as:crossContinuity} on the compact set $\varphi([0,1])$, we have that 
  \begin{align}\label{eq:sumI2cont}
  \sum_{i=1}^{k-1} & |c(x_{i},T(x_{i}))+ c(x_{i+1},T(x_{i+1}))-c(x_{i},T(x_{i+1}))- c(x_{i+1},T(x_{i}))| \nonumber\\
  &\leq (k-1) o(C/(k-1)) = o(1)
  \end{align}
  as $k\to \infty$.
\end{proof}

\begin{remark}
The preceding arguments can be generalized to handle certain discontinuities in~$T$, even though at a discontinuity, \eqref{eq:crossContinuity} can only be expected with $o(1)$ rather than $o(d(x_{1},x_{2}))$. Indeed, the conclusion of \eqref{eq:sumI2cont} still holds if for a bounded number of $i$'s, the term under the sum is only $o(1)$. For instance, this can be used to handle the case of semi-discrete transport with quadratic cost,  where~$\nu$ has finite support and hence the transport map is necessarily discontinuous.
\end{remark}

\begin{corollary}\label{co:liminf2}
  Let Assumption~\ref{as:crossContinuity} hold. For any open set $U\subset \X_{0}\times\Y_{0}$,
  $$
    \liminf_{\eps\to0} \eps \log \pi_{\eps}(U) \geq -\inf_{(x,y)\in U} I(x,y).
  $$ 
\end{corollary}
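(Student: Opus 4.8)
The plan is to re-run the proof of Corollary~\ref{co:liminf} essentially verbatim, the only substantive change being that the identity~\eqref{eq:Isum} is replaced by the inequality~\eqref{eq:Isum2}: inspecting that proof shows that only the bound $\alpha\le I(x,y)+I(x',y')$ is ever used, and this is exactly what Lemma~\ref{le:sumI2} now supplies under Assumption~\ref{as:crossContinuity}. As before, it suffices to fix $(x,y)\in U$ and $\eta>0$ and exhibit $r_0>0$ with $\limsup_{\eps\to0}-\eps\log\pi_\eps(B_r(x,y))\le I(x,y)+\eta$ for all $r<r_0$; since $\pi_\eps(U)\ge\pi_\eps(B_r(x,y))$ once $B_r(x,y)\subset U$, optimizing over $(x,y)\in U$ and sending $\eta\downarrow0$ then yields the stated liminf bound.

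The steps are as follows. (i) Using $x\in\X_0$, $y\in\Y_0$ and Assumption~\ref{as:crossContinuity}(a), choose the auxiliary point $y':=T(x)$ and $x'\in\X_0$ with $T(x')=y$, so that $(x',y),(x,y')\in\Gamma$ and $(x',y')\in\X_0\times\Y_0$; set $\alpha:=c(x,y)+c(x',y')-c(x,y')-c(x',y)$, which is $\ge0$ by $c$-cyclical monotonicity applied to $(x,T(x)),(x',T(x'))\in\Gamma$. (ii) For small $r$, invoke Lemma~\ref{le:expBound} with $B_r(x,y)\times B_r(x',y')\subset A_2(0,\alpha+\eta/2)$ and with $\bar A$ contained in small neighborhoods of $(x,y'),(x',y)\in\spt\pi_*$ (so that $\liminf_{\eps\to0}\eps\log\pi_\eps^2(\bar A)=0$ by weak convergence) to obtain the analogue of~\eqref{eq:proofLimInf0}. (iii) For small $r$, invoke Lemma~\ref{le:expBoundWithSptPts} at $(x',y')$ --- reading $I(x',y')$ through the cyclic formula~\eqref{eq:defI2}, which is legitimate since $x'\in\X_0$ by Lemma~\ref{le:defI} --- to obtain the analogue of~\eqref{eq:proofLimInf2}. (iv) Combine (ii) and (iii) via $\limsup(a_\eps+b_\eps)\ge\limsup a_\eps+\liminf b_\eps$ together with $\alpha\le I(x,y)+I(x',y')$ from Lemma~\ref{le:sumI2}, exactly as at the end of Corollary~\ref{co:liminf}, and cancel the term $\liminf_{\eps\to0}-\eps\log\pi_\eps(B_r(x',y'))$, which is finite because $-\eps\log\pi_\eps(B_r(x,y))\ge0$ forces it to be $\le\alpha+\eta/2$.

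The one ingredient not already covered by Corollary~\ref{co:liminf} is the finiteness of $I$ on $\X_0\times\Y_0$, which there came from Proposition~\ref{pr:dualIndep}. This still holds: the base-point-specific computation inside the proof of Proposition~\ref{pr:dualIndep}, namely $I(x,y)=c(x,y)+\psi_{(\bar x,y)}(x)-\psi^c_{(\bar x,y)}(y)$ for a Rockafellar antiderivative with $(\bar x,y)\in\Gamma$, uses only $c$-cyclical monotonicity of $\Gamma$ (not Assumption~\ref{as:uniquePotential}), and its right-hand side is finite on $\X_0\times\Y_0$; alternatively, as noted in (iv), the cancellation step makes the required finiteness of $\liminf_{\eps\to0}-\eps\log\pi_\eps(B_r(x',y'))$ automatic. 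I expect the only mildly delicate bookkeeping to be the verification that the product of balls lies in $A_2(0,\alpha+\eta/2)$, i.e.\ that the cost combination $c(\tilde x,\tilde y)+c(\tilde x',\tilde y')-c(\tilde x,\tilde y')-c(\tilde x',\tilde y)$ stays non-negative for $(\tilde x,\tilde y)$ near $(x,y)$ and $(\tilde x',\tilde y')$ near $(x',y')$; this is handled precisely as in Corollary~\ref{co:liminf}. Beyond this I do not anticipate a genuine obstacle, since Lemma~\ref{le:sumI2} delivers exactly the inequality the argument needs.
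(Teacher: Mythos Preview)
Your proposal is correct and matches the paper's approach: re-run the proof of Corollary~\ref{co:liminf} with the equality~\eqref{eq:Isum} replaced by the inequality~\eqref{eq:Isum2} from Lemma~\ref{le:sumI2}, which is exactly the substitution the paper indicates before omitting the details. Your handling of the finiteness of $I(x',y')$ (either via the base-point Rockafellar computation or, more slickly, via the chain of inequalities itself) fills in a point the paper's sketch leaves implicit; the only thing the paper adds beyond your outline is the side remark that the argument also upgrades~\eqref{eq:Isum2} to the equality~\eqref{eq:Isum}.
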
  

\begin{proof}
The argument is similar to the proof of Corollary~\ref{co:liminf}, using the inequality~\eqref{eq:Isum2} instead of the equality~\eqref{eq:Isum}. In the course of the argument one also obtains that~\eqref{eq:Isum2} already implies~\eqref{eq:Isum}. We omit the details.
\end{proof}

\section{Positivity of the Rate Function}\label{se:positivity}

The aim of this section is to establish that, under certain conditions, $I(x,y)$ of~\eqref{eq:defI} is strictly positive for $(x,y)\in\X_{0}\times\Y_{0}$ outside the support~$\Gamma$ of the limiting optimal transport~$\pi_{*}$. In view of Corollary~\ref{co:liminf}, this implies that that the mass of~$\pi_{\eps}$ around~$(x,y)$ converges exponentially fast.

When both marginals are supported by finitely many points, it is known that exponential convergence holds for any cost function~\cite{CominettiSanMartin.94, Weed.18}. We shall see that in the continuum case, such a statement must depend on the \emph{geometry} of the cost.
Throughout this section, we assume that $\X=\R^{d}$ (while $\Y$ is Polish). The cost $c$ is continuous and differentiable in~$x$ with $\nabla_{x}c$ continuous, and there exists an optimal transport $\pi_{*}$ as in~\eqref{eq:convOT}. We recall the \emph{twist condition} of optimal transport (e.g., \cite[p.\,234]{Villani.09}) which requires that $\nabla_{x}c(x,\cdot)$ be injective; it holds in particular for the quadratic cost. %
Example~\ref{ex:Ivanishes} below shows that $I$ may vanish on a set of large measure~$\mu\otimes\nu$ when the twist condition does not hold.

Similarly to the preceding section, we present a primal and a dual approach. The direct approach proceeds as follows. Given $(x,y)\notin\Gamma$, we use the geometry of~$c$ and regularity of the optimal transport to find an auxiliary pair $(\tilde x,\tilde y)\in\Gamma$ such that
$c(x,y) - c(\tilde x,y) + c(\tilde x, \tilde y) - c(x, \tilde y)>0$. Then, the definition~\eqref{eq:defI} of~$I$ (with $k=2$) shows that $I(x,y)>0$. The following is one possible implementation.

\begin{lemma}\label{le:positiveTool}
  Fix $(x,y')\in\Gamma$ and $y\in\Y$. Suppose that 
  \begin{equation}\label{eq:defv}
    v:=\nabla_{x}c(x,y) - \nabla_{x}c(x,y')\neq 0
  \end{equation}
  and that there exist $(x_{n},y_{n})\in\Gamma$ such that $(x_{n},y_{n})\to (x,y')$ and 
  \begin{equation}\label{eq:angle}
    \liminf \cos \alpha_{n}>0 \qforq \alpha_{n}:=\angle(v,x-x_{n}).
  \end{equation}
  Then $I(x,y)>0$.
\end{lemma}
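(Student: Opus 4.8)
The plan is to use the definition of $I$ with $k=2$ together with a first-order Taylor expansion of $c$ in the $x$-variable, exploiting the hypothesis~\eqref{eq:angle} that the approaching points $x_n$ sit (asymptotically) on the correct side of the hyperplane orthogonal to $v$. Concretely, for each $n$ the pair $(x_n,y_n)\in\Gamma$ is an admissible choice in~\eqref{eq:defI}: taking $(x_1,y_1):=(x,y)$ and $(x_2,y_2):=(x_n,y_n)$, the identity permutation being excluded, we get
\begin{align*}
  I(x,y) \;\geq\; \Delta_n \;:=\; c(x,y) + c(x_n,y_n) - c(x,y_n) - c(x_n,y),
\end{align*}
so it suffices to show that $\Delta_n>0$ for some $n$.

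First I would rewrite $\Delta_n$ by grouping the terms that share the second coordinate:
\begin{align*}
  \Delta_n = \bigl[c(x,y) - c(x_n,y)\bigr] - \bigl[c(x,y_n) - c(x_n,y_n)\bigr].
\end{align*}
Each bracket is a difference of values of $c(\cdot,z)$ at the two points $x$ and $x_n$, which are close when $n$ is large. Using differentiability of $c$ in $x$ and continuity of $\nabla_x c$, I would apply the mean value theorem (or a first-order Taylor expansion with integral remainder) on the segment $[x_n,x]$ to each bracket. Writing $h_n:=x-x_n\to 0$, one obtains
\begin{align*}
  \Delta_n = \langle \nabla_x c(x,y) - \nabla_x c(x,y'),\, h_n\rangle + o(|h_n|) = \langle v, h_n\rangle + o(|h_n|),
\end{align*}
where the error term absorbs both the replacement of intermediate gradient points by $x$ (continuity of $\nabla_x c$, with $x_n\to x$) and the replacement of $y_n$ by $y'$ in the second bracket (again continuity of $\nabla_x c$, with $y_n\to y'$). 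Then $\Delta_n = |v|\,|h_n|\cos\alpha_n + o(|h_n|)$, and since $\liminf_n\cos\alpha_n>0$ and $v\neq 0$, the leading term dominates for large $n$, giving $\Delta_n>0$ and hence $I(x,y)\geq\Delta_n>0$.

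The main obstacle is making the $o(|h_n|)$ bookkeeping rigorous and uniform: one must justify that the gradient of $c(\cdot,z)$ evaluated at points on the segment $[x_n,x]$ can be replaced by $\nabla_x c(x,z)$ with an error that is $o(1)$ as $n\to\infty$ \emph{simultaneously} for $z=y$ and $z=y_n$, and in the latter case also handle the moving second argument $y_n\to y'$. This is where continuity of $\nabla_x c$ (jointly, or at least at the relevant limit points $(x,y)$ and $(x,y')$) is essential; for $z=y_n$ one uses that $(x_n+t h_n, y_n)\to (x,y')$ uniformly in $t\in[0,1]$ so that $\nabla_x c(x_n+th_n,y_n)\to\nabla_x c(x,y')$. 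Once these estimates are assembled, the conclusion is immediate; I would present the argument cleanly by fixing the representation $\Delta_n=\langle v,h_n\rangle + R_n$ with $|R_n|\leq \varepsilon_n|h_n|$ where $\varepsilon_n\to 0$, and then choosing $n$ large enough that $|v|\cos\alpha_n > \varepsilon_n$.
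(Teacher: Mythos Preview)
Your argument is correct and matches the paper's proof essentially line for line: both choose $k=2$ with $(x_2,y_2)=(x_n,y_n)$, Taylor-expand the resulting four-term expression in the $x$-variable to obtain $\langle v,x-x_n\rangle + o(\|x-x_n\|)$, and conclude from $\liminf\cos\alpha_n>0$. The only cosmetic difference is that the paper displays the $y_n\to y'$ replacement as a separate $O(d(y',y_n))\|x-x_n\|$ term before absorbing it, whereas you fold it directly into the $o(|h_n|)$.
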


\begin{proof}
Set $\Delta(x,y',y_{n}):=\nabla_{x}c(x,y') - \nabla_{x}c(x,y_{n})$; then $\Delta(x,y',y_{n})\to0$ as $d(y',y_{n})\to0$ and we have
\begin{align*}
    \delta_{n}
    :&= c(x,y)  - c(x_{n},y)  + c(x_{n},y_{n}) - c(x, y_{n}) \\  
    &= \br{\nabla_{x}c(x,y) - \nabla_{x}c(x,y_{n}),x-x_{n}} + o(\|x-x_{n}\|) \\
    &= \br{\nabla_{x}c(x,y) - \nabla_{x}c(x,y') + \Delta(x,y',y_{n}),x-x_{n}} + o(\|x-x_{n}\|) \\
    &= \br{v,x-x_{n}} + o(\|x-x_{n}\|) + O(d(y',y_{n}))\|x-x_{n}\| \\
    &= \cos (\alpha_{n}) \|v\| \|x-x_{n}\| + o(\|x-x_{n}\|) + O(d(y',y_{n}))\|x-x_{n}\|.
  \end{align*} 
  As $v\neq 0$ and $\liminf_{n} \cos \alpha_{n}>0$, it follows that $\delta_{n}>0$ for~$n$ large enough. Fix such an~$n$, then choosing $k=2$ and $(x_{2},y_{2}):=(x_{n},y_{n})$ in~\eqref{eq:defI} shows that $I(x,y)\geq \delta_{n}>0$.
\end{proof}

Recall the notation $\X_{0}=\proj_{\X}\Gamma$ and $\Gamma=\spt \pi_{*}$. If~$x$ is interior in $\X_{0}$, we can choose auxiliary points in any direction from~$x$ and Lemma~\ref{le:positiveTool} yields a positivity result for~$I(x,y)$ as follows.

\begin{lemma}\label{le:positiveInteriorCont}
  Let $x\in\Int \X_{0}$ and $y\in\Y$. Let $\pi_{*}$ be given by a transport map~$T$ which is continuous at~$x$. If 
  $
    \nabla_{x}c(x,y) - \nabla_{x}c(x,T(x)) \neq 0,
  $
  then $I(x,y)>0$.
\end{lemma}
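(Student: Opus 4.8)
The plan is to reduce Lemma~\ref{le:positiveInteriorCont} to Lemma~\ref{le:positiveTool} by constructing a sequence of points $(x_n, y_n) \in \Gamma$ approaching $(x, T(x))$ along a carefully chosen direction. Set $v := \nabla_x c(x,y) - \nabla_x c(x, T(x))$, which is nonzero by hypothesis. The idea is to approach $x$ from the direction of $-v$: since $x \in \Int \X_0$, for all small $t > 0$ the point $x - tv/\|v\| =: x_t$ lies in $\X_0$, so there is a point $(x_t, T(x_t)) \in \Gamma$ (here I use that $\Gamma = \spt \pi_*$ with $\pi_*$ given by the map $T$, hence $\X_0 = \proj_\X \Gamma$ and for every $x' \in \X_0$ the pair $(x', T(x'))$ lies in $\Gamma$). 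Taking $t = t_n \to 0$ gives $x_n := x_{t_n} \to x$, and by continuity of $T$ at $x$ we get $T(x_n) \to T(x)$, so $(x_n, y_n) := (x_n, T(x_n)) \to (x, T(x))$ as required by Lemma~\ref{le:positiveTool} with $y' := T(x)$.

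It remains to verify the angle condition~\eqref{eq:angle}. By construction $x - x_n = t_n v / \|v\|$, which is a positive multiple of $v$; hence $\alpha_n = \angle(v, x - x_n) = 0$ and $\cos \alpha_n = 1$ for every $n$, so certainly $\liminf_n \cos \alpha_n = 1 > 0$. With $v \neq 0$ and this angle bound in hand, Lemma~\ref{le:positiveTool} applies directly and yields $I(x,y) > 0$, which is the conclusion.

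I do not expect any serious obstacle here; the lemma is essentially a specialization of Lemma~\ref{le:positiveTool} where the geometric freedom granted by $x \in \Int \X_0$ lets us pick the most favorable approach direction, namely exactly along $v$. The only points requiring a word of care are: (i) that $x \in \Int \X_0$ genuinely guarantees $x - tv/\|v\| \in \X_0$ for all sufficiently small $t > 0$ (immediate from the definition of interior), and (ii) that $\pi_*$ being given by a transport map $T$ means precisely that $\Gamma$ contains the graph of $T$ restricted to $\X_0$, so that $(x_n, T(x_n)) \in \Gamma$ is legitimate. Both are routine, and the continuity of $T$ at $x$ is used exactly once, to ensure $y_n \to y'$.

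\begin{proof}
Set $y' := T(x)$, so $(x,y') \in \Gamma$, and let $v := \nabla_x c(x,y) - \nabla_x c(x, T(x)) \neq 0$ by assumption; this is~\eqref{eq:defv}. Since $x \in \Int \X_0$, there is $t_0 > 0$ such that $x - t v/\|v\| \in \X_0$ for all $t \in (0, t_0]$. Pick $t_n \in (0,t_0]$ with $t_n \to 0$ and set $x_n := x - t_n v /\|v\| \in \X_0$ and $y_n := T(x_n)$, so $(x_n, y_n) \in \Gamma$. Then $x_n \to x$, and since $T$ is continuous at~$x$, $y_n = T(x_n) \to T(x) = y'$; thus $(x_n, y_n) \to (x, y')$. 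Moreover $x - x_n = t_n v/\|v\|$ is a positive multiple of $v$, so $\alpha_n := \angle(v, x - x_n) = 0$ and $\cos \alpha_n = 1$ for all $n$, whence $\liminf_n \cos \alpha_n = 1 > 0$, which is~\eqref{eq:angle}. Lemma~\ref{le:positiveTool} now yields $I(x,y) > 0$.
\end{proof}
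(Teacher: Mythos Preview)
Your proof is correct and follows essentially the same approach as the paper's: both set $y'=T(x)$, approach $x$ along the direction of~$v$ using $x\in\Int\X_0$, use continuity of~$T$ at~$x$ to get $(x_n,T(x_n))\to(x,T(x))$, and invoke Lemma~\ref{le:positiveTool} with $\cos\alpha_n=1$. If anything, your version is slightly more careful in specifying the sign of the direction so that $x-x_n$ is a \emph{positive} multiple of~$v$.
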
 

\begin{proof}
  For $n$ large we can uniquely define a point $x_{n}\in \partial B_{1/n}(x)\subset \X_{0}$ by the requirement that  $x-x_{n}$ be parallel to~$v:=\nabla_{x}c(x,y) - \nabla_{x}c(x,T(x))$ (here $\partial B$ denotes the boundary). Then $\cos \alpha_{n}=1$ in the notation of~\eqref{eq:angle} and we conclude using Lemma~\ref{le:positiveTool} with $(x,y')=(x,T(x))$ and $(x_{n},y_{n})=(x_{n},T(x_{n}))$.
\end{proof} 

Sufficient conditions for the continuity (and higher regularity) of the transport map have been studied extensively; see \cite[Section~12]{Villani.09} for an overview of now-classical results and, among others, \cite{CorderoFigalli.19} for recent results including unbounded domains.

The situation is more delicate if~$x$ is a boundary point of~$\X_{0}$ or a point of discontinuity of the transport map, as that restricts the viable choices for approximating sequences. We provide some examples of possible results; for simplicity of exposition, they are stated for the quadratic cost on $\X=\Y=\R^{d}$. The extension of such arguments to a general class of cost functions  is discussed in Appendix~\ref{se:Loeper}.

\begin{lemma}\label{le:positiveAtBoundaryQuadratic}
  Let $c(x,y)=\|x-y\|^{2}$, let $\X_{0}$ be strictly convex\footnote{In the sense that the open segment $(x,x')$ is contained in $\Int\X_{0}$ for distinct $x,x'\in\X_{0}$.} and consider $(x,y)\in (\X_{0}\times \Y_{0})\setminus \Gamma$ with $x\in \partial \X_{0}$. Suppose that $\pi_{*}$ is given by a transport map~$T$ which is continuous on a neighborhood $B_{r}(x)\cap \X_{0}$ for some $r>0$.
Then $I(x,y)>0$.
\end{lemma}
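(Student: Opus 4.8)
The plan is to reduce to Lemma~\ref{le:positiveTool} with a single auxiliary point $(x,y')\in\Gamma$, choosing $y':=T(x)$, and then to find a good approximating sequence $(x_{n},y_{n})\in\Gamma$. Since $(x,y)\notin\Gamma$ while $(x,T(x))\in\Gamma$ (recall $x\in\X_0$), we have $y\neq T(x)$; for the quadratic cost $c(x,y)=\|x-y\|^2$ one computes $\nabla_x c(x,y)=2(x-y)$, so $v:=\nabla_x c(x,y)-\nabla_x c(x,T(x))=2(T(x)-y)\neq 0$. Thus the nondegeneracy condition~\eqref{eq:defv} holds automatically. The only remaining task is to produce points $x_n\in\X_0$ with $x_n\to x$, $T(x_n)\to T(x)$, and $\liminf_n\cos\angle(v,x-x_n)>0$; then Lemma~\ref{le:positiveTool} applied with $(x_2,y_2)=(x_n,T(x_n))$ gives $I(x,y)>0$.

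The construction of $x_n$ is where strict convexity of $\X_0$ and the boundary location of $x$ come in. If $v$ happened to point "into" $\X_0$ at $x$, we could just walk from $x$ a small distance in the direction of $v$ as in Lemma~\ref{le:positiveInteriorCont} and get $\cos\alpha_n=-1$ — wait, we need $x-x_n$ parallel to $v$ with positive cosine, i.e. $x_n$ on the side such that $x-x_n$ is a positive multiple of $v$, so $x_n=x-t_n v/\|v\|$; this lies in $\X_0$ for small $t_n$ precisely when $-v$ points into $\X_0$. The issue at a boundary point is that one of the two directions $\pm v$ may point out of $\X_0$. Here strict convexity is the key: pick any interior point $z\in\Int\X_0$ (nonempty, else there is nothing to prove since $\X_0$ would be lower-dimensional — but then any approach works, or we argue $x$ cannot be isolated). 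The segment $[x,z)$ lies in $\Int\X_0$ by strict convexity, so moving from $x$ a small distance toward $z$ stays in $\X_0$ and in fact in the interior, where $B_r(x)\cap\X_0$-continuity of $T$ guarantees $T(x_n)\to T(x)$. Set $x_n := x + t_n(z-x)/\|z-x\|$ with $t_n\downarrow 0$; then $x-x_n$ is a negative multiple of $z-x$, so $\cos\angle(v,x-x_n) = -\cos\angle(v,z-x)$, which is a fixed nonzero number unless $v\perp(z-x)$.

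So the plan needs a small amount of freedom: rather than a fixed interior point $z$, use that $\Int\X_0$ is open and nonempty and the set of admissible chord directions from $x$ into $\X_0$ has nonempty interior in the unit sphere (again by strict convexity: for every $z\in\Int\X_0$ the open segment $(x,z)\subset\Int\X_0$, and these $z$'s fill an open set). Among these directions we may choose one, call it $w$, with $\langle v,w\rangle\neq 0$ — indeed $\langle v,\cdot\rangle$ vanishes only on a hyperplane, which cannot contain the open cone of admissible directions. Replacing $w$ by $-w$ if needed (one of $\pm w$ need not be admissible at a boundary point, but we only need one sign, and we are free to have chosen the admissible one with the correct sign of $\langle v,\cdot\rangle$ — if the admissible sign gives $\langle v,w\rangle<0$ simply note $\cos\alpha_n$ has constant sign and we want it positive, so pick the direction accordingly; the point is the open cone is not contained in the hyperplane $v^\perp$, and an open cone meets both open half-spaces $\{\langle v,\cdot\rangle>0\}$ if it meets $v^\perp$, or lies in one of them, and we just need it to meet $\{\langle v,\cdot\rangle>0\}$, equivalently the original cone or its reflection does). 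Then $x_n:=x+t_n w$ with $t_n\downarrow0$ small enough lies in $B_r(x)\cap\X_0$, $T(x_n)\to T(x)$ by continuity, $x-x_n=-t_n w$, and $\cos\angle(v,x-x_n)=-\langle v,w\rangle/\|v\|>0$ (after the sign choice). Lemma~\ref{le:positiveTool} then yields $I(x,y)>0$.

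\textbf{Main obstacle.} The delicate point is verifying that the cone of admissible chord directions at the boundary point $x$ is genuinely "fat" enough — specifically, that it is not contained in the hyperplane $v^{\perp}$ and in fact meets the open half-space $\{\langle v,\cdot\rangle>0\}$ — and reconciling this with the one-sidedness forced by $x\in\partial\X_0$. Strict convexity is exactly what rules out degenerate cases (a cusp, or $\X_0$ locally looking like a half-hyperplane) and guarantees the cone of directions pointing from $x$ into $\Int\X_0$ is open and nonempty; once that is in hand, the half-space intersection is automatic because an open nonempty cone cannot lie in the closed half-space $\{\langle v,\cdot\rangle\le 0\}$ while also being reflection-symmetric — rather, one simply observes directly that an open cone not reducing to a point meets $\{\langle v,\cdot\rangle>0\}$ unless it lies entirely in $\{\langle v,\cdot\rangle\le 0\}$, in which case we would instead need... — this bookkeeping about which sign is available is the only real subtlety, and it is resolved by noting we have freedom in choosing $z\in\Int\X_0$, hence in choosing $w$, and the constraint $\langle v,w\rangle>0$ excludes only a half-space's worth, leaving a full-dimensional set of valid choices.
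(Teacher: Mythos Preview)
Your overall strategy --- reduce to Lemma~\ref{le:positiveTool} with $y'=T(x)$ and approach $x$ along a fixed chord direction into $\Int\X_0$ --- is exactly the paper's. The computation $v=2(T(x)-y)\neq 0$ is correct, and the reduction to finding some $x''\in\X_0$ with $\langle v,\,x-x''\rangle>0$ (after which one takes $x_n$ on the open segment $(x'',x)\subset\Int\X_0$ and invokes continuity of~$T$) is precisely what the paper does.

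There is, however, a genuine gap at the step you yourself flag as the ``main obstacle.'' You need the cone of admissible directions $w$ from~$x$ into $\Int\X_0$ to intersect the open half-space $\{\langle v,w\rangle<0\}$. Your argument --- the cone is open and nonempty, hence not contained in the hyperplane $v^{\perp}$ --- establishes only that the cone meets \emph{one} of the two open half-spaces, not which one. At a boundary point the admissible cone is genuinely one-sided (picture the unit ball with $x$ on the sphere: the cone is an open half-space), and nothing you have written excludes the possibility that it lies entirely in $\{\langle v,w\rangle>0\}$, in which case $\langle v,\,x-x_n\rangle<0$ for \emph{every} admissible $x_n$. The sentence ``the constraint excludes only a half-space's worth, leaving a full-dimensional set of valid choices'' is false exactly in this scenario; you cannot replace $w$ by $-w$ because $-w$ need not be admissible.

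The missing input is the hypothesis $y\in\Y_0$, which your argument never uses. Because $y\in\Y_0$, there exists $x'\in\X_0$ with $(x',y)\in\Gamma$, and $x'\neq x$ since $T(x')=y\neq T(x)$. Cyclical monotonicity of~$\Gamma$ applied to the pair $(x,T(x)),(x',y)$ yields $\langle T(x)-y,\,x-x'\rangle\geq 0$, i.e.\ $\langle v,\,x-x'\rangle\geq 0$. If the inequality is strict, set $x'':=x'$. If equality holds, the midpoint $\bar x$ of $[x,x']$ lies in $\Int\X_0$ by strict convexity and satisfies $\langle v,\,x-\bar x\rangle=0$; perturbing $\bar x$ slightly inside a ball $B_\rho(\bar x)\subset\X_0$ produces $x''$ with $\langle v,\,x-x''\rangle>0$. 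This is exactly how the paper closes the argument.
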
 

\begin{proof}
  The main step is to find a point $x''\in \X_{0}$ such that
  \begin{equation}\label{eq:proofPositiveAtBoundaryQuadratic}
    \br{v,x-x''}>0.
  \end{equation}
  Once that is achieved, we may choose a sequence $x_{n}\to x$ in the open segment $(x'',x)$ which is contained in $\Int\X_{0}$ due to strict convexity. As $(x_{n},T(x_{n}))\to (x,T(x))$ by continuity and $\alpha_{n}=\angle(v,x-x'')$ for all~$n$, we conclude by Lemma~\ref{le:positiveTool} with $(x,y'):=(x,T(x))$. %
  
  To find~$x''$ satisfying~\eqref{eq:proofPositiveAtBoundaryQuadratic}, we first fix $x'\in\X_{0}$ such that $(x',y)\in\Gamma$. As~$c$ is quadratic, we have $v=y'-y$ in~\eqref{eq:defv} and the cyclical monotonicity of~$\Gamma$ yields
  $$
    \br{v,x-x'}=\br{y'-y,x-x'}\geq0.
  $$
  If this inequality is strict, we choose $x'':=x'$. Whereas if $\br{v,x-x'}=0$, we consider the mid-point $\bar{x}=(x'-x)/2$ which satisfies $\bar{x}\in\Int\X_{0}$ by strict convexity as well as  $\br{v,x-\bar{x}}=0$. After choosing $\rho>0$ small enough such that $\partial B_{\rho}(\bar{x})\subset \X_{0}$, we can find a point $x''\in\partial B_{\rho}(\bar{x})\subset \X_{0}$ such that $\br{v,x-x''}>0$, completing the proof.
\end{proof}

Next, we illustrate the dual approach in a problem with \emph{discontinuous} optimal transport map. For the remainder of the section, we assume that there exists a Kantorovich potential $\psi$ such that 
\begin{equation}\label{eq:IandPotentialsPos}
      I(x,y)=c(x,y)-\psi^{c}(y)+\psi(x), \quad (x,y)\in \X_{0}\times\Y_{0}.
\end{equation}
As seen in Proposition~\ref{pr:dualIndep}, a sufficient condition is Assumption~\ref{as:uniquePotential} (uniqueness of potentials). If we assume that $\mu\sim\cL^{d}$ on its support, the quadratic cost and the convexity condition in the below results already guarantee that Assumption~\ref{as:uniquePotential} holds; cf.\ Proposition~\ref{pr:dualUniqueness}. The relevance of~\eqref{eq:IandPotentialsPos} is that it yields the representation
\begin{equation}\label{eq:InullAndcSubdiff}
      \{I=0\} \cap (\X_{0}\times\Y_{0})= \partial_{c}\psi \cap (\X_{0}\times\Y_{0}),
\end{equation}
so that our question regarding exponential convergence can be phrased as: 
\begin{center}
does $\Gamma$ fill the entire set $\partial_{c}\psi \cap (\X_{0}\times\Y_{0})$?
\end{center}
The intersection with $\X_{0}\times\Y_{0}$ is crucial to avoid a negative answer in many cases with discontinuous transport (see also the proof of Proposition~\ref{pr:positiveQuadraticSemidiscrete} below). On the other hand, the intersection is justified because the interpretation of~$I$ as rate of convergence is meaningless outside $\spt\pi_{\eps}$.

We first state the following continuation argument similar to Lemma~\ref{le:positiveAtBoundaryQuadratic}.

\begin{lemma}\label{le:positiveAtBoundaryQuadraticDual} 
  Let $c(x,y)=\|x-y\|^{2}$, let $\X_{0}$ be strictly convex and consider $(x,y)\in (\X_{0}\times \Y_{0})\setminus \Gamma$ with $x\in \partial \X_{0}$. Suppose that $I(\tilde{x},y)>0$ for all $\tilde{x}\in \Int \X_{0} \cap B_{r}(x)$, for some $r>0$. Then $I(x,y)>0$.
\end{lemma}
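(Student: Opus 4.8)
The plan is to use the representation~\eqref{eq:IandPotentialsPos} to convert the claim into a statement purely about the Kantorovich potential, then exploit the regularity of $\psi$ that comes from the quadratic cost. Concretely, recall that for $c(x,y)=\|x-y\|^2$ one has $\psi^c(y)-\psi(x) = c(x,y)$ exactly on $\partial_c\psi$, and that $\psi(x) + \|x\|^2/2$ is (up to sign conventions and a factor of~$2$) a convex function, so $\psi$ is locally Lipschitz and differentiable $\cL^d$-a.e.\ on $\Int\X_0$. Fix $(x,y)\in(\X_0\times\Y_0)\setminus\Gamma$ with $x\in\partial\X_0$, and suppose for contradiction that $I(x,y)=0$, i.e.\ $(x,y)\in\partial_c\psi$. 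The goal is to derive that $I(\tilde x,y)=0$ for some $\tilde x\in\Int\X_0\cap B_r(x)$, contradicting the hypothesis.

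First I would use strict convexity of $\X_0$: pick any interior point $x_0\in\Int\X_0$, so the half-open segment $(x,x_0]$ lies in $\Int\X_0$ by strict convexity, and choose $\tilde x_t := (1-t)x + t x_0$ for small $t>0$, which lies in $B_r(x)$ for $t$ small. The quantity $t\mapsto \Phi(t):= c(\tilde x_t,y)-\psi^c(y)+\psi(\tilde x_t) = I(\tilde x_t,y)\ge 0$ satisfies $\Phi(0)=0$. If I can show $\Phi$ is differentiable from the right at $t=0$ with $\Phi'(0^+)\le 0$ — or more robustly, that $\Phi$ has a nonpositive right-upper Dini derivative at $0$ — then $\Phi\equiv 0$ on a small interval would follow only if I also control convexity; better, I would argue that $\Phi(t)\le o(t)$ forces, together with a convexity/monotonicity property of $\Phi$, that $\Phi$ vanishes identically near $0$. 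The clean way: since $(x,y)\in\partial_c\psi$ means $\psi(x) = \psi^c(y) - \|x-y\|^2$, i.e.\ $x$ is a point where the affine (in the Legendre-transformed sense) lower bound $z\mapsto \psi^c(y)-\|z-y\|^2$ touches $\psi$ from below; by $c$-convexity, $\psi(z)\ge \psi^c(y)-\|z-y\|^2$ for all $z$, hence $\Phi(t) = \psi(\tilde x_t) - [\psi^c(y)-\|\tilde x_t-y\|^2]\ge 0$ automatically, and $\Phi(t)=0$ exactly when the touching persists, i.e.\ when $(\tilde x_t,y)\in\partial_c\psi$, i.e.\ $y\in\partial\psi(\tilde x_t)$ in the convex-analytic sense (for the convexified potential).

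The key step is then: the set $\{z: y\in\partial\phi(z)\}$, where $\phi(z) = \psi(z)+\|z\|^2$... actually $\phi(z):=\|z-y\|^2 + \psi(z)$ is convex and nonnegative with minimum value $\psi^c(y)$, attained precisely on $\partial_c\psi$'s $y$-fibre; this fibre is a closed convex set. So $F_y := \{z\in\X_0 : I(z,y)=0\} = \{z : (z,y)\in\partial_c\psi\}\cap\X_0$ is (the intersection with $\X_0$ of) a closed convex set. If $x\in F_y$ and $x\in\partial\X_0$, then by strict convexity of $\X_0$ any segment from $x$ into $\X_0$ immediately enters $\Int\X_0$; but a convex set $F_y$ containing the boundary point $x$ need not contain interior points of $\X_0$ near $x$ — unless $F_y$ is a single point. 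Here is where I would invoke the hypothesis differently: actually the hypothesis $I(\tilde x,y)>0$ for all $\tilde x\in\Int\X_0\cap B_r(x)$ says $F_y\cap\Int\X_0\cap B_r(x)=\emptyset$, so $F_y\cap B_r(x)\subseteq\partial\X_0$; being convex and contained in the strictly convex boundary $\partial\X_0$ locally, $F_y\cap B_r(x)$ can contain at most one point, namely $x$ itself — consistent with $(x,y)\in\partial_c\psi$, so no contradiction yet from pure convexity. Therefore the real argument must produce a genuine interior point in $F_y$: I would show that $(x,y)\in\partial_c\psi$ with $x\in\partial\X_0$ forces, via the continuation/openness of the $c$-subdifferential along with $\mu\sim\cL^d$ and the quadratic structure, that $\partial\phi(z)\ni y$ for a positive-measure set of $z$ near $x$ inside $\X_0$. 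Indeed $\phi$ convex and $y\in\partial\phi(x)$; if $y$ is not a subgradient at any interior point, then $\phi$ is "strictly convex toward the interior" at $x$, but then Lemma~\ref{le:positiveAtBoundaryQuadratic}-type reasoning applies — which is exactly why the statement is phrased as a reduction to interior positivity.

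Thus I would structure the proof as a direct continuation argument mirroring Lemma~\ref{le:positiveAtBoundaryQuadratic}'s proof: assume $I(x,y)=0$; use $c$-convexity to get $\psi(z)\ge\psi^c(y)-\|z-y\|^2$ with equality at $z=x$; differentiate (using the quadratic cost so that $\nabla_x c(x,y)=2(x-y)$) to find that $y$ must be the "outgoing gradient" of $\psi$ at the boundary point $x$; then, since $\X_0$ is strictly convex, pick a sequence $x_n\to x$ from $\Int\X_0\cap B_r(x)$ along a segment, note $T$ need not be used — instead use that $\psi$ being convex (mod the quadratic) and touched by the paraboloid at $x$ implies $I(x_n,y) = \psi(x_n)-\psi(x)-\langle 2(x-y),x_n-x\rangle + o(\|x_n-x\|)$; and by convexity of $z\mapsto\psi(z)+\|z\|^2$ combined with $y\in\partial(\cdot)(x)$, this is $\le o(\|x_n - x\|)$, while the hypothesis gives $I(x_n,y)>0$; passing to $I(x_n,y)/\|x_n-x\|\to 0$ along the segment then... still does not immediately contradict strict positivity. \emph{The main obstacle}, which I would flag explicitly, is precisely closing this gap: positivity of $I(x_n,y)$ for each $n$ is compatible with $I(x_n,y)=o(\|x_n-x\|)$. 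To break it, I would instead run the argument "in reverse": assume $I(x,y)=0$ and show directly that $I(\tilde x,y)=0$ for $\tilde x$ interior — using that the $y$-fibre of $\partial_c\psi$ is convex (as shown above, it is $\{z:\phi(z)=\min\phi\}$), that it contains $x\in\partial\X_0$, and that (crucially, from $\mu\sim\cL^d$ on $\X_0$ and the fact that $\Gamma\subseteq\partial_c\psi$ has full $\mu$-measure projection) $\partial_c\psi$ cannot be "concentrated" on $\partial\X_0$, hence its $y$-fibre, if it reaches the boundary, must be a nondegenerate convex set poking into $\Int\X_0$ — giving the desired interior point with $I=0$, a contradiction. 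So the plan's hard core is this convex-geometry dichotomy: the $y$-fibre $F_y$ of $\partial_c\psi$ is convex; if it contains a boundary point of a strictly convex $\X_0$, either $F_y=\{x\}$ (ruled out by differentiability/continuation considerations forcing a neighborhood, via the quadratic cost and measure-theoretic genericity) or $F_y$ meets $\Int\X_0$ (contradicting the hypothesis). I would present this as the crux and expect it to be the most delicate part to make fully rigorous.
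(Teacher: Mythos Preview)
You correctly identify the key convex set $F_{y}=\{z:I(z,y)=0\}\cap\X_{0}$ (the $y$-fibre of $\partial_{c}\psi$ intersected with~$\X_{0}$), observe that it is convex for the quadratic cost, and assume $I(x,y)=0$ so that $x\in F_{y}$. You also correctly see that the proof should produce a second point of~$F_{y}$, so that the segment between them enters $\Int\X_{0}$ by strict convexity and contradicts the hypothesis. However, you never actually produce that second point: your attempts via Dini derivatives, measure-theoretic genericity, or ``$\partial_{c}\psi$ cannot be concentrated on $\partial\X_{0}$'' do not rule out $F_{y}=\{x\}$, and you explicitly flag this as the ``hard core'' you cannot close.

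The paper's argument supplies the second point in one line, using an assumption you never invoke: $y\in\Y_{0}=\proj_{\Y}\Gamma$. This means there exists $x'\in\X_{0}$ with $(x',y)\in\Gamma$, and since $I=0$ on~$\Gamma$ we have $x'\in F_{y}$ as well. Moreover $x'\neq x$ because $(x,y)\notin\Gamma$ while $(x',y)\in\Gamma$. Now convexity of $F_{y}$ gives $[x,x']\subset F_{y}$, strict convexity of~$\X_{0}$ places the open segment $(x,x')$ inside $\Int\X_{0}$, and points of that segment near~$x$ lie in $\Int\X_{0}\cap B_{r}(x)$ with $I(\cdot,y)=0$, contradicting the hypothesis. (In the paper this is phrased via $\partial\phi(y)$ for $\phi=-\psi^{c}$, using the equivalent cost $-\langle x,y\rangle/2$, but the content is the same.) So your framework is right; the missing idea is simply to exploit $y\in\Y_{0}$ to exhibit the second point of~$F_{y}$.
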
 

\begin{proof}
  We may state the proof with the equivalent cost $c(x,y)=-\br{x,y}/2$, so that the notions of $c$-convex analysis and convex analysis coincide. Suppose for contradiction that $I(x,y)=0$. Fix $x'\in\X_{0}$ such that $(x',y)\in\Gamma$ and denote $\phi:=-\psi^{c}$ for $\psi$ as in~\eqref{eq:IandPotentialsPos}, then both $x$ and $x'$ are in the set
  $$
    \{I(\cdot,y)=0\} = \partial_{c} \phi(y) = \partial\phi(y),
  $$
  where $\partial\phi(y)$ denotes the subdifferential of the convex function $\phi$ in the usual sense. The latter set being convex, it must include the whole segment $[x,x']$, meaning that $I(\tilde{x},y)=0$ for all $\tilde{x}\in[x,x']$. The interior of the segment is included in $\Int \X_{0}$ by strict convexity, contradicting the hypothesis.
\end{proof} 

\begin{proposition}[Semidiscrete Transport]\label{pr:positiveQuadraticSemidiscrete} 
  Let $c(x,y)=\|x-y\|^{2}$ on $\X=\Y=\R^{d}$, let $\X_{0}$ be strictly convex, let $\mu\ll\cL^{d}$ and let $\spt\nu$ be at most countable, with no accumulation points. Then $\{I=0\}\cap (\X_{0}\times \Y_{0})=\Gamma$.
\end{proposition}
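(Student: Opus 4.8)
The plan is to show that $I(x,y)>0$ for every $(x,y)\in(\X_{0}\times\Y_{0})\setminus\Gamma$; combined with $I=0$ on $\Gamma$ (Lemma~\ref{le:defI}) and the representation~\eqref{eq:InullAndcSubdiff}, this gives the claim. I first recall the structure of semidiscrete quadratic transport: since $\spt\nu=\{y_{1},y_{2},\dots\}$ is at most countable with no accumulation points and $\mu\ll\cL^{d}$, the Kantorovich potential $\psi$ in~\eqref{eq:IandPotentialsPos} gives a transport map $T$ that is a.e.\ defined by a power-diagram (Laguerre) decomposition: $\X_{0}$ is partitioned (up to $\mu$-null sets) into cells $\X_{j}=\{x: \|x-y_{j}\|^{2}-\psi^{c}(y_{j}) \le \|x-y_{k}\|^{2}-\psi^{c}(y_{k})\ \forall k\}$ on which $T\equiv y_{j}$, and each $\X_{j}$ is (the intersection with $\X_{0}$ of) a convex polyhedron. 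The point of discontinuity of $T$ is exactly the union of the cell boundaries, which is $\cL^{d}$-null but not empty.

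The argument then splits according to where $(x,y)$ sits. First, if $x\in\Int\X_{0}$ and $x$ lies in the interior of some cell $\X_{j}$, then $T$ is continuous at $x$ with $T(x)=y_{j}$, and since $(x,y)\notin\Gamma$ we have $y\neq y_{j}=T(x)$; because $c$ is quadratic, $\nabla_{x}c(x,y)-\nabla_{x}c(x,T(x))=2(T(x)-y)\neq0$, so Lemma~\ref{le:positiveInteriorCont} gives $I(x,y)>0$. Second, if $x\in\Int\X_{0}$ but $x$ lies on a cell boundary (a discontinuity point of $T$): here I would use that every neighborhood of $x$ meets $\Int\X_{j}$ for at least one $j$, and in fact $x$ lies in the \emph{closure} of some cell $\X_{j}$ with $y_{j}\neq y$ (this needs a small argument: if the only cells accumulating at $x$ were the one(s) with center $y$, then $x$ would be an interior point of $\{T=y\}$, forcing $(x,y)\in\Gamma$). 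Picking such $\X_{j}$, one approaches $x$ along a sequence $x_{n}\to x$ inside $\Int\X_{j}$; since $\X_{0}$ is open-convex near $x$ and the cell is convex, the directions $x-x_{n}$ can be taken along a fixed direction transversal enough to the supporting hyperplane, and one checks $\liminf\cos\alpha_{n}>0$ for $v=2(y_{j}-y)$, so Lemma~\ref{le:positiveTool} applies. (Here one must verify $v\neq0$, i.e.\ $y\neq y_{j}$, which was arranged.) Third, the boundary case $x\in\partial\X_{0}$ is reduced to the interior cases: by the two preceding paragraphs $I(\tilde x,y)>0$ for all $\tilde x\in\Int\X_{0}$ near $x$ (note $T$ is continuous on $B_{r}(x)\cap\X_{0}$ away from the $\cL^{d}$-null boundary set, which is enough to run the interior argument on a dense set of directions), and then Lemma~\ref{le:positiveAtBoundaryQuadraticDual} upgrades this to $I(x,y)>0$.

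The main obstacle I anticipate is the second case — handling an interior point $x$ that is a discontinuity of $T$. Two things need care there: (i) showing that some cell $\X_{j}$ with $y_{j}\neq y$ has $x$ in its closure and contains a cone of approach directions at $x$ with $\langle v, x-x_{n}\rangle$ bounded below by a positive multiple of $\|x-x_{n}\|$ — this is where the polyhedral (power-diagram) structure of the semidiscrete problem is essential, guaranteeing that cells are convex with nonempty interior locally and that $x$ is approached from inside a cell within a sector of positive aperture; and (ii) arguing that $(x,y)\notin\Gamma$ genuinely forces $y$ to differ from the center of at least one cell clustering at $x$, rather than $y$ being (say) a limit of cell-centers. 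The no-accumulation-points hypothesis on $\spt\nu$ is what rules out the latter pathology, since locally only finitely many cells are relevant. Once these geometric facts are in place the estimates are the same routine first-order expansion already used in Lemma~\ref{le:positiveTool}, and the intersection with $\X_{0}\times\Y_{0}$ in the statement is exactly what makes the conclusion clean, as it discards the spurious zeros of $I$ coming from $\partial_{c}\psi$ at points $(x,y_{j})$ with $x\notin\overline{\X_{j}}$.
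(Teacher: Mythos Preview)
Your primal route via Lemmas~\ref{le:positiveTool}, \ref{le:positiveInteriorCont} and~\ref{le:positiveAtBoundaryQuadraticDual} is a natural alternative to the paper's dual argument, and your Cases~1 and~3 are fine. But Case~2 has a genuine gap. Suppose $x\in\Int\X_{0}$ lies on the common boundary of cells $\X_{1},\dots,\X_{m}$ and $(x,y)\notin\Gamma$ with $y\in\Y_{0}$. Since $\Gamma$ is closed, $(x,y_{i})\in\Gamma$ for each $i$, so $y\neq y_{i}$ for all $i$---your obstacle~(ii) is therefore trivial. The real difficulty is in~(i), and invoking positive aperture of the cells is not enough. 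Working with the equivalent cost $-\langle x,y\rangle$, the tangent cone of $\X_{j}$ at $x$ is $C_{j}=\{u:\langle u,y_{j}\rangle\ge\langle u,y_{\ell}\rangle,\;\ell\le m\}$, and applying Lemma~\ref{le:positiveTool} with $(x_{n},y_{j})\in\Gamma$, $x_{n}\in\Int\X_{j}$, requires some $u\in\Int C_{j}$ with $\langle u,y-y_{j}\rangle>0$. Together these force $\langle u,y\rangle>\max_{\ell}\langle u,y_{\ell}\rangle$; such a $u$ exists for some $j$ if and only if $y\notin\conv\{y_{1},\dots,y_{m}\}$. You have not ruled out $y\in\conv\{y_{1},\dots,y_{m}\}$, and the no-accumulation hypothesis on $\spt\nu$ gives only finiteness of the local cell set, not this separation.

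This missing step is precisely where $\mu\ll\cL^{d}$ must be used in a substantive way (beyond setting up the power diagram). The paper handles it dually: $\partial\psi(x)=\conv\{y_{1},\dots,y_{m}\}$, and if $y\in\Y_{0}$ were a nontrivial convex combination $\sum\theta_{i}y_{i}$, then monotonicity of $\partial(-\psi^{c})$ forces the Laguerre cell $\partial(-\psi^{c})(y)$ to lie in the affine subspace orthogonal to every $y-y_{i}$, hence to be $\cL^{d}$-null; but $\mu(\partial(-\psi^{c})(y))=\nu(\{y\})>0$ since $\spt\nu$ is discrete, a contradiction. Your primal approach can be completed by inserting exactly this argument to obtain the separation, but without it Lemma~\ref{le:positiveTool} cannot close the case.
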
 

\begin{proof}
  Again, we may state the proof with the equivalent cost $c(x,y)=-\br{x,y}/2$. Let $(x,y)\in\X_{0}\times \Y_{0}$. In view of Lemma~\ref{le:positiveAtBoundaryQuadraticDual}, it suffices to treat the case $x\in\Int \X_{0}$. Denote by $\dom\nabla\psi$ the set of points where $\psi$ is differentiable and assume that $I(x,y)=0$; that is, $y\in\partial_{c}\psi(x)=\partial\psi(x)$. The (ordinary) subdifferential $\partial\psi(x)$ equals $\{\nabla\psi(x)\}$ if $x\in\dom\nabla\psi$, whereas in general, it can be described (cf.\ \cite[Theorem~25.6, p.\,246]{Rockafellar.70})
 as the closed convex hull of
 \begin{align}\label{eq:extremePoints}
  S(x) =\Big\{ \lim_{n\to \infty} \nabla\psi(x_n): \, x_{n} \to x,\, x_{n}\in\dom\nabla\psi, \, \lim_{n\to \infty} \nabla\psi(x_n) \mbox{ exists} \Big\}.
\end{align}
  
\emph{Case~1: $x\in\dom\nabla\psi$.}  As $\Gamma\subset\partial\psi$ and $\partial\psi(x)$ is a singleton, it follows that $(x,y)=(x,\nabla\psi(x))\in\Gamma$.

\emph{Case~2: $y\in S(x)$.} Let $x_{n} \to x$ be as in~\eqref{eq:extremePoints}. Recalling that $x\in\Int \X_{0}$, we have $x_{n}\in \X_{0}$  for $n$ large. Thus $(x_{n},\nabla\psi(x_{n}))\in\Gamma$ by Case~1 and closedness entails that the limit $(x,y)$ pertains to~$\Gamma$ as well.

\emph{Case~3: $y\in \partial\psi(x)\setminus S(x)$.} We shall show that this case does not occur. As a first step, we argue that 
\begin{equation}\label{eq:polygon}
  \partial\psi(x) = \conv S(x)
\end{equation}
in the present context (without taking closure). As $x\in\Int \X_{0}\subset\Int \{\psi<\infty\}$, the subdifferential $\partial\psi(x)$ is bounded \cite[Theorem~23.4, p.\,217]{Rockafellar.70}. Let $U$ be a bounded neighborhood of $\partial\psi(x)$. The discreteness assumption on $\spt\nu$ entails that $U\cap\Y_{0}$ is a finite set (and that $\Y_{0}=\spt\nu$).
Let $x_{n} \to x$ be as in~\eqref{eq:extremePoints}. For $x_{n}$ close to $x$ we have $\nabla\psi(x_{n})\in U$, but also $\nabla\psi(x_{n})\in\Y_{0}$ by Case~1. As a result, the set $S(x)$ of limits is finite. In particular, its convex hull is already closed, and~\eqref{eq:polygon} follows.

Now let $y \in \partial\psi(x)\setminus S(x)$. By~\eqref{eq:polygon}, $y$ is a nontrivial convex combination $y=\sum_{i=1}^{k}\theta_{i}y_{i}$ for some distinct $  y_{i}\in S(x)$ and $\theta_{i}\in(0,1)$ with $\sum\theta_{i}=1$. Let $\phi:=-\psi^{c}$ (which is the Legendre--Fenchel transform of~$\psi$ in this context) and $x'\in \partial\phi(y)$. Then cyclical monotonicity of $\partial\phi$ implies $\br{x'-x,y-y_{i}}\geq0$ for all~$i$ and as 
\begin{equation}\label{eq:proofPositiveCyclMon}
  \sum_{i=1}^{k}\theta_i\br{x'- x, y- y_i} = \br{x'- x, 0} =0,
\end{equation}
it follows that $\br{x'- x, y- y_i}=0$ for all~$i$. That is, we have
$$
  \partial\phi(y) - \{x\} \; \perp \; y-y_{i} \qforallq 1\leq i\leq k,
$$
which implies in particular $\dim\partial\phi(y)<d$. On the other hand, $\nu(\{y\})>0$ by the discreteness of~$\Y_{0}$. Thus $\mu(\partial\phi(y))=\nu(\{y\})>0$, contradicting that $\mu\ll\cL^{d}$ does not charge lower dimensional sets. This shows that Case~3 does not occur and completes the proof.
\end{proof} 

The preceding arguments can be extended to a class of cost functions satisfying a Ma-Trudinger-Wang condition. This is detailed in Appendix~\ref{se:Loeper}.

\begin{proposition}\label{pr:positiveLoeper}
  After replacing convexity by $c$-convexity, Lemma~\ref{le:positiveAtBoundaryQuadraticDual} and Proposition~\ref{pr:positiveQuadraticSemidiscrete} extend to cost functions~$c$ satisfying Assumption~\ref{as:Loeper}.
\end{proposition}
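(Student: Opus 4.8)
The plan is to run both quadratic proofs of Section~\ref{se:positivity} essentially verbatim, reading them in the $c$-exponential charts supplied by the Ma--Trudinger--Wang/Loeper theory developed in Appendix~\ref{se:Loeper}. Recall that the twist condition makes $y\mapsto\nabla_{x}c(x,y)$ injective, so one can define the $c$-exponential map $c\text{-}\exp_{x}$ as its inverse, and likewise $c\text{-}\exp_{y}$ from the $y$-variable; Assumption~\ref{as:Loeper} is precisely the hypothesis under which, for any $c$-convex $\psi$, the set $\partial_{c}\psi(x)$ pulls back through $(c\text{-}\exp_{x})^{-1}$ to a \emph{convex} subset of the cotangent fibre, $c$-segments based at a fixed point pull back to straight segments, and the $c$-subdifferential of a $c$-convex function pulls back to the ordinary subdifferential of a genuinely convex function. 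Throughout we keep the standing hypotheses of the second half of Section~\ref{se:positivity} (in particular~\eqref{eq:IandPotentialsPos}, so that~\eqref{eq:InullAndcSubdiff} holds) and replace ``(strictly) convex'' by ``(strictly) $c$-convex'' in the sense of Appendix~\ref{se:Loeper}.

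For the extension of Lemma~\ref{le:positiveAtBoundaryQuadraticDual}: suppose $I(x,y)=0$ and pick $x'\in\X_{0}$ with $(x',y)\in\Gamma$. By~\eqref{eq:InullAndcSubdiff}, both $x$ and $x'$ lie in $\{I(\cdot,y)=0\}=\partial_{c}\phi(y)$, where $\phi=-\psi^{c}$. Under Assumption~\ref{as:Loeper} this set is $c$-convex, hence contains the $c$-segment based at $y$ joining $x$ to $x'$; this $c$-segment is a continuous curve through $x$, and by strict $c$-convexity of $\X_{0}$ its relative interior lies in $\Int\X_{0}$, so some $\tilde x\in\Int\X_{0}\cap B_{r}(x)$ satisfies $I(\tilde x,y)=0$, contradicting the hypothesis.

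For the extension of Proposition~\ref{pr:positiveQuadraticSemidiscrete}: Cases~1 and~2 of that proof carry over unchanged. At a point $x$ where $\psi$ is differentiable the twist condition forces $\partial_{c}\psi(x)$ to be the singleton $\{T(x)\}$ with $T(x):=c\text{-}\exp_{x}(\nabla\psi(x))$, so $(x,y)\in\Gamma$ whenever $x\in\Int\X_{0}$ and $y\in\partial_{c}\psi(x)$ (Case~1); and closedness of $\Gamma$ together with $x\in\Int\X_{0}$ handles the limits defining $S(x)$ (Case~2). For Case~3 one works in the chart $(c\text{-}\exp_{x})^{-1}$: there $\partial_{c}\psi(x)$ becomes a convex set whose extreme points are exactly the images of $S(x)$, which is finite by the discreteness of $\spt\nu$ as in the quadratic proof, so it is a polytope and $y$ corresponds to a nontrivial convex combination $\sum_{i}\theta_{i}y_{i}$ of distinct $y_{i}\in S(x)$. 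Passing now to a chart $(c\text{-}\exp_{y})^{-1}$ at $y$, the $c$-monotonicity of $\partial_{c}\phi$ becomes ordinary monotonicity, which for $x'\in\partial_{c}\phi(y)$ gives $\br{\tilde x'-\tilde x,\tilde y-\tilde y_{i}}\geq 0$ for all $i$; since the weighted sum vanishes by the convex-combination relation, every term is zero, so the chart image of $\partial_{c}\phi(y)$ lies in a proper affine subspace and $\dim\partial_{c}\phi(y)<d$. Because $c\text{-}\exp$ is a diffeomorphism it maps $\cL^{d}$-null sets to $\cL^{d}$-null sets, so $\mu(\partial_{c}\phi(y))=\nu(\{y\})>0$ still contradicts $\mu\ll\cL^{d}$; hence Case~3 does not occur and $\{I=0\}\cap(\X_{0}\times\Y_{0})=\Gamma$.

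The step I expect to be delicate is Case~3 above, where one must simultaneously use the chart at $x$ (to linearize $\partial_{c}\psi(x)$ and recognize $S(x)$ as its extreme points) and the chart at $y$ (to linearize $\partial_{c}\phi(y)$ and run the monotonicity computation), and must check that the ``$c$-convex combination'' identity obtained in the first chart produces the orthogonality relations needed in the second. This compatibility between the two linearizations is exactly what Assumption~\ref{as:Loeper} buys via Loeper's regularity theory; carefully verifying it — that is, that the incidence data among $x,x',y,y_{i}$ transport correctly between the charts — is the crux, and once it is in place the remaining bookkeeping is a routine transcription of the quadratic arguments.
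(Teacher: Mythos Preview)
Your Step~1 (extension of Lemma~\ref{le:positiveAtBoundaryQuadraticDual}) and Cases~1--2 of the extension of Proposition~\ref{pr:positiveQuadraticSemidiscrete} match the paper essentially verbatim. The divergence is in Case~3, and there your proposal has precisely the gap you yourself flag.

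Two concrete problems with the two-chart route. First, Assumption~\ref{as:Loeper} asserts that $c$-convex functions are locally \emph{semiconvex} and that $-\nabla_x c(x,\partial_c\psi(x))=\partial\psi(x)$; it does \emph{not} assert that any pullback becomes ``genuinely convex.'' Hence in the $y$-chart you only obtain the approximate monotonicity~\eqref{eq:generalizedCyclMon} with an $o(\cdot)$ error, not the exact inequality $\langle\tilde x'-\tilde x,\tilde y-\tilde y_i\rangle\geq0$ that your summation argument uses. Second, the convex-combination identity $\sum_i\theta_i(\tilde y-\tilde y_i)=0$ is obtained in the chart at~$x$, while the monotonicity you want to exploit lives in the chart at~$y$; these linearizations are related by a nonlinear change of variables, and Assumption~\ref{as:Loeper} does not supply the compatibility needed to transport the identity between them. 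You correctly name this as the crux but do not resolve it, and ``Loeper's theory handles it'' is not an argument.

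The paper avoids both issues by never passing to a second chart. Since $\phi=-\psi^{c}$ is semiconvex on~$\Omega'$, its \emph{ordinary} subdifferential $\partial\phi$ is defined and satisfies~\eqref{eq:generalizedCyclMon} directly in the ambient coordinates: for $x'\in\partial\phi(y)$ one gets $\langle x'-x,\,y-y_i\rangle\geq o(\|x'-x\|)$, and then~\eqref{eq:proofPositiveCyclMon} forces $\langle x'-x,\,y-y_i\rangle= o(\|x'-x\|)$ for each~$i$. The new ingredient compared to the quadratic case is the upgrade from $o(\|x'-x\|)$ to exact zero: the convex set $\partial\phi(y)$ contains the full segment $[x,x']$, so one may slide $x'$ toward~$x$ along that segment, keeping the unit direction fixed while $\|x'-x\|\to0$; this kills the error term and yields $\langle x'-x,\,y-y_i\rangle=0$ exactly, after which the quadratic argument concludes. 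Only one linearization (the ordinary subdifferential of a semiconvex function) is used, so the chart-compatibility problem never arises.
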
 

We conclude with a simple example illustrating the relevance of the twist condition. Here, $\nabla_{x}c(x,y)$ vanishes below the diagonal, so that the condition fails, and indeed the convergence $\pi_{\eps}\to\pi_{*}$ is sub-exponential in that region.

\begin{example}[No Twist]\label{ex:Ivanishes}
  Consider $\X=\Y=\R$ with identical marginals $\mu=\nu$ having support $[0,1]$ and the cost function
  $$
  c(x,y)=\begin{cases}
(y-x)^{2}, & y\geq x, \\
0, & y< x.
\end{cases} 
  $$
As $\nabla_{x}c(x,y)=0$ for all $y<x$, this cost does not satisfy the twist condition. Clearly there is a unique optimal transport $\pi_{*}\in\Pi(\mu,\nu)$, given by the Monge map $T(x)=x$. Its support is $\Gamma=\{(x,x): 0\leq x\leq 1\}$  and one can check by direct calculation based on~\eqref{eq:defI} that $I=c$
on $\X_{0}\times\Y_{0}=[0,1]^{2}$. Assumption~\ref{as:crossContinuity} is readily verified, hence Corollary~\ref{co:liminf2} shows that~$I$ is indeed the rate function in this context. We can obtain the same conclusion from Corollary~\ref{co:liminf}, at least if we also suppose that~$\mu$ is equivalent to the Lebesgue measure on $[0,1]$: then, Proposition~\ref{pr:dualUniqueness} shows that Assumption~\ref{as:uniquePotential} holds. Or as a third option, we may verify directly that~$I$ satisfies~\eqref{eq:Isum}, and then conclude as in the proof of Corollary~\ref{co:liminf}. In any event, we see that $I=0$ on~$\{y<x\}$, indicating sub-exponential decay of the weight of $\pi_{\eps}$.
\end{example}

\appendix

\section{Cyclical Invariance and Factorization}\label{se:factorization}

In this section we detail some classical facts about static Schr\"odinger bridges as well as the proof of Proposition~\ref{pr:cyclInv}.
Let $(\X,\mu)$ and $(\Y,\nu)$ be Polish probability spaces; as before, we denote by $\Pi(\mu,\nu)$ the set of couplings.

\begin{proposition}\label{pr:factorization}
Let $R$ be a probability measure on $\X\times\Y$ and suppose that 
\begin{equation}\label{eq:finiteEntropyCoupling}
 \mbox{there exists $\pi\in\Pi(\mu,\nu)$ with $H(\pi|R)<\infty$.}
\end{equation}
Then there is a unique minimizer $\pi^{*}\in\Pi(\mu,\nu)$ for 
$
  \inf_{\pi\in\Pi(\mu,\nu)} H(\pi|R).
$
Assume in addition that~$R\sim \mu\otimes\nu$. Then $\pi_{*}\sim\mu\otimes\nu$ and there exist measurable functions $Z: \X\times\Y\to (0,\infty)$, $f: \X\to (0,\infty)$, $g: \Y\to (0,\infty)$ such that 
\begin{equation}\label{eq:factorization}
  Z(x,y)=f(x)g(y), \quad (x,y)\in\X\times\Y
\end{equation}
and $Z$ is a version of the Radon--Nikodym density $d\pi^{*}/dR$. Conversely, if $\pi\in\Pi(\mu,\nu)$ and a version of its density has the form~\eqref{eq:factorization} on a set of full $\mu\otimes\nu$-measure, where $f$ and $g$ are arbitrary $[-\infty,\infty]$-valued functions, then $\pi=\pi^{*}$.

The uniqueness result also holds without Assumption~\eqref{eq:finiteEntropyCoupling}, if stated as follows. Let $\pi,\pi'\in\Pi(\mu,\nu)$ and $\pi,\pi',R\sim \mu\otimes\nu$. If versions of $d\pi/dR$ and $d\pi'/dR$ both admit factorizations as above, then $\pi=\pi'$.
\end{proposition}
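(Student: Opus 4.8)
The plan is to address the three assertions in order: existence and uniqueness of the minimizer, the multiplicative structure of the density together with the converse, and finally the unconditional uniqueness statement. For the first assertion, I would run the standard direct-method argument for relative entropy. Under~\eqref{eq:finiteEntropyCoupling} the infimum is finite; since $\Pi(\mu,\nu)$ is weakly compact (as recalled after Lemma~\ref{le:expBound}) and $\pi\mapsto H(\pi|R)$ is weakly lower semicontinuous with compact sublevel sets, a minimizer $\pi^{*}$ exists. Uniqueness follows from strict convexity of $H(\cdot|R)$ on the convex set $\Pi(\mu,\nu)$: if $\pi_{1},\pi_{2}$ are both minimizers, then $\tfrac12(\pi_1+\pi_2)\in\Pi(\mu,\nu)$ and strict convexity forces $\pi_1=\pi_2$ unless $\pi_1=\pi_2$ already $R$-a.e., which is the claim.

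For the second assertion, assume $R\sim\mu\otimes\nu$. I would first show $\pi^{*}\sim\mu\otimes\nu$, equivalently $\pi^{*}\sim R$: the density is $R$-a.e.\ positive because on the set $\{d\pi^{*}/dR=0\}$ one could perturb $\pi^{*}$ towards a coupling equivalent to $R$ (such as one built from $\mu\otimes\nu$) and strictly decrease the entropy, using that $t\log t$ has infinite right derivative at $0$; this is the usual support argument. To obtain the factorization~\eqref{eq:factorization}, I would appeal to the first-order optimality conditions for the entropy minimization over $\Pi(\mu,\nu)$: perturbing $\pi^{*}$ by $\pi^{*}+s(\pi'-\pi^{*})$ for competitors $\pi'$ with the same marginals, the Euler--Lagrange condition yields that $\log(d\pi^{*}/dR)$ is, $\pi^{*}$-a.e., a sum of a function of $x$ and a function of $y$ (the Lagrange multipliers for the two marginal constraints). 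Exponentiating gives $Z=fg$ with $f,g>0$ on a set of full measure; one then extends $f,g$ arbitrarily to get genuine functions on $\X$ and $\Y$. For the converse, suppose $\pi\in\Pi(\mu,\nu)$ has a density of the form $f(x)g(y)$. I would verify directly that $\pi$ satisfies the cyclical invariance condition~\eqref{eq:cyclInvR} (the telescoping product of $f(x_i)g(y_i)$ over a cycle equals that of $f(x_i)g(y_{i+1})$ since $\prod g(y_i)=\prod g(y_{i+1})$), hence $\pi$ is $(c,\eps)$-cyclically invariant; then Proposition~\ref{pr:cyclInv}(a) gives $\pi=\pi^{*}$. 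Alternatively, and perhaps more cleanly since here we work intrinsically with $R$ rather than $c,\eps$: the product form makes $H(\pi|R)-H(\pi'|R)$ computable for any competitor $\pi'\in\Pi(\mu,\nu)$ via $\int(\log f+\log g)\,d(\pi-\pi')=0$ (the marginals agree), and a convexity/Jensen estimate shows this difference is nonnegative, identifying $\pi$ as the minimizer; I would need to be careful about integrability of $\log f,\log g$ and handle the case where the naive integrals are ill-defined by a truncation argument.

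For the third assertion, the point is to drop~\eqref{eq:finiteEntropyCoupling} entirely and argue purely from the factorized structure. Given $\pi,\pi'\in\Pi(\mu,\nu)$ with $\pi,\pi',R\sim\mu\otimes\nu$ and densities $d\pi/dR=f_1 g_1$, $d\pi'/dR=f_2 g_2$ ($\mu\otimes\nu$-a.e.), consider the ratio $d\pi/d\pi' = (f_1/f_2)(x)\,(g_1/g_2)(y) =: a(x)b(y)$, well-defined and positive a.e. Since $\pi$ and $\pi'$ share the marginal $\mu$, integrating out $y$ gives $\int b(y)\,\pi'(x,dy)=1/a(x)$ for $\mu$-a.e.\ $x$; and sharing the marginal $\nu$ gives the symmetric relation. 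I would then show $a$ and $b$ must be a.e.\ constant: from $\int a(x)b(y)\,\pi'(dx,dy)=\pi(\X\times\Y)=1$ and $\int a(x)\,\mu(dx)\cdot(\text{something})$—more carefully, apply the constancy argument that is essentially the uniqueness part of Proposition~\ref{pr:cyclInv}(a), namely that two $R$-equivalent couplings with product-form density ratio and identical marginals must coincide. Concretely: $\log a(x)+\log b(y)=0$ $\pi'$-a.e., because if $\log a(x_0)+\log b(y_0)>0$ on a positive-measure product-like set one derives $\pi>\pi'$ there and $\pi<\pi'$ on a complementary set (using that both are probability measures with equal marginals), contradicting that the sign of $\log a+\log b$ is determined by a sum of one-variable functions along the "cyclical" combinatorics. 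This is exactly the mechanism behind Definition~\ref{de:cyclInv}, so I expect the cleanest route is to reduce to Proposition~\ref{pr:cyclInv}(a) by observing that any $R$-equivalent coupling with factorized density is $(c,\eps)$-cyclically invariant for the cost $c=-\eps\log(\alpha^{-1}dR/dP)$.

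The main obstacle I anticipate is the converse direction and the third assertion without any finiteness hypothesis: one cannot invoke the value function or finite entropy, so the argument must be genuinely combinatorial/geometric. The delicate points are (i) making the first-order optimality / Euler--Lagrange step rigorous when $\log(d\pi^{*}/dR)$ may not be integrable, which requires a truncation and a monotone-convergence or Fatou argument, and (ii) in the product-form converse, ruling out the possibility that $f$ or $g$ take the values $0$ or $\infty$ on a null set in a way that breaks the telescoping identity — here one uses that $\pi\sim R$ is part of the hypothesis (or is forced), so the a.e.-positivity is available and the cyclical-invariance identity~\eqref{eq:cyclInvR} holds pointwise a.e., letting Proposition~\ref{pr:cyclInv}(a) close the argument.
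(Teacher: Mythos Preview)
Your outline for existence, uniqueness, and the factorization of $d\pi^*/dR$ is the standard direct-method plus first-order-condition argument and is fine in spirit (the paper simply cites a reference for these parts). However, there are two issues.

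First, you invoke Proposition~\ref{pr:cyclInv}(a) to establish both the converse and the unconditional uniqueness. In the paper's logical order this is circular: Proposition~\ref{pr:cyclInv} is \emph{deduced from} Proposition~\ref{pr:factorization} (see the proof of Proposition~\ref{pr:cyclInv} in Appendix~\ref{se:factorization}, which ends with ``we conclude by Proposition~\ref{pr:factorization}''). So the route ``factorized $\Rightarrow$ cyclically invariant $\Rightarrow$ unique by~\ref{pr:cyclInv}(a)'' is not available here. Your alternative for the converse---computing $H(\pi|R)-H(\pi'|R)$ using that $\log f+\log g$ integrates the same against any coupling---is the right non-circular argument, and the integrability concern you flag is exactly the point that needs care.

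Second, for the unconditional uniqueness you correctly form $d\pi/d\pi'=a(x)b(y)$ but then attempt a direct ``$a,b$ must be constant'' argument. The paper's proof is much cleaner and avoids this: simply replace the reference measure~$R$ by~$\pi'$. Since $\pi'\in\Pi(\mu,\nu)$ and $H(\pi'|\pi')=0$, the finiteness hypothesis~\eqref{eq:finiteEntropyCoupling} is \emph{trivially} satisfied for this new reference, and the unique minimizer of $H(\cdot|\pi')$ over $\Pi(\mu,\nu)$ is $\pi'$ itself. Now $d\pi/d\pi'$ has product form, so the \emph{already-established} converse (applied with $\pi'$ in place of~$R$) yields $\pi=\pi'$. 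This bootstrap reduces the unconditional statement to the conditional one in two lines, bypassing the delicate direct analysis you sketch.
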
 

\begin{proof}
The result under~\eqref{eq:finiteEntropyCoupling} can be found in \cite[Theorem~2.1]{Nutz.20} in the stated form (where we do not assume a priori that one can choose $\pi\sim R$  in~\eqref{eq:finiteEntropyCoupling}).

For the final generalization on the uniqueness claim, let $\pi,\pi'$ be as stated and note that a version of the density $d\pi/d\pi'$ then admits a factorization. We consider $\pi'$ as an auxiliary reference measure, instead of~$R$. Then the analogue of~\eqref{eq:finiteEntropyCoupling} holds as $\pi'$ is itself a coupling and clearly $\pi'$ is the unique minimizer of $H(\cdot|\pi')$. We can now apply the above results.
\end{proof}

We mention that the existence and uniqueness of~$\pi_{*}$ are due to \cite{Csiszar.75}, and that the %
factorization of the density and its measurability are delicate in general (see \cite{BorweinLewis.92, BorweinLewisNussbaum.94, FollmerGantert.97, RuschendorfThomsen.97}, among others) but less so under our condition that $R\sim\mu\otimes\nu$.
An insightful approach with a direct construction of the factorization was recently proposed in \cite{BackhoffBeiglbockConforti.21}; it yields similar results for the entropic function $h(x)=x\log x$ considered here but also allows for a generalization to nonconvex penalties~$h$. In addition, it portrays what we called cyclical invariance as the cyclical monotonicity of an optimal transport problem arising from the linearization of the static Schr\"odinger bridge problem. Another recent work, \cite{BaradatLeonard.20}, uses Markovian methods to obtain a generalized factorization result for Schr\"odinger bridges with additional constraints.
  
\begin{proof}[Proof of Proposition~\ref{pr:cyclInv}]
  Recall the definition~\eqref{eq:defR} of~$R$ and note that $R\sim P=\mu\otimes\nu$. The entropic optimal transport problem~\eqref{eq:EOT} can we rewritten as
  $
    \inf_{\pi\in\Pi(\mu,\nu)} \eps H(\pi|R),
  $
  putting it in the realm of Proposition~\ref{pr:factorization}. Similarly, \eqref{eq:finitenessCond} is equivalent to~\eqref{eq:finiteEntropyCoupling}.
  Let $Z$ be as in~\eqref{eq:factorization}, then~\eqref{eq:cyclInvR} follows, and hence also~\eqref{eq:cyclInv}.
  
  Conversely, if $\pi\in\Pi(\mu,\nu)$ is cyclically invariant, then $\pi\sim P$ and ~\eqref{eq:cyclInvR} holds for its density~$Z$. Fix an arbitrary $x_{0}\in\X$ and note that $f(x):=Z(x,y)/Z(x_{0},y)$ is independent of~$y$ due to~\eqref{eq:cyclInvR} with $k=2$. Setting $g(y)=Z(x_{0},y)/f(x_{0})$ then yields the (measurable) factorization $Z(x,y)=f(x)g(y)$, and we conclude by Proposition~\ref{pr:factorization}. Alternately, the existence of a factorization can be deduced from~\eqref{eq:cyclInvR} by the general result of \cite[Theorem~3.3]{BorweinLewis.92}.
\end{proof} 

\begin{remark}\label{rk:cyclInvarianceFor2}
  The above proof shows that if the cyclical invariance condition~\eqref{eq:cyclInv} holds for $k=2$, then it already holds for arbitrary $k\geq2$.
\end{remark} 

\section{Uniqueness of Potentials}\label{se:uniquenessOfPotentials}

\begin{definition}\label{de:uniquenessOfPotentials}
  Let $\Gamma\subset \X\times\Y$ and $\Lambda\subset \X$. We say that \emph{uniqueness of potentials holds on~$\Lambda$} if for any $c$-convex functions $\psi_{1},\psi_{2}$ on~$\X$ with $\Gamma\subset\partial_{c}\psi_{i}$, it holds that $\psi_{1}-\psi_{2}$ is constant on~$\Lambda$.\end{definition} 

We detail two classes of optimal transport problems where uniqueness of potentials holds. Connectedness of at least one marginal support is essential---uniqueness fails even for the simplest discrete problem, $\mu=\nu=(\delta_{\{1\}}+\delta_{\{2\}})/2$ with cost $c(\{i\},\{j\})=\1_{i\neq j}$.

\begin{proposition}\label{pr:dualUniqueness}
  Let $\X=\R^{d}$ and $\mu\sim\cL^{d}$ on $\spt\mu$, where $\cL^{d}(\partial\spt\mu)=0$ and $\Int\spt\mu$ is connected. Let $\Gamma=\spt\pi$ where $\pi\in\Pi(\mu,\nu)$ is an optimal transport for the continuous cost function $c$.

  \vspace{.3em}\noindent (a) \emph{Lipschitz cost:} Suppose that
  \begin{center}
    $c(\cdot,y)$ is differentiable for all $y$, and locally Lipschitz uniformly in~$y$.
  \end{center}
  Then uniqueness of potentials holds on~$\spt\mu$, and in particular on $\proj_{\X}\Gamma$.
  
  \vspace{.5em}\noindent (b) \emph{Convex, superlinear cost:} Let $\Y=\R^{d}$ and $c(x,y)=h(y-x)$, where 
  \begin{enumerate}
  \item $h: \R^{d}\to\R$ is convex and differentiable,
  \item $h$ has superlinear growth: $h(x)/\|x\|\to\infty$ whenever $\|x\|\to\infty$,
  \item given $r<\infty$ and $\theta\in (0,\pi)$, and for $p\in\R^{d}$ sufficiently far from the origin, there is a cone of the form $\{x\in\R^{d}:\,\|x-p\|\,\|z\|\cos(\theta/2)\leq \br{z,x-p}\leq r\|z\|\}$ for some $z\in \R^{d}\setminus \{0\}$ on which $h$ assumes its maximum at $p$.
  \end{enumerate}  
  Then uniqueness of potentials holds on $\proj_{\X}\Gamma$.
\end{proposition}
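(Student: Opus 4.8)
The plan is to establish both parts via the envelope (first-order) condition satisfied by Kantorovich potentials, which converts the uniqueness statement into an almost-everywhere identity of gradients on the connected open set $\Int\spt\mu$, followed by a routine connectedness argument. \emph{The common mechanism:} let $\psi$ be $c$-convex with $\Gamma\subset\partial_{c}\psi$; then $\psi,\psi^{c}$ are finite on $\X_{0}=\proj_{\X}\Gamma$ and $\Y_{0}=\proj_{\Y}\Gamma$, and a short computation gives $\psi(x)=\sup_{y}[\psi^{c}(y)-c(x,y)]$. Hence for every $(x_{0},y_{0})\in\Gamma$ the map $x\mapsto\psi(x)+c(x,y_{0})$ is everywhere $\ge\psi^{c}(y_{0})$ and equals $\psi^{c}(y_{0})$ at $x_{0}$, i.e.\ attains a global minimum at $x_{0}$; if $\psi$ and $c(\cdot,y_{0})$ are moreover differentiable at $x_{0}$, the first-order condition yields
\[
  \nabla\psi(x_{0})=-\nabla_{x}c(x_{0},y_{0}),
\]
a quantity depending only on $(\Gamma,c)$. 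So for two potentials $\psi_{1},\psi_{2}$ and any $x_{0}\in\X_{0}$ at which both are differentiable one has $\nabla\psi_{1}(x_{0})=\nabla\psi_{2}(x_{0})$. Given this, the argument closes by: (i) noting $\X_{0}$ has full $\mu$-measure, hence is $\cL^{d}$-conull in $\spt\mu$ (since $\cL^{d}$ restricted to $\spt\mu$ is absolutely continuous w.r.t.\ $\mu$), so that with Rademacher's theorem applied to the locally Lipschitz $\psi_{i}$, $\cL^{d}$-a.e.\ point of $\Int\spt\mu$ is a common differentiability point lying in $\X_{0}$; (ii) deducing $\nabla(\psi_{1}-\psi_{2})=0$ $\cL^{d}$-a.e.\ on $\Int\spt\mu$, whence $\psi_{1}-\psi_{2}$ is constant on the connected open set $\Int\spt\mu$; (iii) extending to $\spt\mu\subset\overline{\Int\spt\mu}$ (the inclusion holding because $\mu\sim\cL^{d}$ on $\spt\mu$ and $\cL^{d}(\partial\spt\mu)=0$), hence to $\X_{0}\subset\spt\mu$.

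\emph{Part (a).} The only ingredient not supplied by the common mechanism is that $\psi$ is finite and locally Lipschitz. Fixing $x_{*}$ with $\psi(x_{*})<\infty$ and using $\psi(x)=\sup_{y}[\psi^{c}(y)-c(x,y)]$ together with $\psi^{c}(y)\le\psi(x_{*})+c(x_{*},y)$, one obtains $\psi(x)\le\psi(x_{*})+\sup_{y}[c(x_{*},y)-c(x,y)]$; the hypothesised uniform local Lipschitz bound $|c(x_{*},y)-c(x,y)|\le L_{K}|x_{*}-x|$ then gives finiteness and a local Lipschitz estimate for $\psi$ with the same constants, and $c(\cdot,y)$ is differentiable by hypothesis, so the common mechanism delivers uniqueness of potentials on $\spt\mu$ and in particular on $\X_{0}$.

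\emph{Part (b), and the main obstacle.} The structure is identical, but the translation-invariant convex cost $c(x,y)=h(y-x)$ is \emph{not} locally Lipschitz uniformly in $y$ (superlinearity of $h$ makes the local constants blow up as $|y|\to\infty$), so Part~(a) does not apply and the crux is to show that every Kantorovich potential $\psi$ is finite and locally Lipschitz on $\Int\spt\mu$. This is where hypotheses (2) and (3) are used: superlinearity of $h$ localises the supremum $\psi(x)=\sup_{y}[\psi^{c}(y)-h(y-x)]$ to a bounded set of $y$'s, locally uniformly in $x$, so a local Lipschitz estimate is again inherited from $h$; and condition (3)---a Gangbo--McCann-type substitute for strict convexity of $h$---is what controls the relevant preimage/sublevel sets enough to guarantee that $\psi$ is genuinely finite \emph{throughout} $\Int\spt\mu$ rather than merely $\cL^{d}$-a.e., which is exactly what the connectedness step requires. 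I expect this regularity statement---essentially the Gangbo--McCann analysis of non-strictly-convex costs---to be the technical heart of the proof. Granting it, differentiability $\cL^{d}$-a.e.\ on $\Int\spt\mu$, the identity $\nabla\psi(x_{0})=\nabla h(y_{0}-x_{0})$ on $\Gamma$, and the connectedness argument (with the passage to boundary points of $\X_{0}$ handled via the minimality property at $x_{0}$ and lower semicontinuity of the potentials) give uniqueness of potentials on $\X_{0}=\proj_{\X}\Gamma$.
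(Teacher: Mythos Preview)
Your proposal is correct and follows essentially the same route as the paper: the envelope identity $\nabla\psi(x_{0})=-\nabla_{x}c(x_{0},y_{0})$ on~$\Gamma$ (stated separately as Lemma~\ref{le:gradientUnique}), local Lipschitz regularity of~$\psi$ (direct in~(a), deferred to Gangbo--McCann \cite[Proposition~C.3 and Corollary~C.5]{GangboMcCann.96} in~(b), which give that $\dom\psi$ is essentially convex and $\psi$ locally Lipschitz on its interior), and then connectedness of $\Int\spt\mu$. The one place the paper is more explicit than your sketch is the boundary extension in~(b): it approximates any $(x,y)\in\Gamma$ by $(x_{n},y_{n})\in\Gamma$ with $x_{n}\in\Int\spt\mu$ (possible because such points carry full $\pi$-measure), and combines lower semicontinuity of $\psi$ and $-\psi^{c}$ with the identity $\psi^{c}-\psi=c$ on~$\Gamma$ and continuity of~$c$ to force $\psi(x)=\lim\psi(x_{n})$.
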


\begin{remark}\label{rk: }
  (a) If $c\in C^{1}(\R^{d}\times \R^{d'})$ and $\nu$ is compactly supported, we can always change $c$ outside a neighborhood of $\spt\mu \times\spt\nu$ to satisfy the condition of~(a), without affecting the set of optimal transports.
    
  (b) The convex cost with superlinear growth is essentially the well-known setting of Gangbo and McCann~\cite{GangboMcCann.96}; cf.\ their hypotheses (H2)--(H4). The technical condition~(iii) is implied by (ii) in the radial case $h(x)=\tilde{h}(\|x\|)$; in particular, all the conditions are satisfied for $c(x,y)=\|y-x\|^{p}$ with $p\in (1,\infty)$. In contrast to the main result of \cite{GangboMcCann.96}, $h$ is not assumed to be strictly convex---strictness is required for uniqueness of optimal transports, but not for uniqueness of potentials. For instance, the ``parabola with an affine piece,'' given by $h(x)=\tilde{h}(\|x\|)$ with $\tilde{h}(t)=t^{2}\1_{[0,1]}+ (2t-1)\1_{(1,2)}+(t^{2}-2t+3)\1_{[2,\infty)}$, satisfies all the assumptions in~(b). The affine piece will lead to non-uniqueness of optimal transports for a large class of marginals in the one-dimensional case.
  
  (c) Dual uniqueness may fail if $c$ is not differentiable. For $c(x,y)=|y-x|$ on $\R\times\R$, the $c$-convex functions are exactly the 1-Lipschitz functions. If $\mu=\nu$ is the Lebesgue measure on $[0,1]$, the identical transport $\pi$ is optimal and any 1-Lipschitz function $\psi$ satisfies $\Gamma=\{(x,x):\,x\in [0,1]\}\subset \partial_{c}\psi$. 
\end{remark}

The proof of the proposition is based on the following standard consideration (e.g., \cite[Lemma~3.1]{GangboMcCann.96}). %

\begin{lemma}\label{le:gradientUnique}
  Let $\Gamma\subset \X\times\Y$ and let $\psi,\phi$ be $\overline\R$-valued functions such that $\phi(y)-\psi(x)\leq c(x,y)$ on $\X\times\Y$ and
  $\phi(y)-\psi(x)=c(x,y)$ on $\Gamma$. If $\X=\R^{d}$ and $(x,y)\in\Gamma$ are such that $\psi$ and $c(\cdot,y)$ are differentiable at~$x$, then $\nabla\psi(x)=-\nabla_{x}c(x,y)$.
  In particular, if $c(\cdot,y)$ is differentiable for all $y\in\Y$, then $\nabla\psi(x)$ is uniquely determined for $x\in\proj_{\X}\Gamma\cap\dom\nabla\psi$.
\end{lemma}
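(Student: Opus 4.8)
The plan is to read the two assumed inequalities/equalities on $(\psi,\phi)$ as the statement that the function $x'\mapsto\psi(x')+c(x',y)$ attains its infimum at $x'=x$, and then to extract $\nabla\psi(x)$ from the first-order (Fermat) condition at that minimum. This is the standard argument behind such dual-uniqueness statements (cf.\ \cite[Lemma~3.1]{GangboMcCann.96}); the only thing to be careful about is the bookkeeping that makes differentiability meaningful.

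First I would fix $(x,y)\in\Gamma$ at which $\psi$ and $c(\cdot,y)$ are differentiable. Differentiability of $\psi$ at $x$ forces $\psi(x)\in\R$, with $\psi$ finite in a neighborhood of $x$, and then the equality $\phi(y)-\psi(x)=c(x,y)$ on $\Gamma$ gives $\phi(y)\in\R$. I would then introduce $g:\R^{d}\to(-\infty,\infty]$, $g(x'):=\psi(x')+c(x',y)$. The global inequality $\phi(y)-\psi(x')\le c(x',y)$ gives $g(x')\ge\phi(y)$ for all $x'$, while the equality on $\Gamma$ gives $g(x)=\phi(y)$; hence $x$ is a global minimizer of $g$, and $g$ is finite and differentiable at $x$ (as a sum of two functions differentiable there). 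Fermat's rule then yields $\nabla g(x)=0$, i.e.\ $\nabla\psi(x)+\nabla_{x}c(x,y)=0$, which is the claimed identity $\nabla\psi(x)=-\nabla_{x}c(x,y)$.

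For the ``in particular'' assertion I would observe that when $c(\cdot,y)$ is differentiable for every $y\in\Y$, any $x\in\proj_{\X}\Gamma\cap\dom\nabla\psi$ admits some $y$ with $(x,y)\in\Gamma$, so $\nabla\psi(x)=-\nabla_{x}c(x,y)$. Since the right-hand side involves neither $\psi$ nor $\phi$, this pins $\nabla\psi(x)$ down independently of the chosen pair $(\psi,\phi)$ satisfying the hypotheses (in particular, the same value for every Kantorovich potential with $\Gamma\subset\partial_{c}\psi$), and also independently of which $y$ with $(x,y)\in\Gamma$ is used. I do not expect any genuine obstacle here; the only delicate point is the finiteness remark in the previous paragraph, which legitimizes ``differentiable at $x$'' and the sum rule for $\nabla g$.
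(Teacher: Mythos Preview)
Your proposal is correct and takes essentially the same approach as the paper: both recognize that $x'\mapsto \psi(x')+c(x',y)$ attains its minimum at $x$ and extract $\nabla\psi(x)=-\nabla_{x}c(x,y)$ from the first-order condition there. The paper writes this out as an explicit Taylor expansion of $\psi(x+h)\ge\phi(y)-c(x+h,y)$ in $h$, while you invoke Fermat's rule for the auxiliary function $g$; the content is identical, and the paper also cites \cite[Lemma~3.1]{GangboMcCann.96} as the source of the argument.
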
 

\begin{proof}
  Let $(x,y)\in\Gamma$ be as stated. Then
  \begin{align*}
   \psi(x)+\nabla\psi(x)\cdot h + o(h)
    = \psi(x+h) 
   &\geq \phi(y)-c(x+h,y) \\
   &= \psi(x) +  c(x,y) - c(x+h,y) \\
   &= \psi(x) - \nabla_{x}c(x,y)\cdot h + o(h)
  \end{align*} 
  and hence $\nabla\psi(x)= -\nabla_{x}c(x,y)$ as the direction of $h$ is arbitrary.
\end{proof}

\begin{lemma}\label{le: }
   Let $\Gamma=\spt\pi$ for some $\pi\in\Pi(\mu,\nu)$. Then
   $
     \spt \mu = \overline{\proj_{\X}\Gamma}.
   $
\end{lemma}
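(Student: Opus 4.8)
The plan is to prove the set identity $\spt\mu = \overline{\proj_{\X}\Gamma}$ for $\Gamma = \spt\pi$ by showing the two inclusions separately. The measure-theoretic content is minimal: the key facts are that $\proj_{\X}$ is continuous, that $(\proj_{\X})_{\#}\pi = \mu$, and the general characterization of support of an image measure. First I would establish the inclusion $\overline{\proj_{\X}\Gamma} \subseteq \spt\mu$. For this, take $x_{0} \in \proj_{\X}\Gamma$, so $x_{0} = \proj_{\X}(x_{0},y_{0})$ for some $(x_{0},y_{0}) \in \spt\pi$. Given any open neighborhood $U \ni x_{0}$ in $\X$, the set $U \times \Y$ is an open neighborhood of $(x_{0},y_{0})$, hence $\pi(U\times\Y) > 0$ by definition of support; but $\pi(U\times\Y) = \mu(U)$, so $\mu(U) > 0$, which means $x_{0} \in \spt\mu$. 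Since $\spt\mu$ is closed, taking closures gives $\overline{\proj_{\X}\Gamma} \subseteq \spt\mu$.

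For the reverse inclusion $\spt\mu \subseteq \overline{\proj_{\X}\Gamma}$, I would argue by contradiction (or directly). Suppose $x_{0} \in \spt\mu$ but $x_{0} \notin \overline{\proj_{\X}\Gamma}$. Then there is an open neighborhood $U$ of $x_{0}$ with $U \cap \proj_{\X}\Gamma = \emptyset$, equivalently $U \cap \proj_{\X}(\spt\pi) = \emptyset$. I claim this forces $\mu(U) = 0$. Indeed, $U \cap \proj_{\X}(\spt\pi) = \emptyset$ means $(U \times \Y) \cap \spt\pi = \emptyset$, i.e.\ the open set $U \times \Y$ is disjoint from $\spt\pi$, so $\pi(U \times \Y) = 0$. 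Hence $\mu(U) = \pi(U\times\Y) = 0$, contradicting $x_{0} \in \spt\mu$. Therefore $\spt\mu \subseteq \overline{\proj_{\X}\Gamma}$, and combining the two inclusions yields the claimed equality.

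I do not anticipate a genuine obstacle here; this is essentially a bookkeeping lemma about supports and image measures under the continuous map $\proj_{\X}$. The one point requiring a modicum of care is the equivalence ``$(U\times\Y)\cap\spt\pi=\emptyset \iff \pi(U\times\Y)=0$'' in the Polish setting: the implication used (an open set disjoint from the support has zero measure) is immediate from the definition of support as the smallest closed set of full measure, since the complement of the support is open and null, and $U\times\Y$ being disjoint from $\spt\pi$ is contained in that complement. The other direction (used implicitly in the first inclusion) is likewise definitional. No separability or second-countability beyond what ``Polish'' already provides is needed, since we only test against the single open set $U\times\Y$ rather than a countable base.
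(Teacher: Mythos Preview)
Your proof is correct and follows essentially the same route as the paper's: both inclusions are obtained by relating $\mu(U)=\pi(U\times\Y)$ to the support of~$\pi$, with the paper using open balls where you use arbitrary open neighborhoods and phrasing the reverse inclusion contrapositively rather than by contradiction. No substantive difference.
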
 

\begin{proof}
Let $(x,y)\in\Gamma$, then $\mu(B_{r}(x))=\pi(B_{r}(x)\times\Y)>0$ for all $r>0$. This shows $\proj_{\X}\Gamma \subset \spt \mu$. Let $x\in \spt \mu$. As $\mu(B_{r}(x))>0$, there must be some $x'\in B_{r}(x)$ with $x'\in \proj_{\X}\Gamma$, and this holds for all $r>0$. Hence, $\spt \mu \subset \overline{\proj_{\X}\Gamma}$.
\end{proof}

\begin{proof}[Proof of Proposition~\ref{pr:dualUniqueness}.] We denote by $\dom \psi$ the set where a function $\psi$ is finite and by $\dom \nabla\psi$ the subset where it is differentiable.

  (a) Let $\psi$ be a $c$-convex function on~$\X=\R^{d}$ with $\Gamma\subset\partial_{c}\psi$. The local Lipschitz bound of $c(\cdot,y)$ implies the same bound for $\psi$. In particular, $\psi$ is continuous and $\cL^{d}$-a.e.\ differentiable  on~$\dom \psi=\R^{d}$.  The coupling property guarantees that $\proj_{\X}\Gamma\subset\spt\mu$ has full $\mu$-measure, hence also full $\cL^{d}$-measure. It follows that  $\Lambda:=\dom\nabla\psi\cap\proj_{\X}\Gamma\subset\spt\mu$ has full $\cL^{d}$-measure, and  $\nabla\psi$ is uniquely determined on~$\Lambda$ by Lemma~\ref{le:gradientUnique}. As $\psi$ is locally Lipschitz and $\Int\spt\mu$ is open and connected, this implies that $\psi$ is uniquely determined (up to constant) on $\Int\spt\mu$ (see, e.g., \cite[Formula~2]{Qi.89}). By continuity on $\R^{d}$, it is also determined on the closure, which equals $\spt\mu$ due to $\cL^{d}(\partial\spt\mu)=0$.

  (b) In this setting, the local Lipschitz property will only hold within $\Int\spt\mu$ and $\psi$ need not be continuous (or even finite) up to the boundary. As we require uniqueness at all (rather than almost all) points~$x$, we argue the boundary case in a second step.
  
  \emph{Step~1.} We first show that uniqueness of potentials holds on $\Int\spt\mu$. It is proved in \cite[Proposition~C.3 and Corollary~C.5]{GangboMcCann.96} that for any $c$-convex function $\psi$ there is a convex set $K$ with $\Int K \subset \dom\psi \subset K$ and that $\psi$ is locally Lipschitz (hence $\cL^{d}$-a.e.\ differentiable) within $\Int\dom\psi$. 
  By convexity, $\Int\overline{K}=\Int K =\Int\dom\psi$. %
  If $\Gamma\subset\partial_{c}\psi$, then $\proj_{\X}\Gamma\subset\dom\psi$ and hence
  $
     \spt \mu = \overline{\proj_{\X}\Gamma} \subset \overline{\dom\psi} \subset \overline{K},
  $
  showing that
  \begin{equation*}\label{eq:intSupportIntDomain}
   \Int \spt\mu \subset \Int\overline{K}=\Int\dom\psi.
  \end{equation*}
  It follows that $\psi$ is locally Lipschitz and $\cL^{d}$-a.e.\ differentiable on $\Int \spt\mu$. On the other hand, $\proj_{\X}\Gamma$ has full $\mu$-measure in $\Int \spt\mu$ by the coupling property, hence also full $\cL^{d}$-measure. Thus $\Lambda:=\dom\nabla\psi\cap \proj_{\X}\Gamma$ has full $\cL^{d}$-measure within $\Int\spt\mu$ and we conclude as in~(a).
  
\emph{Step~2.} 
Define $\X_{1}:=\proj_{\X}\Gamma \cap \Int\spt\mu$. Then
\begin{equation}\label{eq:interiorDenseGamma}
 \Gamma = \overline\Gamma_{1} \qwhereq \Gamma_{1}:=\{(x,y):\, x\in\X_{1},\,y\in\Gamma_{x}\},
\end{equation}
where $\Gamma_{x}$ denotes the section $\{y\in\Y:\,(x,y)\in\Gamma\}$. Indeed, $\mu(\X_{1})=1$ as stated in Step~1, which implies $\pi(\Gamma_{1})=1$ and hence $\Gamma \subset \overline\Gamma_{1}$ by the definition of $\Gamma=\spt\pi$. Conversely, $\Gamma_{1}\subset\Gamma$ is clear, and then $\overline\Gamma_{1}\subset\Gamma$ by closedness.

Fix $(x,y)\in \Gamma$. By~\eqref{eq:interiorDenseGamma} we can find $(x_{n},y_{n})\in\Gamma_{1}$ with $(x_{n},y_{n})\to(x,y)$ and in particular
$$
  \psi^{c}(y)-\psi(x) = c(x,y) = \lim c(x_{n},y_{n}) = \lim \, [\psi^{c}(y_{n})-\psi(x_{n})].
$$
The  $c$-convex functions $-\psi^{c}$ and $\psi$ are lower semicontinuous thanks to the continuity of~$c$, so that $\psi^{c}(y)\geq \limsup \psi^{c}(y_{n})$ and $-\psi(x)\geq \limsup -\psi(x_{n})$. Together, it follows that $\psi^{c}(y)= \lim \psi^{c}(y_{n})$ and $\psi(x)= \lim \psi(x_{n})$. As $x_{n}\in\Int\spt\mu$,  we know from Step~1 that $\psi(x_{n})$ is uniquely determined, and then so is $\psi(x)$.
\end{proof}

\section{Proof of Proposition~\ref{pr:positiveLoeper}}\label{se:Loeper}

In this section, we discuss how to extend Proposition~\ref{pr:positiveQuadraticSemidiscrete} to a general class of cost functions~$c$ satisfying the Ma-Trudinger-Wang condition ``(Aw)'' introduced in~\cite{MaTrudingerWang.05}; we use Loeper's equivalent geometric characterization~\cite{Loeper.09} to generalize from the quadratic case. We recall that the dual representation~\eqref{eq:IandPotentialsPos} of~$I$ has been assumed.

A number of terms from $c$-convex analysis are needed. For ease of reference, we (mostly) follow the notation of~\cite{Loeper.09}, whose Section~2 also provides an excellent introduction to the notions used below. Consider a $C^{1}$ function $c(x,y)$ on the product of two domains $\Omega, \Omega'\subset \R^d$ and suppose that $c$ satisfies the twist condition in both variables; i.e., $\nabla_{x}c(x,\cdot)$ and $\nabla_{y}c(\cdot,y)$ are injective. Given $x\in\Omega$, the $c$-\emph{exponential} map $\fT_{x}$ is defined by $\fT_{x}=-\nabla_x c(x,\cdot)^{-1}$. A $c$-\emph{segment wrt.~$x$} is the image of a segment (in the usual sense) under the map $\fT_{x}$. The $c$-\emph{segment of $y_1,y_2\in \Omega'$ wrt.~$x$} is the image of the segment joining $-\nabla_x c(x,y_1)$ and $-\nabla_x c(x,y_2)$ under~$\fT_{x}$. The set~$\Omega'$ is \emph{$c$-convex wrt.~$\Omega$} if the $c$-segment of $y_1,y_2$ wrt.~$x$  is contained in~$\Omega'$ for all $y_1, y_2\in \Omega'$ and $x\in \Omega$, or equivalently, if $-\nabla_x c(x,\Omega')$ is convex for $x\in \Omega$. \emph{Strict} $c$-convexity means that, in addition, the interior of the $c$-segment is in the interior of~$\Omega'$.  A proper function $\psi:\Omega \to \R\cup \{+\infty\}$ is \emph{$c$-convex} if if there exists $\zeta: \Omega'\to[-\infty,\infty]$ such that $\psi(x)=\sup_{y\in\Omega'} [\zeta(y)-c(x,y)]$. The \emph{$c$-transform} of $\psi$ is defined by $\psi^{c}(y) := \inf_{x\in \Omega} [c(x,y) + \psi(x)]$ for $y\in\Omega'$ and its \emph{$c$-subdifferential} at~$x$ is the set $\partial_{c}\psi(x)=\{y\in\Omega': \, \psi^{c}(y) -\psi(x)=c(x,y)\}$. 

The function $\psi$ is \emph{semiconvex} if it is the sum of a convex function and a function of class $C^{1,1}$. Its (ordinary) subdifferential $\partial \psi(x)$ at $x\in \Omega$ is
$$
\partial \psi(x):=\{y\in\R^{d}: \, \psi(x')\geq \psi(x)+\br{y,x'-x} +o(\|x-x'\|),\; x'\in \Omega \}.
$$ 
Clearly $\partial \psi(x)$ is convex. Moreover, it coincides with the subdifferential of convex analysis if $\psi$ is convex, and it satisfies an analogue of the cyclical monotonicity of convex analysis: adding up the defining inequalities shows
\begin{equation}\label{eq:generalizedCyclMon}
  \br{y-y',x-x'}\geq o(\|x-x'\|) \qforq y\in\partial \psi(x), \quad y'\in\partial \psi(x').
\end{equation}

We shall use analogous notation for functions on~$\Omega'$ instead of~$\Omega$ (a minor abuse of notation since~$c$ is then used with its variables exchanged).

\begin{assumption}\label{as:Loeper}
Let $\Omega,\Omega'$ be domains in $\R^d$ with $\X_0\subset \Omega$ and $\Y_0\subset \Omega'$, and let $c\in C^{1}$ satisfy the twist condition in both variables. Moreover, let $\X_{0}$ be  strictly $c$-convex wrt.~$\Y_{0}$ and let $\Omega'$ be $c$-convex wrt.~$\Omega$. Finally, we assume that any $c$-convex function $\psi$ on $\Omega$ is locally semiconvex and satisfies 
  \begin{equation}\label{eq:cSubdiffAndSubdiff}
  -\nabla_x c(x,\partial_{c} \psi (x)) = \partial \psi(x),
  \end{equation}
  and that the analogue holds for functions on~$\Omega'$.
\end{assumption}

The main condition is~\eqref{eq:cSubdiffAndSubdiff}. As $\partial \psi(x)$ is convex, it implies in particular that $\partial_{c}\psi(x)$ is $c$-convex. (The converse implications also holds; see~\cite{Loeper.09}. Note that our notation differs slightly from~\cite{Loeper.09}, where $\partial_{c} \psi (x)$ denotes what is $-\nabla_x c(x,\partial_{c} \psi (x))$ in our notation.)  It is shown in~\cite{Loeper.09} how~\eqref{eq:cSubdiffAndSubdiff} can be deduced from the~(Aw) condition when the the domains are bounded, sufficiently $c$-convex and $c\in C^{4}$. Local semiconvexity of $c$-convex functions can be ensured by comparably mild conditions on the data, see for instance~\cite[Proposition~2.2]{Loeper.09} or~\cite[Corollary~C.5]{GangboMcCann.96}.  
Apart from the quadratic cost, another classical example treated in~\cite{Loeper.09} is the reflector-antenna cost $c(x,y) = -\log\|x-y\|$. See also~\cite{Villani.09} for further background.

\begin{proof}[Proof of Proposition~\ref{pr:positiveLoeper}.] 
\emph{Step~1: Generalization of Lemma~\ref{le:positiveAtBoundaryQuadraticDual}.} This extension is straightforward: using the same notation as in the proof of Lemma~\ref{le:positiveAtBoundaryQuadraticDual}, we again have $x,x'\in\{I(\cdot,y)=0\} = \partial_{c} (-\psi^{c})(y)$. The latter set is $c$-convex by Assumption~\ref{as:Loeper}, hence contains the $c$-segment of $x,x'$ wrt.~$y$. The interior of the segment is contained in~$\Int\X_{0}$ by strict $c$-convexity, and it includes points from the neighborhood were~$I$ was assumed to be positive---a contradiction.

\emph{Step~2: Generalization of Proposition~\ref{pr:positiveQuadraticSemidiscrete}.}
  Let $(x,y)\in\X_{0}\times \Y_{0}$ be such that $I(x,y)=0$. In view of Step~1, it again suffices to treat the case $x\in\Int \X_{0}$. Moreover, as the $c$-convex function $\psi$ is semiconvex by our assumption, it still holds that $\partial\psi(x)$ is the closed convex hull of $S(x)$ as defined in~\eqref{eq:extremePoints}. The proofs for Case~1 and Case~2 carry over by simply replacing $\partial\psi(x)$ with $\partial_{c}\psi(x)$ and $\nabla\psi(x)$ with $\fT_{x}(\nabla\psi(x))$. In Case~3, the proof of~\eqref{eq:polygon} also carries over using semiconvexity. The arguments around~\eqref{eq:proofPositiveCyclMon} can be adapted as follows: Let $\phi:=-\psi^{c}$ and $x'\in \partial\phi(y)$. Then the cyclical monotonicity property~\eqref{eq:generalizedCyclMon} of $\partial\phi$ implies $\br{x'-x,y-y_{i}}\geq o(\|x'-x\|)$ for all~$i$. In view of~\eqref{eq:proofPositiveCyclMon}, it now follows that $\br{x'- x, y- y_i}=o(\|x'-x\|)$ for all~$i$, but noting that the convex set $\partial\phi(y)$ contains the segment~$[x',x]$, this already implies that $\br{x'- x, y- y_i}=0$ for all~$i$. The remainder of the proof is identical.
\end{proof}

\newcommand{\dummy}[1]{}

\end{document}